\newtheorem{theorem}{Theorem}[section]
\newtheorem{lemma}{Lemma}[section]
\newtheorem{definition}{Definition}[section]
\newtheorem{example}{Example}[section]
\newtheorem{proposition}{Proposition}[section]
\newtheorem{corollary}{Corollary}[section]
\newtheorem{remark}{Remark}[section]
\newtheorem{alemma}{Lemma}
\newenvironment{proof}{{\noindent \bf Proof:}}{\hfill$\Box$\medskip}
\definecolor{lred}{rgb}{1,0.8,0.8}
\definecolor{lblue}{rgb}{0.8,0.8,1}
\definecolor{dred}{rgb}{0.6,0,0}
\definecolor{dblue}{rgb}{0,0,0.5}
\definecolor{dgreen}{rgb}{0,0.5,0.5}
 \title{Second-order optimality conditions for SDCMPCC and application to rank optimization problems}
\author{Yulan Liu\footnote{School of Applied Mathematics, Guangdong University of Technology, Guangzhou.}\ \ {\rm and}\ \ Shaohua Pan\footnote{Corresponding author\,(shhpan@scut.edu.cn), School of Mathematics, South China University of Technology, Guangzhou.}}
 \date{}
\begin{document}

  \maketitle

  \begin{abstract}
   This paper is concerned with second-order optimality conditions for
   the mathematical program with semidefinite cone complementarity constraints (SDCMPCC).
   To achieve this goal, we first provide an exact characterization on
   the second-order tangent set to the semidefinite cone complementarity (SDCC) set
   in terms of the second-order directional derivative of the projection operator
   onto the SDCC set, and then use the second-order tangent set to the SDCC set
   to derive second-order necessary and sufficient conditions for SDCMPCC under
   suitable subregularity constraint qualifications. An application is also illustrated
   in characterizing the second-order necessary optimality condition
   for the semidefinite rank regularized problem.
  \end{abstract}

 \section{Introduction}\label{sec1}

  Let $\mathbb{X}$ and $\mathbb{Y}$ be finite dimensional vector spaces equipped with
  the inner product $\langle \cdot,\cdot\rangle$ and its induced norm $\|\cdot\|$.
  Let $\mathbb{S}^n$ denote the space of all $n\times n$ real symmetric matrices,
  equipped with the trace inner product $\langle X,Y\rangle ={\rm tr}(XY)$
  for $X,Y\in\mathbb{S}^n$ and its induced Frobenius norm $\|\cdot\|_F$,
  and let $\mathbb{S}_{+}^n$ and $\mathbb{S}_{-}^n$ denote the cone consisting
  of all semidefinite and negative semidefinite matrices of $\mathbb{S}^n$,
  respectively. Given twice differentiable functions $\varphi:\mathbb{X}\to \mathbb{R}$,
  $h\!:\mathbb{X}\to\mathbb{Y}$ and $\theta,\zeta:\mathbb{X}\to\mathbb{S}^n$,
  we are interested in the following SDCMPCC:
  \begin{equation}\label{SDCMPCC}
   \min_{x\in\mathbb{X}}\Big\{\varphi(x)\ \ {\rm s.t.}\ \
   h(x)\in K,\,\mathbb{S}^n_+\ni\theta(x)\perp \zeta(x)\in \mathbb{S}^n_-\Big\}
  \end{equation}
  where $K$ is a closed convex cone in $\mathbb{Y}$. Unless otherwise stated,
  $K$ is assumed to be second-order regular. As an extension of the mathematical
  program with the polyhedral-cone complementarity constraints \cite{Luo96}
  and the mathematical program with the second-order cone complementarity constraints
  \cite{Liang14,YeZhou16,YeZhou18,ZhangWZ15}, this problem has wide and important applications in a host of fields
  such as statistics, control and system identification, machine learning,
  combinatorial optimization, and so on since rank optimization problems and
  robust optimization problems can be reformulated as \eqref{SDCMPCC}
  (see the examples in \cite{BiPan17,DingSY14,LiuBiPan18}).

 \medskip

 Write $\Omega:=\big\{(X,Y)\in \mathbb{S}^{n}\times \mathbb{S}^{n}\,|\,
 \mathbb{S}^n_+\ni X \perp Y\in \mathbb{S}^n_-\big\}$ and
 $\Theta(x):=(\theta(x);\zeta(x))$ for $x\in\mathbb{X}$. Then,
 the problem \eqref{SDCMPCC} can be compactly written as follows
 \begin{equation}\label{ESDCMPCC}
  \min_{x\in\mathbb{X}}\Big\{\varphi(x)\ \ {\rm s.t.}\ \ (h(x);\Theta(x))\in K\times\Omega\Big\}.
 \end{equation}
 Owing to the SDC complementarity constraint $\Theta(x)\in\Omega$,
 this problem is notoriously difficult whether from an optimization theory
 or numerical algorithm standpoint. Although it can be reformulated as
 the following convex cone constrained optimization problem
 \begin{equation}\label{ConvexCone}
   \min_{x\in\mathbb{X}}\Big\{\varphi(x)\ \ {\rm s.t.}\ \
   (h(x);\Theta(x);\langle \theta(x),\zeta(x)\rangle)
   \in K\times(\mathbb{S}^n_+\times\mathbb{S}^n_-)\times\mathbb{R}_{+}\Big\},
 \end{equation}
 the common Robinson's constraint qualification (CQ) fails to hold
 at each feasible point (see \cite[Proposition 4.1]{DingSY14}). Inspired by this fact,
 Ding et al. \cite{DingSY14} studied the first-order necessary optimality
 conditions for the problem \eqref{SDCMPCC} by characterizing the limiting normal cone
 to the SDCC set $\Omega$ and introduced several kinds of important stationary points.
 Later, Wu et al. \cite{WuZZ14} provided another two classes of stationary points
 by characterizing the tangent cone to $\Omega$. Although there are some works
 on the second-order optimality condition for the mathematical program
 with the polyhedral conic complementarity constraints \cite{Gfrerer14,LuoYe13,Scheel00}, to the best of our knowledge,
 there are few works to focus on the second-order optimality conditions of \eqref{ESDCMPCC}
 except \cite{WuZZ14}, in which the authors proposed a second-order sufficient
 condition based on the equivalent conic optimization reformulation \eqref{ConvexCone}.

 \medskip

 Different from \cite{WuZZ14}, in this work we investigate the second-order
 optimality conditions starting from the problem \eqref{SDCMPCC} itself.
 Specifically, we first provide an exact characterization on the second-order
 tangent set to the SDCC set $\Omega$ by establishing its relation with
 the second-order (parabolically) directional derivative of the projection
 operator onto $\Omega$ and using the second-order directional derivative
 for symmetric matrix-valued functions \cite{ZhangZX13}; and then
 derive the second-order necessary and sufficient conditions for \eqref{SDCMPCC}
 in terms of the second-order tangent set to $\Omega$ under suitable
 metric subregularity CQs. It is known that the subregularity CQ is
 a very weak condition required for studying optimality theories of non-polyhedral
 conic optimization problems and some rules weaker than Robinson's CQ have been
 developed to identify whether the subregularity CQ holds or not
 in the past ten years (see, e.g., \cite{Gfrerer11,Gfrerer13,Henrion05,Ioffe08}).
 Although a no gap second-order necessary and sufficient optimality
 condition is also obtained under a stronger assumption, the gap-type second-order
 necessary optimality conditions are still weaker than the existing ones.

 \medskip

 It is worthwhile to point out that the second-order tangent sets to
 closed convex sets and convex conic constrained systems were well studied
 in the past decade (see, e.g., \cite{BonnansCS99,BonnansR05,BS00,Constantin06}),
 but there are few works for the second-order tangent set to
 a non-polyhedral conic complementarity constraint system. Recently,
 Chen and Ye \cite{ChenYe19} characterized the second-order tangent set
 to the second-order cone complementarity constraint system, but only
 derived a second-order necessary optimality condition under a constraint
 nondegeneracy assumption. This paper is partly motivated by their work.
 We not only characterize the second-order tangent set to the SDCC set,
 but also investigate the second-order necessary and sufficient optimality
 conditions under suitable metric subregularity CQs. Just when this work is finished,
 we learned that Gfrerer et al. \cite{Gfrerer19} under weaker conditions established
 second-order optimality conditions for nonconvex set-constrained optimization problems,
 which covers the problem \eqref{ESDCMPCC} as a special case. However, the second-order
 necessary optimality condition obtained there is weaker than ours (see Corollary \ref{SONC2}),
 and moreover, it seems much more difficult to characterize the lower generalized support function
 of the second-order tangent set to $\Omega$.
  \section{Notation and preliminaries}\label{sec1}

  In this paper, we denote $I,E$ and $e$ by an identity matrix,
  a matrix of all ones and a vector of all ones, respectively,
  whose dimensions are known from the context. For a vector $z$,
  ${\rm Diag}(z)$ means the diagonal matrix with the $i$-th diagonal
  entry being $z_i$; and for given $Z_i\in\mathbb{R}^{m_i\times m_i}$
  for $i=1,\ldots,p$, ${\rm Diag}(Z_1,\cdots, Z_p)$ means the block diagonal
  matrix with the $i$-th diagonal block being $Z_i$. The notation
  $\mathbb{O}^{m\times n}$ denotes the set of $m\times n$ matrices with
  orthonormal columns and $\mathbb{O}^{m\times m}$ means $\mathbb{O}^{m}$.
  For a given $Z\in\mathbb{S}^n$, write $\Lambda(Z):={\rm Diag}(\lambda(Z))$
  and $\mathbb{O}^{n}(Z):=\{P\in\mathbb{O}^{n}\,|\, Z=P\Lambda(Z)P^{\mathbb{T}}\}$,
  where $\lambda(Z)\in\mathbb{R}^n$ means the eigenvalue vector arranged in
  a nonincreasing order; for given index sets $\alpha\subseteq\{1,\ldots,n\}$
  and $\beta\subseteq\{1,\ldots,n\}$, $Z_{\alpha\beta}$ means the submatrix consists
  of those entries $Z_{ij}$ with $i\in\alpha$ and $j\in\beta$, and $Z_{\alpha}$
  means the matrix consisting of those columns $Z_j$ with $j\in\alpha$.
  For a closed set $S\subseteq\mathbb{X}$, $\delta(\cdot\,|\,S)$ and
  $\sigma(\cdot\,|\,S)$ denote the indicator function and the support function
  of the set $S$, respectively. For a closed cone $C$, $C^{\circ}$ means
  the negative polar of $C$, and if $C$ is convex, ${\rm lin}(C)$ means the largest
  subspace contained in $C$.

  \medskip

  For any given $t_1,t_2\in\mathbb{R}$, if a scalar function
  $g\!:\mathbb{R}\to\mathbb{R}$ is differentiable at $t_1$ and $t_2$,
  the notation $g^{[1]}(t_1,t_2)$ represents the first divided difference
  of $g$ at $(t_1,t_2)$, defined by
  \[
   g^{[1]}(t_1,t_2):=\left\{\begin{array}{cl}
                   \frac{g(t_1)-g(t_2)}{t_1-t_2} & {\rm if}\ t_1\ne t_2, \\
                          g'(t_1) & {\rm otherwise}.
                   \end{array}\right.
  \]
  If $g$ is differentiable at each component of a vector
  $\lambda=(\lambda_1,\ldots,\lambda_n)^{\mathbb{T}}$,
  $g^{[1]}({\rm Diag}(\lambda))$ denotes an $n\times n$ symmetric matrix
  whose $(i,j)$-th entry is $g^{[1]}(\lambda_i,\lambda_j)$.
  For any given $t_1,t_2,t_3\in\mathbb{R}$,
  if the scalar function $g$ is twice differentiable at each $t_i$,
  $g^{[2]}(t_1,t_2,t_3)$ denotes the second divided difference
  of $g$ at $(t_1,t_2,t_3)$ defined as follows: if $t_1,t_2,t_3$ are
  distinct,
  \[
    g^{[2]}(t_1,t_2,t_3):=\frac{g^{[1]}(t_1,t_2)-g^{[1]}(t_1,t_3)}{t_2-t_3};
  \]
  and for other values of $t_1,t_2,t_3$, $g^{[2]}(t_1,t_3,t_3)$ is defined by continuity, e.g.,
  \[
   g^{[2]}(t_1,t_3,t_3):=\frac{g^{[1]}(t_1,t_3)-g'(t_3)}{t_1-t_3}.
  \]
  For a function $f\!:\mathbb{R}\to\mathbb{R}$,
  the associated L\"{o}wner operator $\mathscr{F}\!:\mathbb{S}^n\to\mathbb{S}^n$
  is defined as
  \[
    \mathscr{F}(X)=P{\rm Diag}(f(\lambda_1(X)),\ldots,f(\lambda_n(X)))P^{\mathbb{T}}
    \quad{\rm for}\ X=P{\rm Diag}(\lambda(X))P^{\mathbb{T}}.
  \]

  First of all, we recall from \cite{BS00,RW98} three classes of tangent cones
  to a closed set $S\subseteq\mathbb{X}$ and the outer and inner
  second-order tangent sets to the closed set $S$.
  \begin{definition}\label{Def-SOTS}
   Let $S\subset\mathbb{X}$ be a closed set. Consider an $\overline{x}\in S$
   and a direction $d\in\mathbb{X}$. The tangent cone, inner tangent cone
   and Clarke tangent cone to $S$ at $\overline{x}$ are defined as
   \begin{subequations}
    \begin{align*}
    \mathcal{T}_{S}(\overline{x}):=\big\{d\in\mathbb{X}\ |\ \exists\, t_k\downarrow 0,\,
                   d^k\to d\ {\rm with}\ \overline{x}+t_kd^k\in S\big\},\qquad\\
     \mathcal{T}^{i}_{S}(\overline{x}):=\big\{d\in\mathbb{X}\ |\ \forall\, t_k\downarrow 0,\, \exists \,
      d^k\to d\ {\rm with}\ \overline{x}+t_kd^k\in S\big\},\qquad\\
    \mathcal{T}^{c}_{S}(\overline{x}):=\big\{d\in\mathbb{X}\ |\ \forall\, t_k\downarrow 0,\, x^k\xrightarrow[S]{}\overline{x},\ \exists \, d^k\to d\ {\rm with}\ x_k+t_kd^k\in S\big\}.
    \end{align*}
   \end{subequations}
   The outer and inner second-order tangent sets to $S$ at $\overline{x}$ in direction $d$
   are defined as
   \begin{subequations}
    \begin{align*}
    \mathcal{T}^{2}_{S}(\overline{x};d)
     :=\big\{w\in\mathbb{X}\ |\ \exists\, t_k\downarrow 0\ {\rm such\ that}\ {\rm dist}\big(\overline{x}+t_kd+\frac{1}{2}t_k^2 w,S\big)=o(t_k^2)\big\},\\
    \mathcal{T}^{i,2}_{S}(\overline{x};d):=\big\{w\in\mathbb{X}\ |\ {\rm dist}\big(\overline{x}+td+\frac{1}{2}t^2 w,S\big)=o(t^2)\ {\rm for\ any}\ t\ge 0\big\}.\quad
   \end{align*}
   \end{subequations}
  \end{definition}

  The tangent cones $\mathcal{T}^{c}_{S}(\overline{x}),\mathcal{T}^{i}_{S}(\overline{x})$
  and $\mathcal{T}_{S}(\overline{x})$ are all closed and satisfy
  $\mathcal{T}^{c}_{S}(\overline{x})\subset\mathcal{T}^{i}_{S}(\overline{x})
  \subset\mathcal{T}_{S}(\overline{x})$, and $\mathcal{T}^{c}_{S}(\overline{x})$ is convex.
  It is clear that $\mathcal{T}^{i,2}_{S}(\overline{x};h)
  \subset \mathcal{T}^{2}_{S}(\overline{x};h)$. Notice that
  $\mathcal{T}^{i,2}_{S}(\overline{x};h)$ and $\mathcal{T}^{2}_{S}(\overline{x};h)$
  are generally not a cone, and they can be nonempty only if $h\in \mathcal{T}_{S}(\overline{x})$.
  \begin{definition}\label{NormDef}
   (see \cite{Mordu06,RW98}) Let $S\subset\mathbb{X}$ be a given set.
   Consider an arbitrary $\overline{x}\in S$ and a direction $d\in\mathbb{X}$.
   The regular/Fr\'{e}chet normal cone to $S$ at $x$ is defined by
   \[
     \widehat{\mathcal{N}}_{S}(\overline{x})
     :=\big\{v\in\mathbb{X}\ |\ \langle v,x'-\overline{x}\rangle\le o(\|x'-\overline{x}\|)\quad\forall x'\in S\big\};
    \]
   the limiting/Mordukhovich normal cone to $S$ at $\overline{x}$ is defined as
   \[
     \mathcal{N}_{S}(\overline{x}):=\limsup_{x'\xrightarrow[S]{}\overline{x}}\widehat{\mathcal{N}}_{S}(x')
     =\Big\{v\in \mathbb{X}\,|\, \exists\, x^k\xrightarrow[S]{}\overline{x},v^k\to v {\ \ \rm with \ \ } v^k\in \widehat{\mathcal{N}}_{S}(x^k)\Big\};
    \]
   and the Clark normal cone to $S$ at $\overline{x}$ is defined as
   $\mathcal{N}^c_{S}(\overline{x}):={\rm cl co}\mathcal{N}_{S}(\overline{x})$;
   while the limiting normal cone to the set $S$ in the direction $d$ at $\overline{x}$ is defined by
   \[
     \mathcal{N}_{S}(\overline{x};d)\!:=\limsup_{t\downarrow 0,d'\to d}\widehat{\mathcal{N}}_{S}(x+td')
     =\!\Big\{v\in \mathbb{X}\ |\ \exists t_k\downarrow 0,d^k\to d,v^k\to v\  {\rm with}\
     v^k\!\in\!\widehat{\mathcal{N}}_{S}(x+t_kd^k)\Big\}.
    \]
  \end{definition}

  The directional limiting normal cone was introduced in \cite{Ginchev11}.
  By Definition \ref{NormDef}, clearly, $\mathcal{N}_S(\overline{x};d)=\emptyset$
  if $d\notin\mathcal{T}_S(x)$, $\mathcal{N}_S(\overline{x};d)\subseteq\mathcal{N}_S(\overline{x})$
  and $\mathcal{N}_S(\overline{x};0)=\mathcal{N}_S(\overline{x})$.
  By \cite[Theorem 6.28]{RW98}, the above tangent cones and normal cones
  have the following polar relations
  \[
    \widehat{\mathcal{N}}_S(\overline{x})=[\mathcal{T}_{S}(\overline{x})]^{\circ},\,
    \mathcal{T}^{c}_{S}(\overline{x})=[\mathcal{N}_S(\overline{x})]^{\circ}
    \ {\rm and}\
    \mathcal{N}_S^c(\overline{x})={\rm cl}({\rm co}\mathcal{N}_S(\overline{x}))
    =[\mathcal{T}^{c}_{S}(\overline{x})]^{\circ}.
  \]

  The following definition recalls the metric subregularity of a multifunction
  in a certain direction introduced in \cite{Gfrerer13}, which is weaker than
  its metric subregularity.
 \begin{definition}
  (see \cite[Definition 1]{Gfrerer13}) Let $\Upsilon\!:\mathbb{X}\to\mathbb{Y}$ be
  a mapping and $S\subseteq\mathbb{Y}$ be a closed set. We say that
  the multifunction $\mathcal{F}(z)\!:=\Upsilon(z)-S$ is subregular at
  $\overline{x}$ for the origin in direction $d\in\mathbb{X}$ if there exist
  $\kappa>0,\rho>0$ and $\delta>0$ such that for all $x\in\overline{x}+V_{\rho,\delta}$,
  \[
    {\rm dist}(x,\mathcal{F}^{-1}(0))\le\kappa{\rm dist}(\Upsilon(x),S)
  \]
  where $V_{\rho,\delta}:=\{w\in\rho\mathbb{B}\ |\ \|\|d\|w-\|w\|d\|\le\delta\|w\|\|d\|\}$
  is a directional neighborhood of the direction $d$. When $d=0$, we say that
  $\mathcal{F}$ is metrically subregular at $\overline{x}$ for the origin.
 \end{definition}

  Let $\Upsilon\!:\mathbb{X}\to\mathbb{Y}$ be a twice differentiable mapping and
  $S\subseteq\mathbb{Y}$ be a closed set. We next pay our attention to the set
  $\Sigma\subseteq\mathbb{X}$ that can be represented locally at
  a fixed $\overline{x}\in\mathbb{X}$ as
  \begin{equation}\label{nconvex-system}
   \Sigma\cap\mathcal{O}=\big\{x\in\mathcal{O}\ |\ \Upsilon(x)\in S\big\}
  \end{equation}
  where $\mathcal{O}$ is a neighborhood of $\overline{x}$.
  For such a system, the following results hold.
 \begin{lemma}\label{MSCQ-result}
  Let $\Sigma$ be the constrained system represented by \eqref{nconvex-system}
  around $\overline{x}\in\Sigma$. Suppose the mapping
  $\mathcal{F}(z)\!:=\Upsilon(z)-S$ is subregular at $\overline{x}$
  for the origin in direction $d\in\mathbb{X}$. Then
  \begin{itemize}
   \item[(i)] $d\in\mathcal{T}_{\Sigma}(\overline{x})$ if and only if $
                    \Upsilon'(\overline{x})d\in\mathcal{T}_{S}(\Upsilon(\overline{x}))$;

   \item[(ii)] if $d\in\mathcal{T}_{\Sigma}(\overline{x})$, then
               $w\in\mathcal{T}_{\Sigma}^2(\overline{x},d)$ iff
                $\Upsilon'(\overline{x})w+\nabla^2\Upsilon(\overline{x})(d,d)
                \in\mathcal{T}_{S}^2(\Upsilon(\overline{x}),\Upsilon'(\overline{x})d)$;

   \item[(iii)] if $\Upsilon'(\overline{x})d \in\mathcal{T}_{S}(\Upsilon(\overline{x}))$,
                then there exists $\kappa>0$ such that
                \(
                  \mathcal{S}_d(p)\subseteq\mathcal{S}_d(0)+\kappa\|p\|\mathbb{B}_{\mathbb{X}},
                \)
                where $\mathcal{S}_{\xi}(p):=\{u\in\mathbb{X}\ |\ \Upsilon'(\overline{x})u
                +\nabla^2\Upsilon(\overline{x})(\xi,\xi)+p             \in\mathcal{T}_{S}^2(\Upsilon(\overline{x}),\Upsilon'(\overline{x})\xi)\big\}$ for $p\in\mathbb{Y}$.
  \end{itemize}
  \end{lemma}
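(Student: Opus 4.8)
The plan is to establish the three statements of Lemma \ref{MSCQ-result} by exploiting the directional metric subregularity of $\mathcal{F}(z)=\Upsilon(z)-S$, which allows us to convert constraint-qualification-free estimates for the image set $S$ into estimates for the preimage system $\Sigma$. The governing principle throughout is a distance-transfer argument: for points $x$ near $\overline{x}$ lying in the directional neighborhood $V_{\rho,\delta}$, the inequality ${\rm dist}(x,\Sigma)\le\kappa\,{\rm dist}(\Upsilon(x),S)$ lets us pass from a perturbation of $\Upsilon(\overline{x})$ inside $S$ back to a nearby feasible point of $\Sigma$, while a Taylor expansion of $\Upsilon$ controls how first- and second-order directional data transform under the map.

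For part (i), the ``only if'' direction is routine: if $d\in\mathcal{T}_{\Sigma}(\overline{x})$, pick $t_k\downarrow 0$ and $d^k\to d$ with $\overline{x}+t_kd^k\in\Sigma$, so $\Upsilon(\overline{x}+t_kd^k)\in S$; expanding $\Upsilon(\overline{x}+t_kd^k)=\Upsilon(\overline{x})+t_k\Upsilon'(\overline{x})d^k+o(t_k)$ and reading off the difference quotient shows $\Upsilon'(\overline{x})d\in\mathcal{T}_{S}(\Upsilon(\overline{x}))$ without any CQ. For the ``if'' direction I would use subregularity: supposing $\Upsilon'(\overline{x})d\in\mathcal{T}_{S}(\Upsilon(\overline{x}))$, I construct points $x_k=\overline{x}+t_kd$ which lie in $V_{\rho,\delta}$ for large $k$, estimate ${\rm dist}(\Upsilon(x_k),S)=o(t_k)$ from the tangency, and then invoke ${\rm dist}(x_k,\Sigma)\le\kappa\,{\rm dist}(\Upsilon(x_k),S)=o(t_k)$ to produce feasible points realizing $d$ as a tangent direction. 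Part (ii) follows the same template but one order higher: the ``only if'' direction expands $\Upsilon(\overline{x}+t_kd+\tfrac12 t_k^2 w)$ to second order via $\nabla^2\Upsilon(\overline{x})(d,d)$ and matches the second-order difference quotient against the definition of $\mathcal{T}_{S}^2$, while the ``if'' direction again transfers the $o(t^2)$ distance estimate from $S$ to $\Sigma$ through the subregularity inequality, taking care that the curve $t\mapsto\overline{x}+td+\tfrac12 t^2 w$ stays within the directional neighborhood of $d$.

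Part (iii) is the Lipschitz-type stability of the linearized second-order feasible sets $\mathcal{S}_d(p)$ with respect to the perturbation $p$. Here I would recognize that $\mathcal{S}_d(p)$ is the solution set of the generalized equation $\Upsilon'(\overline{x})u+\nabla^2\Upsilon(\overline{x})(d,d)+p\in\mathcal{T}_{S}^2(\Upsilon(\overline{x}),\Upsilon'(\overline{x})d)$, and that the target second-order tangent set is convex (being a second-order tangent set to a closed set, and in the relevant reduction a convex set). Since $u\mapsto\Upsilon'(\overline{x})u$ is a fixed linear map, the desired inclusion $\mathcal{S}_d(p)\subseteq\mathcal{S}_d(0)+\kappa\|p\|\mathbb{B}_{\mathbb{X}}$ is precisely a metric-subregularity/Hoffman-type bound for this linear generalized equation: given $u\in\mathcal{S}_d(p)$, one shifts the right-hand side by $-p$ and measures the distance to $\mathcal{S}_d(0)$. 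The uniform constant $\kappa$ comes from the fact that the underlying multifunction $u\mapsto\Upsilon'(\overline{x})u-\mathcal{T}_{S}^2(\Upsilon(\overline{x}),\Upsilon'(\overline{x})d)$ is metrically subregular at any of its solution points, which is guaranteed because the assumed directional subregularity of $\mathcal{F}$ propagates to the linearized level.

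The main obstacle I anticipate is in the ``if'' directions of (i) and (ii), specifically ensuring that the approximating sequences constructed from the subregularity estimate actually lie in the directional neighborhood $V_{\rho,\delta}$, since directional subregularity only delivers its distance bound there; one must verify that points of the form $\overline{x}+t_kd+\tfrac12 t_k^2 w$ satisfy $\|\,\|d\|w'-\|w'\|d\,\|\le\delta\|w'\|\|d\|$ for the displacement $w'=t_kd+\tfrac12 t_k^2 w$, which holds because $w'$ is asymptotically aligned with $d$. A secondary subtlety in (iii) is that the Lipschitz constant $\kappa$ must be independent of $p$, which forces the argument to rely on a genuine metric-subregularity property of the linearized system rather than a mere pointwise solvability, and this is where the full strength of the directional subregularity assumption of $\mathcal{F}$ is used.
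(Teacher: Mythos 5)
Your parts (i) and (ii) are sound and coincide with the standard route: the paper itself does not reprove them but cites \cite[Proposition 4.1]{Gfrerer162} and \cite[Proposition 5]{Gfrerer19}, whose proofs follow exactly your distance-transfer template, and you correctly flag the one delicate point, namely that the displacements $t_kd$ and $t_kd+\frac{1}{2}t_k^2w$ eventually lie in the directional neighborhood $V_{\rho,\delta}$, so the directional subregularity estimate is actually applicable along the approximating parabolic sequences.

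Part (iii), however, contains a genuine gap. Your argument rests on two claims that fail: first, that $\mathcal{T}^2_{S}(\Upsilon(\overline{x}),\Upsilon'(\overline{x})d)$ is convex --- outer second-order tangent sets need not be convex even for convex $S$, and in the paper's application $S=K\times\Omega$ with $\Omega$ the nonconvex SDCC set, so no reduction to a convex target is available; second, that a Hoffman-type bound holds for the generalized equation $u\mapsto\Upsilon'(\overline{x})u+\nabla^2\Upsilon(\overline{x})(d,d)-\mathcal{T}^2_{S}(\cdot)$ --- Hoffman bounds require polyhedrality, metric subregularity of a linear map minus a general closed set can fail, and in any case (iii) demands one constant $\kappa$ uniform over all $p$ and all $u\in\mathcal{S}_d(p)$, i.e.\ a global error bound, which pointwise subregularity at each solution point would not deliver. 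Your phrase that directional subregularity of $\mathcal{F}$ ``propagates to the linearized level'' is precisely the statement needing proof, and the actual mechanism (the argument of \cite[Theorem 4.3]{Mohammadi19}, which the paper invokes) runs the other way, through the \emph{nonlinear} system: given $u\in\mathcal{S}_d(p)$, the point $s:=\Upsilon'(\overline{x})u+\nabla^2\Upsilon(\overline{x})(d,d)+p$ belongs to $\mathcal{T}^2_{S}(\Upsilon(\overline{x}),\Upsilon'(\overline{x})d)$, so there exists $t_k\downarrow 0$ with ${\rm dist}\big(\Upsilon(\overline{x})+t_k\Upsilon'(\overline{x})d+\frac{1}{2}t_k^2s,\,S\big)=o(t_k^2)$; setting $x^k:=\overline{x}+t_kd+\frac{1}{2}t_k^2u$, a Taylor expansion gives ${\rm dist}(\Upsilon(x^k),S)\le\frac{1}{2}t_k^2\|p\|+o(t_k^2)$, the directional subregularity produces $z^k\in\Sigma$ of the form $z^k=\overline{x}+t_kd+\frac{1}{2}t_k^2u^k$ with $\|u^k-u\|\le\kappa\|p\|+o(1)$, and extracting a convergent subsequence $u^k\to\widetilde{u}$ yields $\widetilde{u}\in\mathcal{T}^2_{\Sigma}(\overline{x},d)=\mathcal{S}_d(0)$ by part (ii), whence ${\rm dist}(u,\mathcal{S}_d(0))\le\kappa\|p\|$. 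This construction --- parabolic sequence, subregularity estimate, limit of second-order difference quotients, and an appeal to part (ii) --- is the missing core of (iii); without it your proof does not close.
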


  The proof of part (i) and (ii) can be found in \cite[Proposition 4.1]{Gfrerer162}
  and \cite[Proposition 5]{Gfrerer19}, and part (iii) can be achieved by
  using the same arguments as those in \cite[Theorem 4.3]{Mohammadi19}.
  Notice that part (i) and part (ii)-(iii) were established
  under the subregularity of $\mathcal{F}$ at $(\overline{x},0)$
  in \cite[Proposition 1]{Henrion05}
  and \cite[Theorem 4.3 \& Theorem 4.5]{Mohammadi19}, respectively.

  \medskip

  By invoking Lemma \ref{MSCQ-result}, we have the following conclusion,
  which generalizes the result of \cite[Proposition 3.88]{BS00} to
  the nonconvex system \eqref{nconvex-system} under the metric subregularity.
  \begin{proposition}\label{SO-regular}
   Let $S\subseteq\mathbb{Y}$ be a closed set and $\Upsilon\!:\mathbb{X}\to\mathbb{Y}$
   be a twice differentiable mapping. Consider an arbitrary $\overline{x}\in\Upsilon^{-1}(S)$
   and a direction $d\in\mathbb{X}$. Suppose that $\mathcal{F}(z)\!:=\Upsilon(z)-S$
   is subregular at $\overline{x}$ for the origin in the direction $d$ and $S$ is outer
   second-order regular (second-order regular) at $\Upsilon(\overline{x})$ in the direction
   $\Upsilon'(\overline{x})d\in\mathcal{T}_{S}(\Upsilon(\overline{x}))$.
   Then $\Upsilon^{-1}(S)$ is outer second-order regular (second-order regular)
   at $\overline{x}$ in the direction $d$.
  \end{proposition}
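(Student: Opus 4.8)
The plan is to transfer the (outer) second-order regularity of $S$ at $\overline{y}:=\Upsilon(\overline{x})$ in the direction $h:=\Upsilon'(\overline{x})d$ to the preimage $\Sigma:=\Upsilon^{-1}(S)$ at $\overline{x}$ in the direction $d$, using Lemma \ref{MSCQ-result} as the bridge. Note first that, since $h\in\mathcal{T}_{S}(\overline{y})$ and $\mathcal{F}$ is subregular at $\overline{x}$ for the origin in direction $d$, part (i) of Lemma \ref{MSCQ-result} yields $d\in\mathcal{T}_{\Sigma}(\overline{x})$, so the second-order tangent sets in the direction $d$ are the right objects. To establish the outer second-order regularity of $\Sigma$, I would take an arbitrary sequence $x_n\in\Sigma$ of the canonical form $x_n=\overline{x}+t_nd+\frac{1}{2}t_n^2w_n$ with $t_n\downarrow 0$ and $t_nw_n\to 0$, and aim to prove ${\rm dist}(w_n,\mathcal{T}_{\Sigma}^2(\overline{x},d))\to 0$.

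The second step is a second-order Taylor expansion of $\Upsilon$ along this sequence. Writing $x_n-\overline{x}=t_nd+\frac{1}{2}t_n^2w_n$ and using $t_nw_n\to 0$ to discard the cross term $t_n^3\nabla^2\Upsilon(\overline{x})(d,w_n)$, the quartic term, and the remainder $o(\|x_n-\overline{x}\|^2)=o(t_n^2)$, I expect to obtain
\[
  \Upsilon(x_n)=\overline{y}+t_nh+\tfrac{1}{2}t_n^2z_n+o(t_n^2),\qquad z_n:=\Upsilon'(\overline{x})w_n+\nabla^2\Upsilon(\overline{x})(d,d),
\]
with $t_nz_n\to 0$. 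Absorbing the $o(t_n^2)$ term into the quadratic coefficient, $\Upsilon(x_n)=\overline{y}+t_nh+\frac{1}{2}t_n^2r_n$ with $r_n=z_n+o(1)$ and $t_nr_n\to 0$. Because $x_n\in\Sigma$ forces $\Upsilon(x_n)\in S$, the outer second-order regularity of $S$ at $\overline{y}$ in direction $h$ gives ${\rm dist}(r_n,\mathcal{T}_{S}^2(\overline{y},h))\to 0$, and hence ${\rm dist}(z_n,\mathcal{T}_{S}^2(\overline{y},h))\to 0$.

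The decisive step is to pull this estimate back to the domain through parts (ii)--(iii) of Lemma \ref{MSCQ-result}. Choosing $\widehat{z}_n\in\mathcal{T}_{S}^2(\overline{y},h)$ with $p_n:=\widehat{z}_n-z_n\to 0$, the membership $\widehat{z}_n\in\mathcal{T}_{S}^2(\overline{y},h)$ reads exactly as $w_n\in\mathcal{S}_d(p_n)$ in the notation of part (iii). The set-valued Lipschitz estimate $\mathcal{S}_d(p_n)\subseteq\mathcal{S}_d(0)+\kappa\|p_n\|\mathbb{B}_{\mathbb{X}}$ then places $w_n$ within distance $\kappa\|p_n\|$ of $\mathcal{S}_d(0)$, while part (ii) identifies $\mathcal{S}_d(0)=\mathcal{T}_{\Sigma}^2(\overline{x},d)$. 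Consequently ${\rm dist}(w_n,\mathcal{T}_{\Sigma}^2(\overline{x},d))\le\kappa\|p_n\|\to 0$, which is the desired conclusion for the outer case.

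For the parenthetical second-order regular case one only relaxes $x_n\in\Sigma$ to ${\rm dist}(x_n,\Sigma)=o(t_n^2)$; projecting $x_n$ onto $\Sigma$ and using the local Lipschitz continuity of the $C^1$ map $\Upsilon$ shows ${\rm dist}(\Upsilon(x_n),S)=o(t_n^2)$, after which the very same expansion and pull-back, now invoking the (full) second-order regularity of $S$, go through verbatim. I expect the \emph{main obstacle} to be the decisive pull-back step: it is exactly here that subregularity is indispensable, since without the Lipschitz-type stability of the linearized second-order system in part (iii) there is no way to convert a vanishing perturbation $p_n$ of the image-space inclusion into a vanishing distance of $w_n$ to $\mathcal{T}_{\Sigma}^2(\overline{x},d)$. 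The Taylor bookkeeping is routine but must be done carefully, as it relies on $t_nw_n\to 0$ to guarantee that all terms beyond $z_n$ are genuinely $o(t_n^2)$.
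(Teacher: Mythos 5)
Your argument for the outer case is correct and is, step for step, the paper's own proof: Lemma \ref{MSCQ-result}(i) to get $d\in\mathcal{T}_{\Upsilon^{-1}(S)}(\overline{x})$; the second-order Taylor expansion producing $z_n=\Upsilon'(\overline{x})w_n+\nabla^2\Upsilon(\overline{x})(d,d)$, with the cross term $t_n^3\nabla^2\Upsilon(\overline{x})(d,w_n)$ and the quartic term absorbed into $o(t_n^2)$ precisely because $t_nw_n\to 0$; the outer second-order regularity of $S$ to produce perturbations $p_n\to 0$ with $z_n+p_n\in\mathcal{T}^2_S(\Upsilon(\overline{x}),\Upsilon'(\overline{x})d)$ (the paper's $\eta_k$); and the pull-back $w_n\in\mathcal{S}_d(p_n)\subseteq\mathcal{S}_d(0)+\kappa\|p_n\|\mathbb{B}_{\mathbb{X}}$ from part (iii), combined with the identification $\mathcal{S}_d(0)=\mathcal{T}^2_{\Upsilon^{-1}(S)}(\overline{x},d)$ from part (ii). There is nothing to correct in this part.

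The parenthetical (full second-order regular) case is where your proposal has a genuine gap. Relaxing $x_n\in\Sigma$ to ${\rm dist}(x_n,\Sigma)=o(t_n^2)$ is not what separates second-order regularity from outer second-order regularity; that relaxation is in fact vacuous, since projecting $x_n$ onto $\Sigma$ only replaces $w_n$ by $w_n+o(1)$ and reduces the statement back to the outer case, so your ``verbatim'' rerun proves nothing beyond outer regularity. In the Bonnans--Shapiro sense used here, second-order regularity in a direction is outer second-order regularity together with the coincidence $\mathcal{T}^{i,2}_{\Sigma}(\overline{x};d)=\mathcal{T}^{2}_{\Sigma}(\overline{x};d)$ (equivalently, the distance condition holds with the \emph{inner} second-order tangent set), and the pull-back machinery of Lemma \ref{MSCQ-result}(ii)--(iii) speaks only of outer second-order tangent sets. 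The missing piece is the inclusion $\mathcal{S}_d(0)\subseteq\mathcal{T}^{i,2}_{\Sigma}(\overline{x};d)$: for $w\in\mathcal{S}_d(0)$ one has $\Upsilon'(\overline{x})w+\nabla^2\Upsilon(\overline{x})(d,d)\in\mathcal{T}^{2}_S=\mathcal{T}^{i,2}_S$ by the second-order regularity of $S$, so by Taylor expansion ${\rm dist}\big(\Upsilon(\overline{x}+td+\frac{1}{2}t^2w),S\big)=o(t^2)$ for all $t\downarrow 0$; since the points $\overline{x}+td+\frac{1}{2}t^2w$ eventually lie in the directional neighborhood $\overline{x}+V_{\rho,\delta}$ of $d$, the directional subregularity then gives ${\rm dist}\big(\overline{x}+td+\frac{1}{2}t^2w,\Sigma\big)\le\kappa\,{\rm dist}\big(\Upsilon(\overline{x}+td+\frac{1}{2}t^2w),S\big)=o(t^2)$, i.e.\ $w\in\mathcal{T}^{i,2}_{\Sigma}(\overline{x};d)$, which together with the outer case yields full second-order regularity. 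To be fair, the paper itself is equally terse here (its closing sentence even misstates the conclusion as ``outer'' regularity), but your proposed mechanism, as written, would fail to deliver the stronger conclusion.
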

  \begin{proof}
   By Lemma \ref{MSCQ-result}(i), $d\in\mathcal{T}_{\Upsilon^{-1}(S)}(\overline{x})$.
   Let $x^k\!:=\overline{x}+t_kd+\frac{1}{2}t_k^2w^k\in \Upsilon^{-1}(S)$
   be an arbitrary sequence with $t_k\downarrow 0$ and $t_kw^k\to 0$.
   A second-order expansion of $\Upsilon$ at $x^k$ is
   \[
     S\ni \Upsilon(x^k)=\Upsilon(\overline{x})+t_k\Upsilon'(\overline{x})d
     +\frac{1}{2}t_k^2\big[\Upsilon'(\overline{x})w^k+\nabla^2\Upsilon(\overline{x})(d,d)\big]+o(t_k^2).
   \]
   Since $S$ is outer second-order regular at $\Upsilon(\overline{x})$
   in $\Upsilon'(\overline{x})d$, there exists $\eta_k\to 0$ such that
   \begin{equation}\label{ineq1-prop88}
     \Upsilon'(\overline{x})w^k+\nabla^2\Upsilon(\overline{x})(d,d)+\eta_k\in \mathcal{T}_S^2(\Upsilon(\overline{x}),\Upsilon'(\overline{x})d).
   \end{equation}
   Let $\mathcal{S}_d\!:\mathbb{Y}\to\mathbb{X}$ be the multifunction
   defined as in Lemma \ref{MSCQ-result}(iii). By Lemma \ref{MSCQ-result}(iii),
   there exists $c>0$ such that for any $p\in\mathbb{R}^n$,
   \[
     \mathcal{S}_d(p)\subset\mathcal{S}_d(0)+c\|p\|\mathbb{B}_{\mathbb{X}}.
   \]
   From equation \eqref{ineq1-prop88} we know that $w^k\in\mathcal{S}_d(\eta_k)$ for each $k$.
   Along with the last inclusion, there exists $\widetilde{w}^k\in\mathcal{S}_d(0)$ such that
   $\|w^k-\widetilde{w}^k\|\le c\eta_k$ for each $k$. That is, for each $k$,
   there exists $\xi^k$ with $\|\xi^k\|\le c\eta_k$ such that
   $\widetilde{w}^k=w^k+\xi^k$. Together with $\widetilde{w}^k\in\mathcal{S}_d(0)$, we have
   \[
     \Upsilon'(\overline{x})(w^k+\xi^k)+\nabla^2\Upsilon(\overline{x})(d,d)
     \in\mathcal{T}_S^2(\Upsilon(\overline{x}),\Upsilon'(\overline{x})d).
   \]
   This, by Lemma \ref{MSCQ-result}(ii), implies that
   $w^k+\xi^k\in\mathcal{T}_{\Upsilon^{-1}(S)}^{2}(\overline{x},d)$, and consequently,
   \[
     {\rm dist}(w^k,\mathcal{T}_{\Upsilon^{-1}(S)}^{2}(\overline{x},d))
     \le \|\xi^k\|\to 0.
   \]
   Hence, $\Upsilon^{-1}(S)$ is outer second-order regular at $\overline{x}$
   in the direction $d$. If $S$ is second-order regular at $\Upsilon'(\overline{x})$
   in the direction $\Upsilon'(\overline{x})d$, the above arguments show that
   $\Upsilon^{-1}(S)$ is outer second-order regular at $\overline{x}$ in the direction $d$.
   The proof is completed.
  \end{proof}

  \section{Second-order directional derivative of $\Pi_{\mathbb{S}^n_+}$}\label{sec3}

  Let $\Pi_{\mathbb{S}^n_+}(\cdot)$ denote the projection operator onto $\mathbb{S}_{+}^n$.
  Notice that $\Pi_{\mathbb{S}^n_+}(\cdot)$ is the L\"{o}wner operator associated
  to the function $t\mapsto\max(0,t)$. In this part, we use \cite[Theorem 4.1]{ZhangZX13}
  to obtain the formula for calculating the second-order directional derivative
  of $\Pi_{\mathbb{S}^n_+}(\cdot)$. In order to state \cite[Theorem 4.1]{ZhangZX13},
  we need to introduce some necessary notation.

  \medskip

  For any given $Z\in\mathbb{S}^n$, let the matrix $Z$ have
  the eigenvalue decomposition as follows
  \begin{subnumcases}{}
   \label{Zdecom}
   Z=\big[P_{\alpha}\ \ P_{\beta}\ \ P_{\gamma}\big]
      \left[\begin{matrix}
           {\rm Diag}(\lambda_{\alpha}(Z)) & 0 & 0\\
            0 & 0 & 0\\
            0 & 0 &  {\rm Diag}(\lambda_{\gamma}(Z))
      \end{matrix}\right]\big[P_{\alpha}\ \ P_{\beta}\ \ P_{\gamma}\big]^{\mathbb{T}},\\
   \label{idx-eigZ}
   \alpha:=\big\{i\,|\, \lambda_i(Z)>0\big\},\
   \beta:=\big\{i\,|\, \lambda_i(Z)=0\big\}\ \ {\rm and}\ \
   \gamma:=\big\{i\,|\, \lambda_i(Z)<0\big\}.
   \end{subnumcases}
  Let $\mu_1>\mu_2>\cdots>\mu_r$ be the distinct eigenvalues of $Z$ and
  assume that $\mu_{r_0}=0$ for some $r_0\in\{1,2,\ldots,r\}$.
  For each $k\in\{1,2,\ldots,r\}$, define the index set
  \begin{equation}\label{akDef}
   a_{k}:=\big\{i\in\{1,2,\ldots,n\}\ |\ \lambda_i(Z)=\mu_k\big\}.
  \end{equation}
  Clearly, $\alpha=\bigcup_{i=1}^{r_0-1}a_i,\beta=a_{r_0}$ and $\gamma=\bigcup_{i=r_0+1}^{r}a_i$.
  The matrix $P$ can be partitioned as
  \[
    P=\big[P_{a_1}\ P_{a_2}\ \cdots\ P_{a_r}\big]\ \ {\rm with}\ P_{a_k}\in \mathbb{O}^{n\times |a_k|}
    \ \ {\rm for}\ \ k=1,\ldots,r.
  \]
  Let $f\!:\mathbb{R}\to\mathbb{R}$ be a function that is second-order
  directionally differentiable at each $\mu_k$.
  Fix an arbitrary $k\in\{1,\ldots,r\}$. There exists $\delta_k>0$ such that
  for all $l\neq k$, $|\mu_l-\mu_k|>\delta_k$. With such $\delta_k$,
  we define the continuous function $g_k\!:\mathbb{R}\to\mathbb{R}$ and
  $g\!:\mathbb{R}\to\mathbb{R}$ by
  \[
    g_k(t):=\left\{\begin{array}{cl}
    \!-\frac{6}{\delta_k}(t-\mu_k-\frac{\delta_k}{2})&{\rm if}\ t\in (\mu_k+\frac{\delta_k}{3},\mu_k+\frac{\delta_k}{2}],\\
                                1 & {\rm if}\ t\in [\mu_k-\frac{\delta_k}{3},\mu_k+\frac{\delta_k}{3}], \\
   \!\frac{6}{\delta_k}(t-\mu_k+\frac{\delta_k}{2})&{\rm if}\ t\in [\mu_k-\frac{\delta_k}{2},\mu_k-\frac{\delta_k}{3}),\\
   0 & {\rm otherwise}
   \end{array}\right.{\rm and}\ \
   g(t):=\sum_{k=1}^{r} g_k(t)f(\mu_k).
  \]
  For each $k,l\in\{1,2,\ldots,r\}$, let $G^{[2]}_{kl}\!:\mathbb{S}^n\to\mathbb{S}^n$
  be the L\"{o}wner operator associated to the function $g^{[2]}(\mu_k,\cdot,\mu_l)$.
  Obviously, $G^{[2]}_{kl}(Z)=\sum_{j=1}^{r} g^{[2]}(\mu_k,\mu_j,\mu_l)P_{a_j}P_{a_j}^{\mathbb{T}}$.

  \medskip

  For each $H\in\!\mathbb{S}^n$, we write $\widetilde{H}:=P^{\mathbb{T}}HP$.
  Fix an arbitrary $k\in\{1,2,\ldots,r\}$. Let the matrix $\widetilde{H}_{a_ka_k}\in\mathbb{S}^{|a_k|}$
  have the following eigenvalue decomposition:
  \begin{subnumcases}{}\label{Hkdecom}
  \!\qquad\widetilde{H}_{a_ka_k}=\big[Q_{\alpha^k}^k\ \ Q_{\beta^k}^k\ \ Q_{\gamma^k}^k\big]
      {\rm Diag}(\lambda(\widetilde{H}_{a_ka_k}))\big[Q_{\alpha^k}^k\ \ Q_{\beta^k}^k\ \ Q_{\gamma^k}^k\big]^{\mathbb{T}},
      \qquad\qquad\quad\\
   \label{alpk-gamk}
   \!\alpha^k\!:=\!\{j\ |\ \lambda_j(\widetilde{H}_{a_ka_k})>0\},
   \beta^k\!:=\!\{j\ |\ \lambda_j(\widetilde{H}_{a_ka_k})=0\},
   \gamma^k\!:=\!\{j\ |\ \lambda_j(\widetilde{H}_{a_ka_k})<0\}.\qquad
  \end{subnumcases}
  Let $\eta_1^k>\eta_2^k>\cdots>\eta^k_{N_k}$ be the distinct eigenvalues
  of $\widetilde{H}_{a_ka_k}$ and assume that $\eta_{N_k^0}=0$ for some
  $N_k^0\in\{1,2,\ldots,N_k\}$. For each $j=1,2,\ldots,N_k$, define the index set
  \begin{equation}
   b^k_j\!:=\!\big\{i\in\!\{1,2,\ldots,|a_k|\}\ |\ \lambda_i(\widetilde{H}_{a_ka_k})=\eta_j^k\big\}.
  \end{equation}
  Clearly, $\alpha^k=\bigcup_{j=1}^{N_k^0-1}b^k_j,\,\beta^k=b^k_{N_k^0}$
  and $\gamma^k=\bigcup_{j=N_k^0+1}^{N_k}b^k_j$. The $Q^k$ can be partitioned as
  \[
    Q^k=\big[Q_{b_1^k}^k\ \ Q_{b_2^k}^k\ \ \cdots\ \ Q_{b_{N_k}^k}^k\big]
    \ \ {\rm with}\ Q_{b_j^k}^k\in \mathbb{O}^{|a_k|\times |b_j^k|}
    \ \ {\rm for}\ \ j=1,\ldots,N_k.
  \]
  Fix an arbitrary $j\in\{1,\ldots,N_{k}\}$. Similarly, there exists $\delta_{j}^k>0$
  such that for all $l\ne j$, $|\eta^k_j-\eta^k_l|>\delta^k_{j}$.
  With such $\delta_j^k$, we define the following continuous functions
   \[
   \phi^k_{j}(t)\!:=\left\{\begin{array}{cl}
   \!-\frac{6}{\delta^k_{j}}(t\!-\!\eta^k_j\!-\!\frac{\delta^k_j}{2})
   & {\rm if\;} t\in (\eta^k_j\!+\!\frac{\delta^k_j}{3},\eta^k_j\!+\!\frac{\delta^k_j}{2}],\\
                                1
   & {\rm if\;} t\in [\eta^k_j\!-\!\frac{\delta^k_j}{3},\eta^k_j\!+\!\frac{\delta^k_j}{3}], \\
   \!\frac{6}{\delta^k_{j}}(t\!-\!\eta^k_j\!+\!\frac{\delta^k_j}{2})
   &{\rm if\;} t\in [\eta^k_j\!-\!\frac{\delta^k_j}{2},\eta^k_j\!-\!\frac{\delta^k_j}{3}),\\
   0 &  {\rm otherwise}
   \end{array}\right.{\rm and}\
   \phi_k(t)\!:=\!\sum_{j=1}^{N_k} f'(\mu_k;\eta^k_j)\phi^k_{j}(t).
  \]
  Clearly, for each $j\in\{1,2,\ldots,N_k\}$, $\phi_k(\eta^k_j)=f'(\mu_k;\eta^k_j)$.
  Then, it is immediate to have
  \begin{equation*}
  \!\sum_{j=1}^{N_k} f'(\mu_k;\eta^k_j)Q^k_{b^k_j}(Q^k_{b^k_j})^\mathbb{T}
  \!=Q^k\!\left[\!\begin{matrix}\phi_k(\eta^k_1)I_{|b^k_1|} & \\
            &\ddots & \\ & & \phi_k(\eta^k_{N_k})I_{|b^k_{N_k}|}\end{matrix}\!\right]\!
            (Q^k)^\mathbb{T}\!:=\Phi_{k}(\widetilde{H}_{a_ka_k}).
 \end{equation*}
 \vspace{-0.3cm}
 \begin{lemma}\label{ReviseTheo}(see \cite[Theorem 4.1]{ZhangZX13})
  Fix an arbitrary $Z\in\mathbb{S}^n$ with the spectral decomposition as
  in \eqref{Zdecom}-\eqref{idx-eigZ}. Then, $f\!:\mathbb{R}\to\mathbb{R}$
  is second-order directionally differentiable at each $\lambda_i(Z)$ if
  and only if the associated L\"{o}wner operator $\mathscr{F}$ is second-order
  directionally differentiable at $Z$. In particular, for any given
  $(H,W)\in \mathbb{S}^n\times\mathbb{S}^n$, $\mathscr{F}''(Z;H,W)=PBP^\mathbb{T}$.
  Among others, for each $k\in\{1,2,\ldots,r\}$,
  the submatrix $B_{a_ka_k}$ takes the form of
  \begin{align}\label{BKKFormula}
  &Q^k\big[\phi_{k}^{[1]}\big(\Lambda(\widetilde{H}_{a_k a_k})\big)
  \circ (Q^k)^{\mathbb{T}}\widetilde{V}_{k}(H,W)Q^k\big](Q^k)^{\mathbb{T}}
  +2I_{a_k}^{\mathbb{T}}\widetilde{H}G^{[2]}_{kk}\big(\Lambda(Z)\big)\widetilde{H}I_{a_k}\nonumber\\
  &+ Q^k{\rm Diag}\Big(\Psi_{k,1}((Q^k_{b^k_1})^{\mathbb{T}}\widetilde{V}_{k}(H,W)Q^k_{b^k_1}),\ldots,
  \Psi_{k,N_{k}}((Q^k_{b^k_{N_k}})^{\mathbb{T}}\widetilde{V}_{k}(H,W)Q^k_{b^k_{N_k}})\Big)
  (Q^k)^{\mathbb{T}}
  \end{align}
  where $\widetilde{V}_k(H,W)\!:=P^{\mathbb{T}}_{a_k}\big(W\!-2H(Z-\mu_k I)^{\dagger}H\big)P_{a_k}$
  and $\Psi_{k,j}(\cdot)$ for $j=1,2,\ldots,N_k$ is the L\"{o}wner operator associated to
  $f''(\mu_k;\eta^k_j,\cdot)$; and for each $k,l\in\{1,\ldots,r\}$ with $k\neq l$,
 \begin{subequations}
 \begin{align}\label{BKLFormula1}
  B_{a_k a_l}=\big[g^{[1]}(\Lambda(Z))\big]_{a_k a_l}\circ \widetilde{W}_{a_k a_l}
  +2I^{\mathbb{T}}_{a_k} \widetilde{H} G^{[2]}_{kl}\big(\Lambda(Z)\big)\widetilde{H} I_{a_l}\\
  \label{BKLFormula2}
  +\frac{2\widetilde{H}_{a_l a_k}\Phi_{l}(\widetilde{H}_{a_l a_l})}{\mu_l-\mu_k}
         +\frac{2\Phi_{k}(\widetilde{H}_{a_k a_k})\widetilde{H}_{a_k a_l}}{\mu_k-\mu_l}.
  \qquad\qquad
 \end{align}
 \end{subequations}
 \end{lemma}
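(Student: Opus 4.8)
This statement is quoted as \cite[Theorem 4.1]{ZhangZX13}, so the paper itself supplies no proof; I nonetheless outline how I would reconstruct it. The plan is to expand $\mathscr{F}(Z+\tfrac{}{}tH+\tfrac{1}{2}t^2W)$ to order $t^2$ via spectral perturbation theory. Because the L\"{o}wner operator is orthogonally invariant, $\mathscr{F}(P^{\mathbb{T}}MP)=P^{\mathbb{T}}\mathscr{F}(M)P$, I would first conjugate by $P$ and work with $\widetilde{Z}(t):=\Lambda(Z)+t\widetilde{H}+\tfrac{1}{2}t^2\widetilde{W}$ where $\widetilde{H}=P^{\mathbb{T}}HP$ and $\widetilde{W}=P^{\mathbb{T}}WP$; the task then reduces to a purely spectral computation around the diagonal matrix $\Lambda(Z)$, and the answer is read off as the $t^2$-coefficient in $\mathscr{F}(Z+tH+\tfrac12 t^2W)=\mathscr{F}(Z)+t\mathscr{F}'(Z;H)+\tfrac12 t^2\mathscr{F}''(Z;H,W)+o(t^2)$.

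The central difficulty is degenerate perturbation theory: the eigenvalues of $\widetilde{Z}(t)$ cluster at order zero into the groups $a_k$ attached to the distinct values $\mu_k$, so individual eigenvectors need not be differentiable and only the cluster projectors are analytic in $t$. I would isolate each cluster with the Riesz projector $\frac{1}{2\pi i}\oint_{\Gamma_k}(\zeta I-\widetilde{Z}(t))^{-1}\,d\zeta$ for a small contour $\Gamma_k$ enclosing $\mu_k$ alone, obtaining a smooth invariant subspace of dimension $|a_k|$ whose compressed block equals $\mu_k I+t\widetilde{H}_{a_ka_k}+O(t^2)$. Thus the first-order splitting inside the cluster is governed exactly by the secondary eigenvalue decomposition \eqref{Hkdecom}-\eqref{alpk-gamk}, whose data $\eta^k_j$ and $Q^k$ then feed a \emph{second} application of the L\"{o}wner machinery, now to the scalar functions $\phi_k,\phi_k^{[1]}$ and $\Psi_{k,j}$ assembled from the directional derivatives $f'(\mu_k;\cdot)$ and $f''(\mu_k;\cdot,\cdot)$.

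Pushing to second order then splits into two regimes. For a diagonal block $B_{a_ka_k}$ there are three contributions: the curvature of $f$ along the first-order movement of eigenvalues, giving the divided-difference term $\phi_k^{[1]}(\Lambda(\widetilde{H}_{a_ka_k}))\circ(\cdots)$ together with the genuinely second-order pieces $\Psi_{k,j}$; the cross-cluster coupling, where expanding the resolvent produces the correction $-2H(Z-\mu_k I)^{\dagger}H$ inside $\widetilde{V}_k(H,W)$, the Moore--Penrose inverse $(Z-\mu_kI)^{\dagger}$ encoding the energy denominators $1/(\mu_l-\mu_k)$; and the direct $\widetilde{W}$ term carried through $\widetilde{V}_k(H,W)$, with the leftover $2I_{a_k}^{\mathbb{T}}\widetilde{H}G^{[2]}_{kk}(\Lambda(Z))\widetilde{H}I_{a_k}$ accounting for the second divided differences across clusters. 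For an off-diagonal block $B_{a_ka_l}$ with $k\ne l$ the eigenvalues remain separated by $\mu_k-\mu_l$, so ordinary first-order divided-difference perturbation applies and yields $[g^{[1]}(\Lambda(Z))]_{a_ka_l}\circ\widetilde{W}_{a_ka_l}$, while the remaining terms of \eqref{BKLFormula1}-\eqref{BKLFormula2} come from the second-order corrections of the eigenprojectors through $G^{[2]}_{kl}$ and the intra-cluster frames $\Phi_k,\Phi_l$.

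The hard part throughout is the bookkeeping: justifying that the three diagonal contributions decouple cleanly and that the resolvent expansion reproduces exactly $(Z-\mu_kI)^{\dagger}$ together with the factors $1/(\mu_l-\mu_k)$, without the intra-cluster rotations of $Q^k$ leaking spurious cross terms. Finally, the equivalence of second-order directional differentiability of $f$ and of $\mathscr{F}$ falls out as a by-product: the \emph{if} direction follows by restricting $\mathscr{F}$ to commuting (scalar) directions, and the \emph{only if} direction because every term of the derived formula is well defined precisely when each $f''(\mu_k;\cdot,\cdot)$ exists.
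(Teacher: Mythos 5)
Your opening observation is correct: the paper supplies no proof of this lemma --- it is imported from \cite[Theorem 4.1]{ZhangZX13}, and the only original content here is the remark immediately following it, which \emph{corrects} that theorem by restoring the term $2I_{a_k}^{\mathbb{T}}\widetilde{H}G^{[2]}_{kk}(\Lambda(Z))\widetilde{H}I_{a_k}$ in \eqref{BKKFormula} and adjusting the cross term $2I^{\mathbb{T}}_{a_k}\widetilde{H}G^{[2]}_{kl}(\Lambda(Z))\widetilde{H}I_{a_l}$ in \eqref{BKLFormula1}. Your reconstruction plan --- reduce by orthogonal invariance to $\Lambda(Z)+t\widetilde{H}+\frac{1}{2}t^{2}\widetilde{W}$, isolate each eigenvalue cluster $a_k$ by Riesz projectors, identify the compressed block $\mu_k I+t\widetilde{H}_{a_ka_k}+\frac{t^{2}}{2}\widetilde{V}_k(H,W)+o(t^{2})$ (whence the pseudoinverse $(Z-\mu_kI)^{\dagger}$), and run the L\"{o}wner machinery a second time inside the cluster via the secondary decomposition \eqref{Hkdecom}-\eqref{alpk-gamk} --- is the standard architecture for such results and agrees in spirit with the perturbation analysis underlying \cite{ZhangZX13}.

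As a proof, however, the proposal has genuine gaps, and the first one sits exactly where the cited theorem went wrong. The $g^{[2]}$-weighted contributions do not come from an unspecified ``bookkeeping of second divided differences across clusters''; they arise from the $t^{2}$-coefficient of the cluster eigenprojections, i.e.\ from $\sum_{j}f(\mu_j)$ times the second-order term in the expansion of $P_{a_j}(t)P_{a_j}(t)^{\mathbb{T}}$, and they collapse into the divided differences $g^{[2]}(\mu_k,\mu_j,\mu_l)$ only after using the defining properties $g(\mu_j)=f(\mu_j)$ and $g'(\mu_j)=0$ of the auxiliary bump construction. A sketch that does not carry out this computation can neither derive the corrected diagonal term nor detect that it was missing in the source, which is the entire reason the lemma is restated here; verifying that term should be the centerpiece of any blind proof attempt. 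Second, $f$ is only second-order \emph{directionally} differentiable, not $C^{2}$, so expanding $f$ on the compressed block $\mu_kI+tA_k(t)$ with $A_k(t)\to\widetilde{H}_{a_ka_k}$ requires Hadamard-type uniformity of $f'(\mu_k;\cdot)$ and $f''(\mu_k;\eta^k_j,\cdot)$ (guaranteed, e.g., by local Lipschitz continuity of $f$ near each $\mu_k$) together with a second level of clustering along the eigenvalues $\eta^k_j$; your ``second application of the L\"{o}wner machinery'' silently assumes this uniformity. Third, your justification of the ``only if'' direction ($f$ second-order directionally differentiable $\Rightarrow$ $\mathscr{F}$ is) --- that ``every term of the derived formula is well defined precisely when each $f''(\mu_k;\cdot,\cdot)$ exists'' --- is not an argument: well-definedness of a candidate formula proves nothing; that direction is established only by completing the full expansion with a uniform $o(t^{2})$ remainder. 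The converse direction, testing $\mathscr{F}$ along directions commuting with $Z$, is fine as you state it.
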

 \begin{remark}
  Notice that the term $2I_{a_k}^{\mathbb{T}}\widetilde{H}G^{[2]}_{kk}(\Lambda(Z))\widetilde{H}I_{a_k}$
  in \eqref{BKKFormula} is lost in \cite[Theorem 4.1]{ZhangZX13}, and the term
  $2I^{\mathbb{T}}_{a_k} \widetilde{H} G^{[2]}_{kl}(\Lambda(Z))\widetilde{H} I_{a_l}$
  in \eqref{BKLFormula1} is a little different from the one there.
 \end{remark}

  Next we figure out the structure of each block $B_{a_ka_l}$ associated to
  $f(t)\!=\max(0,t)$.
 \begin{proposition}\label{BKL}
  Let $f(t)\!=\max(0,t)$ for $t\in\mathbb{R}$. Then, for each $k,l\in\{1,2,\ldots,r\}$
  with $k<l$, the submatrix $B_{a_ka_l}$ defined in \eqref{BKLFormula1}-\eqref{BKLFormula2} takes
  the following form
  \begin{subnumcases}{}
   \label{klr01}
   \widetilde{W}_{\!a_ka_l}\!+2P_{\!a_k}^{\mathbb{T}}H\Big[\!\sum_{j=r_0+1}^{r} \frac{\mu_j}{(\mu_j\!-\!\mu_k)(\mu_j\!-\!\mu_l)}P_{\!a_j}P_{\!a_j}^\mathbb{T}\Big]HP_{a_l}
   \quad{\rm if}\ k<l<r_0;\\
   \label{klr02}
   \widetilde{W}_{a_ka_l}\!+2P^{\mathbb{T}}_{a_k}H\!\sum_{j=r_0+1}^{r} \frac{P_{\!a_j}P_{\!a_j}^\mathbb{T}}{\mu_j\!-\!\mu_k}HP_{a_l}
   +\frac{2}{\mu_k}\widetilde{H}_{a_k a_l}\Pi_{\mathbb{S}^{|a_l|}_-}(\widetilde{H}_{a_l a_l})
   \ \ {\rm if}\ k<l\!=r_0;\\
   \label{klr03}
   P^{\mathbb{T}}_{a_k}H\sum_{j=1}^{r_0-1}\frac{1}{(\mu_j\!-\!\mu_l)}P_{a_j}P_{a_j}^\mathbb{T}HP_{a_l}
   -\frac{2}{\mu_l}\Pi_{\mathbb{S}^{|a_k|}_+}(\widetilde{H}_{a_k a_k})\widetilde{H}_{a_k a_l}
   \quad{\rm if}\ k=r_0<l;\quad\\
   \label{klr04}
   P^{\mathbb{T}}_{\!a_k}
   H\!\sum_{j=1}^{r_0-1}\frac{\mu_j}{(\mu_j-\mu_k)(\mu_j-\mu_l)}P_{\!a_j}P_{\!a_j}^\mathbb{T}HP_{a_l}
    \quad{\rm if}\ \ r_0<k<l;\qquad\qquad\\
   \label{klr05}
   \Sigma_{\alpha\gamma}\circ\widetilde{W}_{a_k a_l}+2 I^\mathbb{T}_{a_k} \widetilde{H}G^{[2]}_{kl}(\Lambda(Z))\widetilde{H}I_{a_l}+
   \frac{2\widetilde{H}_{a_k a_k}\widetilde{H}_{a_k a_l}}{\mu_k-\mu_l}\quad{\rm if}\ \ k<r_0<l\\
   {\rm with}\
   I^{\mathbb{T}}_{a_k}\widetilde{H}G^{[2]}_{kl}(\Lambda(Z))\widetilde{H}I_{a_l}
   =P^{\mathbb{T}}_{a_k}H\Big[\sum_{j=1}^{r} g^{[2]}(\mu_k,\mu_j,\mu_l)P_{\!a_j}P_{\!a_j}^\mathbb{T}\Big]HP_{a_l}.\nonumber
 \end{subnumcases}
 \end{proposition}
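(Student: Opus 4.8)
The plan is to specialize the general off-diagonal formula \eqref{BKLFormula1}--\eqref{BKLFormula2} of Lemma \ref{ReviseTheo} to $f(t)=\max(0,t)$, whose associated L\"{o}wner operator is exactly $\Pi_{\mathbb{S}^n_+}$. That formula writes $B_{a_ka_l}$ (for $k\ne l$) as the sum of four pieces: the first divided difference $g^{[1]}(\mu_k,\mu_l)$ weighting $\widetilde{W}_{a_ka_l}$; the term $2I_{a_k}^{\mathbb{T}}\widetilde{H}G^{[2]}_{kl}(\Lambda(Z))\widetilde{H}I_{a_l}$ carrying the second divided differences $g^{[2]}(\mu_k,\mu_j,\mu_l)$; and the two $\Phi$-terms built from $f'(\mu_k;\cdot)$ and $f'(\mu_l;\cdot)$. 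Hence the proof reduces to evaluating these three scalar/operator ingredients for $f=\max(0,\cdot)$ and reassembling them according to the signs of $\mu_k$ and $\mu_l$, i.e.\ to the relative position of $k,l$ with respect to $r_0$.

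First I would record the elementary data. Since $g$ interpolates $f$ at the distinct eigenvalues and, by the construction of the bump functions $g_k$, is locally constant near each $\mu_k$, we have $g(\mu_k)=\max(0,\mu_k)$ and $g'(\mu_k)=0$ for every $k$; the vanishing derivative determines the repeated-argument values $g^{[2]}(\mu_k,\mu_k,\mu_l)=\frac{g^{[1]}(\mu_k,\mu_l)}{\mu_l-\mu_k}$ and $g^{[2]}(\mu_k,\mu_l,\mu_l)=\frac{g^{[1]}(\mu_k,\mu_l)}{\mu_k-\mu_l}$. Evaluating $g^{[1]}(\mu_k,\mu_l)=\frac{\max(0,\mu_k)-\max(0,\mu_l)}{\mu_k-\mu_l}$ gives the coefficient on $\widetilde{W}_{a_ka_l}$: it equals $1$ when $\mu_k,\mu_l\ge0$ are not both zero, $0$ when $\mu_k,\mu_l\le0$, and the entry $\frac{\mu_k}{\mu_k-\mu_l}$ of $\Sigma_{\alpha\gamma}$ when $\mu_k>0>\mu_l$, which already reproduces the leading terms of \eqref{klr01}--\eqref{klr05}. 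Finally I would identify $\Phi_k$ from its definition: because $f'(\mu_k;\cdot)$ is the identity map when $\mu_k>0$, the zero map when $\mu_k<0$, and the positive-part map $\max(0,\cdot)$ when $\mu_k=0$, the reconstruction formula for $\Phi_k$ gives $\Phi_k(\widetilde{H}_{a_ka_k})$ equal to $\widetilde{H}_{a_ka_k}$, to $0$, and to $\Pi_{\mathbb{S}^{|a_k|}_+}(\widetilde{H}_{a_ka_k})$, respectively, and similarly for $\Phi_l$.

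With these in hand I would treat the five configurations $k<l<r_0$, $k<l=r_0$, $k=r_0<l$, $r_0<k<l$ and $k<r_0<l$ in turn, in each case evaluating $g^{[2]}(\mu_k,\mu_j,\mu_l)$ by the sign of $\mu_j$ and converting $I_{a_k}^{\mathbb{T}}\widetilde{H}G^{[2]}_{kl}(\Lambda(Z))\widetilde{H}I_{a_l}$ into $P_{a_k}^{\mathbb{T}}H\big[\sum_{j}g^{[2]}(\mu_k,\mu_j,\mu_l)P_{a_j}P_{a_j}^{\mathbb{T}}\big]HP_{a_l}$ by the identity recorded under \eqref{klr05}. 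For $j\ne k,l$ a term survives only when $g^{[1]}(\mu_k,\mu_j)\ne g^{[1]}(\mu_k,\mu_l)$, which in the interior cases restricts the sum to the oppositely-signed block (the $\gamma$-block in \eqref{klr01}, the $\alpha$-block in \eqref{klr04}), producing the displayed sums $\sum_j(\cdots)P_{a_k}^{\mathbb{T}}HP_{a_j}P_{a_j}^{\mathbb{T}}HP_{a_l}$. The decisive mechanism is the interaction of the $j=k,l$ terms of the $G^{[2]}$ sum with the $\Phi$-terms of \eqref{BKLFormula2}: when $\mu_k,\mu_l>0$ the diagonal-block terms cancel against $\Phi_k,\Phi_l$ (which reproduce the full blocks); when $\mu_k,\mu_l<0$ both families vanish; and at a zero eigenvalue, where $\Phi$ becomes $\Pi_{\mathbb{S}_+}$, the surviving repeated-index term combines with it through $A=\Pi_{\mathbb{S}^m_+}(A)+\Pi_{\mathbb{S}^m_-}(A)$ to build the projected blocks $\Pi_{\mathbb{S}^{|a_l|}_-}$ and $\Pi_{\mathbb{S}^{|a_k|}_+}$ appearing in \eqref{klr02} and \eqref{klr03}.

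I expect the main obstacle to be precisely these two boundary configurations $l=r_0$ and $k=r_0$, where $f'(0;\cdot)=\max(0,\cdot)$ is nonsmooth and one must carefully merge a repeated-index second-divided-difference term with the projection $\Phi=\Pi_{\mathbb{S}_+}$ into a single projected block, tracking the sign of $\mu_k-\mu_l$ throughout. The genuinely mixed case $k<r_0<l$ of \eqref{klr05} is the single place where the second-divided-difference sum does not collapse to one block and must be retained in full; there the computation is merely the substitution of the above identity together with the surviving $\Phi_k$-term $\frac{2\widetilde{H}_{a_ka_k}\widetilde{H}_{a_ka_l}}{\mu_k-\mu_l}$ (note $\Phi_l=0$ since $\mu_l<0$). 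All remaining steps are routine divided-difference algebra.
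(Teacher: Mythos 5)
Your proposal is correct and follows essentially the same route as the paper's proof: the paper likewise proceeds by computing $\Phi_k(\widetilde{H}_{a_ka_k})$ from $f'(\mu_k;\cdot)$ (its Step~1, giving $\widetilde{H}_{a_ka_k}$, $\Pi_{\mathbb{S}^{|a_k|}_+}(\widetilde{H}_{a_ka_k})$, or $0$ according to $k<r_0$, $k=r_0$, $k>r_0$), evaluating $g^{[2]}(\mu_k,\mu_j,\mu_l)$ by the sign of $\mu_j$ in the same case split relative to $r_0$ using $g(\mu_i)=f(\mu_i)$ and $g'(\mu_i)=0$ (its Step~2), computing $g^{[1]}(\Lambda(Z))$ for the coefficient on $\widetilde{W}_{a_ka_l}$ (its Step~3), and then assembling via \eqref{BKLFormula1}--\eqref{BKLFormula2}. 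Your explicit account of the cancellation of the repeated-index $g^{[2]}$ terms against the $\Phi$-terms, and of the merge via $A=\Pi_{\mathbb{S}_+}(A)+\Pi_{\mathbb{S}_-}(A)$ in the boundary cases $k=r_0$ and $l=r_0$, is exactly the algebra the paper leaves implicit in its final combination step.
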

 \begin{proof}
  We proceed the arguments by three steps as will be shown below.

  \noindent
  {\bf Step 1:} to characterize $\Phi_{k}(\widetilde{H}_{a_k a_k})$.
  From the definition of $\phi_k$, it follows that
  \begin{align}\label{phiketa}
  \phi_k(\eta^k_j)=f'(\mu_k;\eta^k_j)=\left\{\begin{array}{cl}
                      \eta^k_j & {\rm if\;} \mu_k\ge0,\\
                       f(\eta^k_j) & {\rm if\;} \mu_k=0, \\
                       0 &{\rm if\;} \mu_k<0
                    \end{array}\right.\ \ {\rm for}\ i=1,2,\ldots,N_k.
  \end{align}
  Recall that $\Phi_{k}(\widetilde{H}_{a_ka_k})=\sum_{j=1}^{N_k} f'(\mu_k;\eta^k_j)Q^k_{b^k_j}(Q^k_{b^k_j})^\mathbb{T}$.
  Then, it is immediate to obtain
  \begin{align}\label{Phik}
  \Phi_{k}(\widetilde{H}_{a_k a_k})
   =\left\{\begin{array}{cl}
    \widetilde{H}_{a_k a_k} & {\rm if\;} k<r_0,\\
     \Pi_{\mathbb{S}^{|a_k|}_+}(\widetilde{H}_{a_ka_k}) & {\rm if\;} k=r_0, \\
     0 &{\rm if\;} k>r_0.
    \end{array}\right.
  \end{align}

  \noindent
  {\bf Step 2:} to calculate $I^{\mathbb{T}}_{a_k}\widetilde{H}G^{[2]}_{kl}(\Lambda(Z))\widetilde{H}I_{a_l}$.
  From the definition of $G^{[2]}_{kl}$, it follows that
  \begin{align}\label{Gkleq1}
  I^{\mathbb{T}}_{a_k}\widetilde{H}G^{[2]}_{kl}(\Lambda(Z))\widetilde{H}I_{a_l}
  &={\textstyle\sum_{j=1}^{r}} g^{[2]}(\mu_k,\mu_j,\mu_l)I^{\mathbb{T}}_{a_k}\widetilde{H}I_{a_j}I_{a_j}^\mathbb{T}
  \widetilde{H}I_{a_l}\nonumber\\
  &= P^{\mathbb{T}}_{a_k}H \big({\textstyle\sum_{j=1}^{r}} g^{[2]}(\mu_k,\mu_j,\mu_l)P_{a_j}P_{a_j}^\mathbb{T}\big) HP_{a_l}.
  \end{align}
  We make simplification for the term on the right hand side by the following three cases,
  where the fact that $g(\mu_i)=f(\mu_i)$ and $g'(\mu_i)=0$ for $i=1,2,\ldots,r$ are frequently used.

  \noindent
  {\bf Case 1: $k<l\leq  r_0$.} After an elementary calculation, it is easy to obtain that
  \[
  g^{[2]}(\mu_k,\mu_j,\mu_l)
   =\left\{\begin{array}{cl}
   \frac {1}{\mu_l-\mu_k} & {\rm if\;} \mu_j= \mu_k,\\
   \frac {1}{\mu_k-\mu_l} & {\rm if\;} \mu_j= \mu_l,\\
   0 & {\rm if\;}\mu_j\geq 0, \mu_j\neq \mu_k, \mu_j\neq \mu_l,\\
   \frac{\mu_j}{(\mu_k-\mu_j)(\mu_j-\mu_l)} &{\rm if\;} \mu_j<0.\\
   \end{array}\right.
  \]
  Together with equation \eqref{Gkleq1}, it is immediate to have that
  \begin{align}\label{GklResult1}
   I^{\mathbb{T}}_{a_k}\widetilde{H}G^{[2]}_{kl}(\Lambda(Z))\widetilde{H}I_{a_l}
   &= \frac {1}{\mu_l-\mu_k}\widetilde{H}_{a_ka_k}\widetilde{H}_{a_ka_l}+
   \frac {1}{\mu_k-\mu_l}\widetilde{H}_{a_ka_l}\widetilde{H}_{a_la_l}\nonumber\\
   &\quad+P^{\mathbb{T}}_{a_k}H \Big[\sum_{j=r_0+1}^{r}
   \frac{\mu_j}{(\mu_k-\mu_j)(\mu_j-\mu_l)}P_{a_j}P_{a_j}^\mathbb{T}\Big]HP_{a_l}.
   \end{align}

  \noindent
  {\bf Case 2: $k=r_0<l$.} In this case, $\mu_k=0$ and
  \(
  g^{[2]}(\mu_k,\mu_j,\mu_l)=\left\{\begin{array}{cl}
                              \!{1}/{(\mu_j\!-\!\mu_l)} & {\rm if\;} \mu_j>0,\\
                              0 & {\rm if\;} \mu_j\leq 0.
                                \end{array}\right.
  \)
  Together with equation \eqref{Gkleq1}, it is immediate to obtain that
  \begin{equation}\label{GklResult3}
   I^{\mathbb{T}}_{a_k}\widetilde{H}G^{[2]}_{kl}(\Lambda(Z))\widetilde{H}I_{a_l}
   =P^{\mathbb{T}}_{a_k}H \Big[\sum_{j=1}^{r_0-1} \frac{1}{(\mu_j-\mu_l)}P_{a_j}P_{a_j}^\mathbb{T}\Big]HP_{a_l}.
  \end{equation}

  \noindent
  {\bf Case 3: $r_0<k<l$.} Now,
  \(
    g^{[2]}(\mu_k,\mu_j,\mu_l)
    =\left\{\begin{array}{cl}
     \!\frac {\mu_j}{(\mu_j-\mu_k)(\mu_j-\mu_l)}& {\rm if}\ \mu_j\geq 0,\\
     0 & {\rm if}\ \mu_j<0.
     \end{array}\right.
  \)
  From \eqref{Gkleq1},
  \begin{align}\label{GklResult4}
   I^{\mathbb{T}}_{a_k}\widetilde{H}G^{[2]}_{kl}(\Lambda(Z))\widetilde{H}I_{a_l}
   =P^{\mathbb{T}}_{a_k}H \Big[\sum_{j=1}^{r_0-1} \frac{\mu_j}{(\mu_j-\mu_k)(\mu_j-\mu_l)}P_{a_j}P_{a_j}^\mathbb{T}\Big] HP_{a_l}
  \end{align}

  \noindent
  {\bf Step 3:} to characterize $B_{a_k a_l}$ for any $k\neq l$.
  By the eigenvalue index sets of $Z$, we have
  \begin{align}\label{gDifference}
  g^{[1]}(\Lambda(Z))=\left[\begin{matrix}
       \widehat{E}_{\alpha\alpha}  & E_{\alpha\beta} &\Sigma_{\alpha\gamma}\\
        E_{\alpha\beta}^\mathbb{T} & 0  & 0 \\
        \Sigma_{\alpha\gamma}^\mathbb{T}&  0 & 0
        \end{matrix}\right ]
  \end{align}
  where
  $\widehat{E}_{\alpha\alpha}=E_{\alpha\alpha}\!-\!{\rm Diag}(E_{a_1a_1},\cdots,E_{a_{r_0-1}a_{r_0-1}})$
  and $\Sigma_{ij}=\frac{\max\{0,\mu_i\}-\max\{0,\mu_j\}}{\mu_i-\mu_j}$
  for any $i\in \alpha$ and $j\in \beta$. Combining \eqref{gDifference}
  with \eqref{Phik}-\eqref{GklResult4} and \eqref{BKLFormula1}-\eqref{BKLFormula2},
  we get the result.
  \end{proof}
 \begin{proposition}\label{BKK}
  Let $f(t)=\max(0,t)$ for $t\in\mathbb{R}$. For each $k\in\{1,2,\ldots,r\}$,
  we have
  \begin{subequations}
  \begin{align}\label{BKKResult1}
   B_{a_k a_k}&=\widetilde{W}_{a_ka_k}-2P^{\mathbb{T}}_{a_k}H\Big[\sum_{j=r_0+1}^{r} \frac{\mu_j}{(\mu_j-\mu_k)^2}P_{a_j}P_{a_j}^\mathbb{T}\Big]HP_{a_k}\quad{\rm for}\ k<r_0;\\
  \label{BKKResult2}
   B_{a_{k} a_{k}}&=2P^{\mathbb{T}}_{a_k}H\Big[\sum_{j=1}^{r_0-1} \frac{\mu_j}{(\mu_j-\mu_k)^2}P_{a_j}P_{a_j}^\mathbb{T}\Big]HP_{a_k}\quad{\rm for}\ k>r_0;\\
  \label{BKKResult3}
   B_{a_k a_k}&=Q^k\!\left[\begin{matrix}
   (\widehat{V}_{k}(H,W))_{\alpha^k \alpha^k} & (\widehat{V}_{k}(H,W))_{\alpha^k \beta^k}
   &\Sigma_{\alpha^k\gamma^k}\circ (\widehat{V}_{k}(H,W))_{\alpha^k\gamma^k} \\
   *  & \Pi_{\mathbb{S}^{|\beta^k|}_+}((\widehat{V}_{k}(H,W))_{\beta^k\beta^k}) & 0\\
   * & 0 & 0
   \end{matrix}\right](Q^k)^\mathbb{T} \nonumber\\
   &\quad +2P^{\mathbb{T}}_{a_k}H \Big(\sum_{j=1}^{r_0-1} \frac{1}{\mu_j}P_{a_j}P_{a_j}^\mathbb{T}\Big)HP_{a_k}
   \quad{\rm for}\ k=r_0
  \end{align}
    where $\Sigma_{ij}:=\frac{\max\{0,\eta^k_i\}-\max\{0,\eta^k_j\}}{\eta^k_i-\eta^k_j}$
    for $i\in\alpha^k,j\in\beta^k$, and $\widehat{V}_{k}(H,W):=(Q^k)^{\mathbb{T}} \widetilde{V}_{k}(H,W) Q^k$.
  \end{subequations}
  \end{proposition}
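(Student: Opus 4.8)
The plan is to read $B_{a_k a_k}$ directly off the formula \eqref{BKKFormula} of Lemma \ref{ReviseTheo}, evaluating each of its three constituents---the Hadamard term $Q^k[\phi_k^{[1]}(\Lambda(\widetilde{H}_{a_ka_k}))\circ(Q^k)^{\mathbb{T}}\widetilde{V}_k(H,W)Q^k](Q^k)^{\mathbb{T}}$, the divided-difference term $2I_{a_k}^{\mathbb{T}}\widetilde{H}G^{[2]}_{kk}(\Lambda(Z))\widetilde{H}I_{a_k}$, and the block-diagonal term assembled from the operators $\Psi_{k,j}$---in the three regimes $k<r_0$, $k>r_0$ and $k=r_0$ dictated by the sign of $\mu_k$. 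As in Proposition \ref{BKL}, I would repeatedly use $g(\mu_i)=f(\mu_i)=\max(0,\mu_i)$ and $g'(\mu_i)=0$, and the identity $(Z-\mu_kI)^{\dagger}=\sum_{j\ne k}\frac{1}{\mu_j-\mu_k}P_{a_j}P_{a_j}^{\mathbb{T}}$, so that $\widetilde{V}_k(H,W)=\widetilde{W}_{a_ka_k}-2P_{a_k}^{\mathbb{T}}H\big(\sum_{j\ne k}\frac{1}{\mu_j-\mu_k}P_{a_j}P_{a_j}^{\mathbb{T}}\big)HP_{a_k}$.

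The first ingredient is the pair of directional derivatives of $f=\max(0,\cdot)$. First I would record $f'(\mu_k;\cdot)$, which by \eqref{phiketa} is the identity for $\mu_k>0$, the zero map for $\mu_k<0$, and $\max(0,\cdot)$ for $\mu_k=0$; hence the off-block entries of $\phi_k^{[1]}(\Lambda(\widetilde{H}_{a_ka_k}))$ equal $1$, $0$, and $\Sigma_{ij}=\frac{\max\{0,\eta^k_i\}-\max\{0,\eta^k_j\}}{\eta^k_i-\eta^k_j}$ in the three cases, while the within-block entries vanish since $\phi_k'(\eta^k_j)=0$. Next I would compute the parabolic second-order derivative by the limit definition, obtaining $f''(\mu_k;h,w)=f'(\mu_k)w$ at the smooth points $\mu_k\ne0$ (so $\Psi_{k,j}$ is the identity when $\mu_k>0$ and is $0$ when $\mu_k<0$), and at the nonsmooth point $\mu_k=0$ the three-way split $f''(0;h,w)=w$ if $h>0$, $0$ if $h<0$, and $\max(0,w)$ if $h=0$; this makes $\Psi_{k,j}$ the identity on the $\alpha^k$ blocks, $\Pi_{\mathbb{S}^{|\beta^k|}_+}(\cdot)$ on the $\beta^k$ block, and $0$ on the $\gamma^k$ blocks of $\widehat{V}_k(H,W)$.

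For the divided-difference term I would reuse \eqref{Gkleq1} with $l=k$, so that only $g^{[2]}(\mu_k,\mu_j,\mu_k)$ is needed. By symmetry of the divided difference together with $g'(\mu_k)=0$, one gets the closed form $g^{[2]}(\mu_k,\mu_j,\mu_k)=\frac{\max\{0,\mu_j\}-\max\{0,\mu_k\}}{(\mu_j-\mu_k)^2}$, whose evaluation over the index ranges $j<r_0$, $j=r_0$, $j>r_0$ produces the coefficients $\frac{1}{\mu_j-\mu_k}$ (for $\mu_j\ge0$, $\mu_k>0$), $\frac{-\mu_k}{(\mu_j-\mu_k)^2}$ (for $\mu_j<0$, $\mu_k>0$), $\frac{1}{\mu_j}$ (for $\mu_k=0$), and $\frac{\mu_j}{(\mu_j-\mu_k)^2}$ (for $\mu_k<0$) that appear in \eqref{BKKResult1}--\eqref{BKKResult3}.

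Finally I would assemble the three pieces case by case. For $k>r_0$ the Hadamard and $\Psi$ terms both vanish and only the divided-difference term survives, giving \eqref{BKKResult2} at once. For $k=r_0$ the $\Psi_{k,j}$ term supplies exactly the block-diagonal matrix in $Q^k$-coordinates recorded in \eqref{BKKResult3}, the $\Sigma$-weighted Hadamard term supplies its off-diagonal blocks, and the divided-difference term supplies the trailing $2P_{a_k}^{\mathbb{T}}H(\sum_{j=1}^{r_0-1}\mu_j^{-1}P_{a_j}P_{a_j}^{\mathbb{T}})HP_{a_k}$. The case $k<r_0$ is where the main obstacle lies: the Hadamard term zeroes out the within-eigenvalue-block part of $\widehat{V}_k(H,W)$ while the $\Psi_{k,j}$ (identity) term restores precisely that part, so the two combine to the full $\widetilde{V}_k(H,W)$; then adding the divided-difference term and collapsing the coefficient $-\frac{2}{\mu_j-\mu_k}-\frac{2\mu_k}{(\mu_j-\mu_k)^2}=\frac{-2\mu_j}{(\mu_j-\mu_k)^2}$ on the $j>r_0$ part yields \eqref{BKKResult1}. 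The delicate points throughout are getting the parabolic second derivative $f''(0;\cdot,\cdot)$ right and correctly tracking how the within-block data is deleted by $\phi_k^{[1]}$ and reinstated (or transformed) by $\Psi_{k,j}$, since the bump-function construction forces $\phi_k'(\eta^k_j)=0$ and thereby shifts all block-diagonal information into the $\Psi$-term.
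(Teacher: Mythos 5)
Your proposal is correct and follows essentially the same route as the paper's proof: the same four ingredients (the computation of $\phi_k^{[1]}(\Lambda(\widetilde{H}_{a_ka_k}))$ from $f'(\mu_k;\cdot)$, the identification of $\Psi_{k,j}$ via $f''(\mu_k;\eta_j^k,\cdot)$, the closed form $g^{[2]}(\mu_k,\mu_j,\mu_k)=\frac{f(\mu_j)-f(\mu_k)}{(\mu_j-\mu_k)^2}$ for the $G^{[2]}_{kk}$ term, and the case-by-case assembly), including the key observations that for $k<r_0$ the Hadamard and $\Psi$ pieces recombine into the full $\widetilde{V}_k(H,W)$ and that the coefficients collapse to $\frac{-2\mu_j}{(\mu_j-\mu_k)^2}$ on the negative-eigenvalue part. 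The details you flag as delicate (the parabolic derivative $f''(0;\cdot,\cdot)$ and the deletion/reinstatement of block-diagonal data forced by $\phi_k'(\eta_j^k)=0$) are exactly the points the paper's Steps 1--2 handle, so no gap remains.
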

  \begin{proof}
   We proceed the arguments by four steps as will be shown below.

  \noindent
  {\bf Step 1:} to calculate $\phi^{[1]}_{k}(\Lambda(\widetilde{H}_{a_k a_k}))$.
  By \eqref{phiketa}, for each $i,j\in\{1,\ldots,N_k\}$, we have
  \begin{align}
  \phi^{[1]}_k(\eta^k_i,\eta^k_j)=\left\{\begin{array}{cl} 1 & {\rm if\;} \mu_k>0,\eta^k_i\neq \eta^k_j \\
                                \frac{\max\{0,\eta^k_i\}-\max\{0,\eta^k_j\}}{\eta^k_i-\eta^k_j} & {\rm if\;} \mu_k=0, \eta^k_i\neq \eta^k_j  \\
                                0 &{\rm if\;}  \mu_k<0 {\ \ \rm or \ \ } \eta^k_i=\eta^k_j.
                                \end{array}\right.
  \end{align}
  Hence, with $\widehat{E}_{\alpha^k \alpha^k}:=E_{\alpha^k \alpha^k}-{\rm Diag}(E_{b^k_{1}b^k_{1}},\cdots, E_{b^k_{(N^0_k-1)}b^k_{(N^0_{k}-1)}})$, it holds that
  \begin{align}\label{phiDiff}
  \phi^{[1]}_{k}(\Lambda(\widetilde{H}_{a_k a_k}))
  =\left\{\begin{array}{cl}
   E_{a_ka_k}-{\rm Diag}(E_{b^k_{1} b^k_{1}},\cdots, E_{b^k_{N_k} b^k_{N_{k}}})
   & {\rm if\ \ } k<r_0,\\
   \left[\begin{matrix}
   \widehat{E}_{\alpha^k \alpha^k} & E_{\alpha^k \beta^k} &\Sigma_{\alpha^k\gamma^k} \\
   (E_{\alpha^k \beta^k})^\mathbb{T}  & 0  & 0\\
   \Sigma^{\mathbb{T}}_{\alpha^k\gamma^k}& 0 & 0
   \end{matrix}\right]
   & {\rm if\ \ } k=r_0,\\
   0 &{\rm if\ \ } k>r_0.
   \end{array}\right.
  \end{align}

  \noindent
  {\bf Step 2:} to calculate the term $\Psi_{k,p}(\widehat{V}^p_k(H,W))$ with $\widehat{V}^p_k(H,W)\!:=(Q^k_{b^k_p})^{\mathbb{T}}\widetilde{V}_k(H,W)Q^k_{b^k_p}$
  for each $p\in\{1,2,\ldots,N_k\}$. By the expression of $f$, it is easy to obtain that
  \begin{equation}\label{maxFSDir}
    f''(z;d,w)=\left\{\begin{array}{cl}
    w & {\rm if\;} z>0 {\;\rm or\;} z=0,d>0,\\
    0 & {\rm if\;} z<0 {\;\rm or\;} z=0,d<0, \\
    f(w) &{\rm if\;} z=d=0.
    \end{array}\right.
  \end{equation}
   Write $\psi_{k,p}(\cdot):=f''(\mu_k;\eta^k_p,\cdot)$.
   Then, for each $j\in\{1,2,\ldots,|b^k_p|\}$, it holds that
   \[
    \psi_{k,p}(\lambda_j(\widehat{V}^p_k(H,W)))
     =\left\{\begin{array}{cl}
     \lambda_j(\widehat{V}^p_k(H,W)) & {\rm if\;} \mu_k>0 {\;\rm or\;} \mu_k=0,\ \eta^k_p>0,\\
     0 & {\rm if\;} \mu_k<0 {\;\rm or\;} \mu_k=0,\eta^k_p<0, \\
     f(\lambda_j(\widehat{V}^p_k(H,W))) &{\rm if\;} \mu_k=\eta^k_p=0.
     \end{array}\right.
    \]
   Since $\Psi_{k,p}(\cdot)$ be the L$\rm \ddot{o}$wner operator associated to
   $\psi_{k,p}(\cdot)$, we obtain
  \begin{align}\label{PsiKp}
  \Psi_{k,p}(\widehat{V}^p_k(H,W))=\left\{\begin{array}{cl}
                      \widehat{V}^p_k(H,W) & {\rm if\;} k<r_0 {\ \ \rm or \ \ }k=r_0,\eta^{k}_{p}>0,\\
                        0 & {\rm if\;}{k>r_0} {\ \ \rm or\ \ }k=r_0,\eta^{k}_{p}<0,\\
                        \Pi_{\mathbb{S}^{|b^k_p|}_{+}}
                       (\widehat{V}^p_k(H,W))) &{\rm if\;} k=r_0,\eta^k_p=0.
                       \end{array}\right.
  \end{align}

  \noindent
  {\bf Step 3:} to characterize $I^{\mathbb{T}}_{a_k}\widetilde{H}G^{[2]}_{kk}(\Lambda(Z))\widetilde{H}I_{a_k}$
  for each $k\in\{1,2,\ldots,r\}$. Notice that
  \(
  g^{[2]}(\mu_k,\mu_j,\mu_k)=\left\{\begin{array}{cl}
                             \!\frac{f(\mu_j)-f(\mu_k)}{(\mu_j-\mu_k)^2}
                             &{\rm if\;} j\neq k,\\
                                0 & {\rm if\;}j= k.
                                \end{array}\right.
  \)
  Together with the equality \eqref{Gkleq1}, we obtain
  \begin{align}\label{Gkk}
   I^{\mathbb{T}}_{a_k}\widetilde{H}G^{[2]}_{kk}(\Lambda(Z))\widetilde{H}I_{a_k}
    &=P^{\mathbb{T}}_{a_k}H \Big[\sum_{j=1,j\neq k}^{r} \frac{f(\mu_j)-f(\mu_k)}{(\mu_j-\mu_k)^2}P_{a_j}P_{a_j}^\mathbb{T}\Big]HP_{a_k}.
  \end{align}

  \noindent
  {\bf Step 4:} to characterize $B_{a_k a_k}$. When $k<r_0$, we have $\mu_k>0$.
  By \eqref{Gkk}, we have
  \begin{align*}
  I^{\mathbb{T}}_{a_k}\widetilde{H}G^{[2]}_{kk}(\Lambda(Z))\widetilde{H}I_{a_k}
  &=P^{\mathbb{T}}_{a_k}H \Big[\sum_{j=1,j\neq k}^{r_0-1} \frac{P_{\!a_j}P_{\!a_j}^\mathbb{T}}{(\mu_j-\mu_k)}
  -\sum_{j=r_0}^{r} \frac{\mu_kP_{\!a_j}P_{\!a_j}^\mathbb{T}}{(\mu_j-\mu_k)^2}\Big]HP_{a_k}.
  \end{align*}
  By using \eqref{BKKFormula}, \eqref{phiDiff} and \eqref{PsiKp}
  and noting that $\widetilde{V}_k(H,W)=P^{\mathbb{T}}_{a_k}(W-2H(Z-\mu_kI)^\dagger H)P_{a_k}$,
  \begin{equation*}
  B_{a_k a_k}\!=\widetilde{V}_{k}(H,W)
  +2I_{a_k}^{\mathbb{T}}\widetilde{H}G^{[2]}_{kk}(\Lambda(Z))\widetilde{H}I_{a_k}
  =P^{\mathbb{T}}_{a_k}WP_{a_k}-2P^{\mathbb{T}}_{a_k}H\sum_{j=r_0}^{r} \frac{\mu_jP_{a_j}P_{a_j}^\mathbb{T}}{(\mu_j-\mu_k)^2}HP_{a_k}.
  \end{equation*}
  When $k>r_0$, we have $\mu_k<0$. From equation \eqref{Gkk}, it holds that
  \begin{align*}
  I^{\mathbb{T}}_{a_k}\widetilde{H}G^{[2]}_{kk}(\Lambda(Z))\widetilde{H}I_{a_k}
  &=P^{\mathbb{T}}_{a_k}H \Big(\sum_{j=1}^{r_0-1} \frac{\mu_j}{(\mu_j-\mu_k)^2}P_{a_j}P_{a_j}^\mathbb{T}\Big)HP_{a_k}.
  \end{align*}
  The result follows by \eqref{BKKFormula}, \eqref{phiDiff} and \eqref{PsiKp}.
  When $k=r_0$, we have $\mu_k=0$. From \eqref{Gkk},
  \begin{align*}
  I^{\mathbb{T}}_{a_k}\widetilde{H}G^{[2]}_{kk}(\Lambda(Z))\widetilde{H}I_{a_k}
  &=P^{\mathbb{T}}_{a_k}H \Big(\sum_{j=1}^{r_0-1} \frac{1}{\mu_j}P_{a_j}P_{a_j}^\mathbb{T}\Big)HP_{a_k}.
  \end{align*}
  The desired result for this case can be obtained by using
  \eqref{BKKFormula}, \eqref{phiDiff} and \eqref{PsiKp}.
 \end{proof}

 \section{Second-order tangent set to the SDCC set}\label{sec4}

  To characterize the second-order tangent set to the SDCC set $\Omega$,
  we need the following lemma, which states that the operator $\Pi_{\mathbb{S}^n_+}$
  is second-order directionally differentiable.
 \begin{lemma}\label{SOdir-Proj}
  The mappings $\Pi_{\mathbb{S}^n_+}(\cdot)$ and $\Pi_{\mathbb{S}^n_-}(\cdot)$
  are second-order directionally differentiable. Also,
  for any given $(X,Y)\in \Omega$ and $(F,G)\in \mathcal{T}_{\Omega}(X,Y)$,
  it holds that
  \[
    \Pi''_{\mathbb{S}^n_+}(X+Y;F+G,W)=W-\Pi''_{\mathbb{S}^n_-}(X+Y;F+G,W)
    \quad\ \forall\, W\in\mathbb{S}^n.
  \]
 \end{lemma}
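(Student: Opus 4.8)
The plan is to treat the two assertions separately: the second-order directional differentiability of the two projection operators, and the additive identity relating their second-order directional derivatives. The key tools will be Lemma~\ref{ReviseTheo} for the differentiability and the classical Moreau decomposition of $\mathbb{S}^n$ (see, e.g., \cite{BS00}) for the identity; once both ingredients are in place the conclusion is immediate.

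First I would settle the differentiability of $\Pi_{\mathbb{S}^n_+}$. As noted at the beginning of Section~\ref{sec3}, $\Pi_{\mathbb{S}^n_+}(\cdot)$ is precisely the L\"{o}wner operator associated to $f(t)=\max(0,t)$. Formula~\eqref{maxFSDir} already exhibits the second-order directional derivative $f''(z;d,w)$ for every triple $(z,d,w)\in\mathbb{R}^3$, so $f$ is second-order directionally differentiable at each point of $\mathbb{R}$, in particular at every eigenvalue $\lambda_i(Z)$ of an arbitrary $Z\in\mathbb{S}^n$. Lemma~\ref{ReviseTheo} then yields that $\Pi_{\mathbb{S}^n_+}(\cdot)$ is second-order directionally differentiable at every $Z\in\mathbb{S}^n$.

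Next I would invoke the Moreau decomposition. Since $\mathbb{S}^n_+$ is a self-dual closed convex cone with polar $(\mathbb{S}^n_+)^{\circ}=\mathbb{S}^n_-$, every $Z\in\mathbb{S}^n$ satisfies
\[
Z=\Pi_{\mathbb{S}^n_+}(Z)+\Pi_{\mathbb{S}^n_-}(Z),
\]
so that $\Pi_{\mathbb{S}^n_-}=\mathscr{I}-\Pi_{\mathbb{S}^n_+}$, where $\mathscr{I}$ is the identity mapping on $\mathbb{S}^n$. Because $\mathscr{I}$ is linear and $\Pi_{\mathbb{S}^n_+}$ is second-order directionally differentiable, this representation shows at once that $\Pi_{\mathbb{S}^n_-}(\cdot)$ is second-order directionally differentiable as well. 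To extract the stated identity, I would fix $Z=X+Y$ and $D=F+G$ and apply the definition of the parabolic second-order directional derivative along the arc $Z+tD+\frac{1}{2}t^2W$. For the linear map $\mathscr{I}$ a direct computation gives $\mathscr{I}''(Z;D,W)=W$; since the defining difference quotient is additive and the two limits for $\Pi_{\mathbb{S}^n_+}$ and $\Pi_{\mathbb{S}^n_-}$ both exist, passing to the limit in $\mathscr{I}=\Pi_{\mathbb{S}^n_+}+\Pi_{\mathbb{S}^n_-}$ produces
\[
\Pi''_{\mathbb{S}^n_+}(X+Y;F+G,W)+\Pi''_{\mathbb{S}^n_-}(X+Y;F+G,W)=W,
\]
which is exactly the claim after rearrangement and holds for every $W\in\mathbb{S}^n$.

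I do not expect a genuine obstacle here: the argument reduces to a scalar verification through Lemma~\ref{ReviseTheo} together with the linearity of the projection decomposition. The only point requiring care is the additivity step, namely that the second-order directional derivative of the sum $\Pi_{\mathbb{S}^n_+}+\Pi_{\mathbb{S}^n_-}$ splits as the sum of the individual second-order directional derivatives; this is legitimate precisely because each summand is already known to be second-order directionally differentiable, so the limit of the sum of the two difference quotients equals the sum of their limits. It is worth remarking that the hypotheses $(X,Y)\in\Omega$ and $(F,G)\in\mathcal{T}_{\Omega}(X,Y)$ are not actually used in establishing the identity, which is valid for an arbitrary base point $Z$ and direction $D$; they merely record the setting in which the identity will subsequently be applied in characterizing the second-order tangent set to $\Omega$.
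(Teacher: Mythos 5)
Your proposal is correct and follows essentially the same route as the paper: second-order directional differentiability via Lemma~\ref{ReviseTheo} applied to the L\"{o}wner-operator representation, and the identity via the Moreau decomposition $\Pi_{\mathbb{S}^n_-}=\mathscr{I}-\Pi_{\mathbb{S}^n_+}$, which is exactly the difference-quotient manipulation in the paper's proof. Your closing remark is also accurate --- the paper only uses the hypotheses $(X,Y)\in\Omega$ and $(F,G)\in\mathcal{T}_{\Omega}(X,Y)$ to cite \cite[Corollary 3.1]{WuZZ14} for the first-order identity $\Pi'_{\mathbb{S}^n_-}(Z;F+G)=F+G-\Pi'_{\mathbb{S}^n_+}(Z;F+G)$, which in fact holds for arbitrary $Z$ and direction by the same Moreau decomposition, so the identity is valid without them.
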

 \begin{proof}
  Since $\Pi_{\mathbb{S}^n_+}(\cdot)$ and $\Pi_{\mathbb{S}^n_-}(\cdot)$
  are the L\"{o}wner operator associated to $t\mapsto \max(0,t)$
  and $t\mapsto\min(0,t)$ for $t\in\mathbb{R}$, respectively, the first part
  follows by \cite[Theorem 4.1]{ZhangZX13}. Write $Z=X+Y$.
  Since $(X,Y)\in \Omega$ and $(F,G)\in \mathcal{T}_{\Omega}(X,Y)$,
  by \cite[Corollary 3.1]{WuZZ14},
  \[
    \Pi'_{\mathbb{S}^n_-}(Z;F+G)=F+G-\Pi'_{\mathbb{S}^n_+}(Z;F+G).
  \]
  Together with $\Pi_{\mathbb{S}_{-}^n}(A)=A-\Pi_{\mathbb{S}_{+}^n}(A)$ for any $A\in\mathbb{S}^n$,
  it follows that
  \begin{align*}
   &\Pi_{\mathbb{S}^n_-}\Big(Z+t(F+G)+\frac{1}{2}t^2 W \Big)-\Pi_{\mathbb{S}^n_-}(Z)-t\Pi'_{\mathbb{S}^n_-}(Z;F+G)\\
   &=Z+t(F+G)+\frac{1}{2}t^2 W-\Pi_{\mathbb{S}^n_+}\Big(Z+t(F+G)+\frac{1}{2}t^2 W\Big)\\
   &\quad -Z+\Pi_{\mathbb{S}^n_+}(Z)-t(F+G)+t\Pi'_{\mathbb{S}^n_+}(Z;F+G))\\
   &=\frac{1}{2}t^2 W-\Big[\Pi_{\mathbb{S}^n_+}\Big(Z+t(F+G)+\frac{1}{2}t^2 W \Big)-\Pi_{\mathbb{S}^n_+}(Z)
     -t\Pi'_{\mathbb{S}^n_+}(Z;F+G))\Big].
  \end{align*}
  By the definition of second-order directional derivative,
  we obtain the result.
 \end{proof}

 Next we show that the SDCC set $\Omega$ is almost parabolically derivable,
 i.e., its outer second-order tangent set coincides with its inner second-order
 tangent set.
 \begin{proposition}\label{SOTangent}
  Fix an arbitrary $(X,Y)\in \Omega$. Then, for any given $(F,G)\in \mathcal{T}_{\Omega}(X,Y)$,
  \begin{align}\label{MD}
   &\mathcal{T}^{i,2}_{\Omega}((X,Y);(F,G))=\mathcal{T}^{2}_{\Omega}((X,Y);(F,G))\nonumber\\
   &=\Big\{(S,T)\in\mathbb{S}^n\times\mathbb{S}^n\,|\, \Pi''_{\mathbb{S}_+^n}(X+Y;F+G,S+T)=S\Big\}\\
   &=\Big\{(S,T)\in\mathbb{S}^n\times\mathbb{S}^n\,|\, \Pi''_{\mathbb{S}_-^n}(X+Y;F+G,S+T)=T\Big\}.\nonumber
  \end{align}
 \end{proposition}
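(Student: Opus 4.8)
The plan is to exploit the fact that, by the Moreau decomposition associated with the dual pair $(\mathbb{S}^n_+,\mathbb{S}^n_-)$, the SDCC set admits the global parametrization $\Omega=\big\{(\Pi_{\mathbb{S}^n_+}(Z),\Pi_{\mathbb{S}^n_-}(Z))\,|\,Z\in\mathbb{S}^n\big\}$; equivalently, $(X,Y)\in\Omega$ if and only if $X=\Pi_{\mathbb{S}^n_+}(X+Y)$ and $Y=\Pi_{\mathbb{S}^n_-}(X+Y)$. Write $Z:=X+Y$ and $\Xi(Z):=(\Pi_{\mathbb{S}^n_+}(Z),\Pi_{\mathbb{S}^n_-}(Z))$, so that $\Xi(Z)=(X,Y)$. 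Since $(F,G)\in\mathcal{T}_{\Omega}(X,Y)$, the tangent cone characterization of \cite{WuZZ14} (the first-order fact already used in the proof of Lemma \ref{SOdir-Proj}) yields $F=\Pi'_{\mathbb{S}^n_+}(Z;F+G)$ and $G=\Pi'_{\mathbb{S}^n_-}(Z;F+G)$. These first-order identities, together with the second-order directional differentiability of $\Pi_{\mathbb{S}^n_+}$ and $\Pi_{\mathbb{S}^n_-}$ from Lemma \ref{SOdir-Proj}, are the engine of the whole argument.

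First I would prove the inclusion $\big\{(S,T)\,|\,\Pi''_{\mathbb{S}^n_+}(Z;F+G,S+T)=S\big\}\subseteq\mathcal{T}^{i,2}_{\Omega}((X,Y);(F,G))$. Fix such a pair $(S,T)$ and consider the parabola $Z(t):=Z+t(F+G)+\frac{1}{2}t^2(S+T)$, whose image $\Xi(Z(t))$ lies in $\Omega$ for every $t$. By the second-order expansion of the two projections and the first-order identities above,
\[
 \Pi_{\mathbb{S}^n_+}(Z(t))=X+tF+\tfrac{1}{2}t^2\Pi''_{\mathbb{S}^n_+}(Z;F+G,S+T)+o(t^2),
\]
and analogously for $\Pi_{\mathbb{S}^n_-}(Z(t))$. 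By hypothesis the plus-part equals $X+tF+\frac{1}{2}t^2 S+o(t^2)$; and Lemma \ref{SOdir-Proj} gives $\Pi''_{\mathbb{S}^n_+}(Z;F+G,S+T)+\Pi''_{\mathbb{S}^n_-}(Z;F+G,S+T)=S+T$, so the minus-part equals $Y+tG+\frac{1}{2}t^2 T+o(t^2)$. Hence $\Xi(Z(t))=(X,Y)+t(F,G)+\frac{1}{2}t^2(S,T)+o(t^2)$, which gives ${\rm dist}\big((X,Y)+t(F,G)+\frac{1}{2}t^2(S,T),\Omega\big)=o(t^2)$ as $t\downarrow0$; this is precisely membership in the inner second-order tangent set.

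Next I would prove the reverse inclusion $\mathcal{T}^{2}_{\Omega}((X,Y);(F,G))\subseteq\big\{(S,T)\,|\,\Pi''_{\mathbb{S}^n_+}(Z;F+G,S+T)=S\big\}$. Given $(S,T)$ in the outer second-order tangent set, pick $t_k\downarrow0$ and, using closedness of $\Omega$, points $(X_k,Y_k)\in\Omega$ with $(X_k,Y_k)=(X,Y)+t_k(F,G)+\frac{1}{2}t_k^2(S,T)+o(t_k^2)$. Setting $Z_k:=X_k+Y_k=Z+t_k(F+G)+\frac{1}{2}t_k^2(S+T)+o(t_k^2)$ and using $X_k=\Pi_{\mathbb{S}^n_+}(Z_k)$, the nonexpansiveness of $\Pi_{\mathbb{S}^n_+}$ lets me absorb the $o(t_k^2)$ slack inside the argument, after which the second-order expansion yields $X_k=X+t_kF+\frac{1}{2}t_k^2\Pi''_{\mathbb{S}^n_+}(Z;F+G,S+T)+o(t_k^2)$. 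Comparing with $X_k=X+t_kF+\frac{1}{2}t_k^2 S+o(t_k^2)$ and matching the $\frac{1}{2}t_k^2$ coefficients (divide by $t_k^2$ and let $k\to\infty$) gives $\Pi''_{\mathbb{S}^n_+}(Z;F+G,S+T)=S$. Since always $\mathcal{T}^{i,2}_{\Omega}((X,Y);(F,G))\subseteq\mathcal{T}^{2}_{\Omega}((X,Y);(F,G))$, the two inclusions sandwich all three sets together, proving the first chain of equalities in \eqref{MD}; the last equality follows immediately from the Moreau-type identity of Lemma \ref{SOdir-Proj}, under which $\Pi''_{\mathbb{S}^n_+}(Z;F+G,S+T)=S$ holds iff $\Pi''_{\mathbb{S}^n_-}(Z;F+G,S+T)=T$. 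The main obstacle I anticipate is the outer inclusion: one must argue carefully that the $o(t_k^2)$ gap between the tested parabola and the genuinely feasible points $(X_k,Y_k)$ does not pollute the second-order coefficient, which is exactly where nonexpansiveness of the projection and the Hadamard-type (uniform-in-direction) second-order expansion are needed.
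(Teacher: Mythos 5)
Your proposal is correct and follows essentially the same route as the paper's own proof: both reduce the outer set to the set $\mathscr{D}$ defined by $\Pi''_{\mathbb{S}^n_+}(Z;F+G,S+T)=S$ via the equivalences $(X',Y')\in\Omega\Leftrightarrow\Pi_{\mathbb{S}^n_+}(X'+Y')=X'$ and $F=\Pi'_{\mathbb{S}^n_+}(Z;F+G)$, use the Lipschitz continuity (nonexpansiveness) of the projection to absorb the $o(t_k^2)$ discrepancy in the outer inclusion, establish $\mathscr{D}\subseteq\mathcal{T}^{i,2}_{\Omega}$ by the same parabola $Z(t)$ and the second-order expansions of $\Pi_{\mathbb{S}^n_+}$ and $\Pi_{\mathbb{S}^n_-}$, and obtain the last equality from Lemma \ref{SOdir-Proj}. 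The only cosmetic difference is that the paper phrases the outer-set argument with sequences $(S^k,T^k)\to(S,T)$ giving exact membership, whereas you project onto $\Omega$ and carry the $o(t_k^2)$ slack explicitly; these are the same argument.
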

 \begin{proof}
  The last equality follows by Lemma \ref{SOdir-Proj}. For the first two equalities,
  since $\mathcal{T}^{i,2}_{\Omega}((X,Y);(F,G))\subseteq\mathcal{T}^{2}_{\Omega}((X,Y);(F,G))$,
  it suffice to establish the following inclusions
  \begin{equation}\label{aim-inclusion}
   \mathcal{T}^{2}_{\Omega}((X,Y);(F,G))\subset\mathscr{D}\subset \mathcal{T}^{i,2}_{\Omega}((X,Y);(F,G))
  \end{equation}
  where $\mathscr{D}$ is the set in \eqref{MD}.
  By the definition of the set $\Omega$, for any $(X',Y')\in\mathbb{S}^n\times\mathbb{S}^n$,
  \begin{equation}\label{ProjSDP}
   (X',Y')\in \Omega\Longleftrightarrow \Pi_{\mathbb{S}^n_+}(X'+Y')=X'
   \Longleftrightarrow \Pi_{\mathbb{S}^n_+}(X'+Y')=Y'.
  \end{equation}
  In addition, by \cite[Theorem 3.1]{WuZZ14},
  for any $(X',Y')\in\mathbb{S}^n\times\mathbb{S}^n$
  and $(F',G')\in\mathbb{S}^n\times\mathbb{S}^n$,
  \begin{subequations}
  \begin{align}\label{tangent-cone1}
   (F',G')\in \mathcal{T}_{\Omega}(X',Y')
   &\Longleftrightarrow \Pi'_{\mathbb{S}^n_+}(X'+Y';F'+G')=F'\\
   \label{tangent-cone2}
   &\Longleftrightarrow\Pi'_{\mathbb{S}^n_-}(X'+Y';F'+G')=G'.
  \end{align}
  \end{subequations}
  Take $(S,T)\in\!\mathcal{T}^{2}_{\Omega}((X,Y);(F,G))$.
  By the definition of outer second-order tangent sets, there are sequences
  $t_k\downarrow 0$ and $(S^k,T^k)\to(S,T)$ such that
  \begin{equation*}
    (X,Y)+t_k(F,G)+\frac{1}{2}t_k^2(S^k,T^k)\in \Omega\quad\ \forall k.
  \end{equation*}
  Combining with $(X,Y)\in\Omega$ and $(F,G)\in \mathcal{T}_{\Omega}(X,Y)$
  and using \eqref{ProjSDP}-\eqref{tangent-cone2}, we obtain
  \begin{align*}
  &\Pi_{\mathbb{S}^n_+}\Big(X+Y+t_k(F+G)+\frac{1}{2}t^2_k(S^k+T^k)\Big)
    =X+t_k F+\frac{1}{2}t_k^2 S^k\\
  &=\Pi_{\mathbb{S}^n_+}(X+Y)+t_k\Pi'_{\mathbb{S}^n_+}(X+Y;F+G)+\frac{1}{2}t_k^2S^k.
  \end{align*}
  By the second-order directional differentiability and the Lipschitz
  continuity of $\Pi_{\mathbb{S}^n_+}$, from the last equality we obtain
  $\Pi''_{\mathbb{S}^n_+}(X+Y;F+G,S+T)=S$, i.e., $(S,T)\in\mathscr{D}$.
  By the arbitrariness of $(S,T)$ in $\mathcal{T}^{2}_{\Omega}((X,Y);(F,G))$,
  it follows that $\mathcal{T}^{2}_{\Omega}((X,Y);(F,G))\subset\mathscr{D}$.

  \medskip

  Next, take an arbitrary $(S,T)\in\mathscr{D}$. For any sufficiently small $t>0$, write
  \[
    Z(t):=X+Y+t(F+G)+\frac{t^2}{2}(S+T),\,X(t):=\Pi_{\mathbb{S}^n_+}(Z(t))
    \ \ {\rm and}\ \ Y(t):=\Pi_{\mathbb{S}^n_-}(Z(t)).
  \]
  Then $(X(t),Y(t))\in\Omega$. By the second-order directional differentiability
  of $\Pi_{\mathbb{S}^n_+}$ and $\Pi_{\mathbb{S}_{-}^n}$,
 \begin{subnumcases}{}\label{Xt-equa}
   X(t)-X-t\Pi_{\mathbb{S}^n_+}'(X+Y;F+G)-\frac{t^2}{2}\Pi''_{\mathbb{S}^n_+}(X+Y;F+G,S+T)
   =o(t^2),\qquad\\
   \label{Yt-equa}
   Y(t)-Y-t\Pi_{\mathbb{S}_{-}^n}'(X+Y;F+G)-\frac{t^2}{2}\Pi''_{\mathbb{S}_{-}^n}(X+Y;F+G,S+T)
   =o(t^2).\qquad
  \end{subnumcases}
  Together with $(F,G)=\big(\Pi_{\mathbb{S}^n_+}'(X+Y;F+G),\Pi_{\mathbb{S}_{-}^n}'(X+Y;F+G)\big)$
  by $(F,G)\in\mathcal{T}_{\Omega}(X,Y)$
  and $(S,T)=\big(\Pi''_{\mathbb{S}^n_+}(X+Y;F+G,S+T),\Pi''_{\mathbb{S}_{-}^n}(X+Y;F+G,S+T)\big)$,
  it holds that
  \[
    \lim_{t\downarrow 0}\frac{X(t)-X-t G-\frac{t^2}{2}S}{t^2}=0
    \ \ {\rm and}\ \
    \lim_{t\downarrow 0}\frac{Y(t)-Y-t H-\frac{t^2}{2}T}{t^2}=0.
  \]
  Since $(X(t),Y(t))\in \Omega$, this means that
  ${\rm dist}\big((X,Y)+t(F,G)+\frac{t^2}{2}(S,T),\Omega\big)=o(t^2)$,
  which is equivalent to $(S,T)\in \mathcal{T}^{i,2}_{\Omega}((X,Y);(F,G))$.
  By the arbitrariness of $(S,T)$ in the set $\mathscr{D}$, it follows that
  $\mathscr{D}\subset \mathcal{T}^{i,2}_{\Omega}((X,Y);(F,G))$.
  Thus, the first two equalities hold.
  \end{proof}

  The following theorem gives a characterization of the second-order tangent set to $\Omega$.
  \begin{theorem}\label{SO-tangent}
   Fix an arbitrary $(X,Y)\in \Omega$ and an arbitrary $(F,G)\in \mathcal{T}_{\Omega}(X,Y)$.
   Write $Z=X+Y$ and $H=F+G$. Let $Z$ have the spectral decomposition as
   in \eqref{Zdecom}-\eqref{idx-eigZ}, and let $\widetilde{H}_{a_ka_k}$
   for $k\in\{1,2,\ldots,r\}$ have the spectral decomposition as in
   \eqref{Hkdecom}-\eqref{alpk-gamk}. Then,
   $(P \widetilde{S} P^\mathbb{T},P \widetilde{T} P^\mathbb{T})
   \in\mathcal{T}^{2}_{\Omega}((X,Y);(F,G))$ if and only if
   for each $k,l\in \{1,\ldots,r\}$ with $k<l$,
  \begin{subequations}\label{TKL}
  \begin{align}
  \label{TKLResult1}
  \widetilde{T}_{a_k a_l}=-2P^{\mathbb{T}}_{a_k}H\Big[\sum\limits_{j=r_0+1}^{r} \frac{\mu_j}{(\mu_j-\mu_k)(\mu_j-\mu_l)}P_{\!a_j}P_{\!a_j}^\mathbb{T}\Big]HP_{a_l}
   \quad {\rm for}\ k<l<r_0;\quad\\
  \label{TKLResult2}
  \widetilde{T}_{a_k a_l}
   =2P^{\mathbb{T}}_{a_k}H\!\sum\limits_{j=r_0+1}^{r}\frac{P_{\!a_j}P_{\!a_j}^\mathbb{T}}{\mu_k-\mu_j}HP_{a_l}
    -\frac{2}{\mu_k}\widetilde{H}_{a_k a_l} \Pi_{\mathbb{S}^{|a_l|}_-}(\widetilde{H}_{a_l a_l})
  \quad {\rm for}\ k<l=r_0; \\
  \label{TKLResult3}
  \widetilde{S}_{a_k a_l}=2P^{\mathbb{T}}_{a_k}H\!\sum\limits_{j=1}^{r_0-1} \frac{P_{\!a_j}P_{\!a_j}^\mathbb{T}}{(\mu_j-\mu_l)}HP_{a_l}
  -\frac{2}{\mu_l}\Pi_{\mathbb{S}^{|a_l|}_+}(\widetilde{H}_{a_k a_k})\widetilde{H}_{a_k a_l}
  \quad{\rm for}\ k=r_0<l; \\
  \label{TKLResult4}
  \widetilde{S}_{a_k a_l}=2P^{\mathbb{T}}_{a_k}H\Big[\sum\limits_{j=1}^{r_0-1} \frac{\mu_j}{(\mu_j-\mu_k)(\mu_j-\mu_l)}P_{\!a_j}P_{\!a_j}^\mathbb{T}\Big]HP_{a_l}
  \quad{\rm for}\ r_0<k<l; \qquad\\
  \label{TKLResult5}
  \frac{\mu_l\widetilde{S}_{a_k a_l}}{\mu_k-\mu_l}+ \frac{\mu_k\widetilde{T}_{a_ka_l}}{\mu_k-\mu_l}
   +2I^\mathbb{T}_{a_k} \widetilde{H}G^{[2]}_{kl}(\Lambda(Z))\widetilde{H}I_{a_l}+
   \frac{2\widetilde{H}_{a_k a_k}\widetilde{H}_{a_k a_l}}{\mu_k-\mu_l}=0
  \quad{\rm for}\ k<r_0<l
  \end{align}
  \end{subequations}
  where $I^\mathbb{T}_{a_k} \widetilde{H}G^{[2]}_{kl}(\Lambda(Z))\widetilde{H}I_{a_l}$
  is same as the one in \eqref{klr05}, and $(\widetilde{S}_{a_ka_k},\widetilde{T}_{a_k a_k})$
  satisfies
  \begin{subequations}\label{TKK}
   \begin{align}\label{TKKResult1}
   \widetilde{T}_{a_k a_k}=2P^{\mathbb{T}}_{a_k}H\Big[\sum_{j=r_0+1}^{r} \frac{\mu_j}{(\mu_j-\mu_k)^2}P_{a_j}P_{a_j}^\mathbb{T}\Big]HP_{a_k}
   \quad {\rm for}\ k<r_0;\qquad\qquad\qquad\quad \\
   \label{TKKResult3}
   \widetilde{S}_{a_k a_k}=2P^{\mathbb{T}}_{a_k}H\Big[\sum_{j=1}^{r_0-1} \frac{\mu_j}{(\mu_j-\mu_k)^2}P_{a_j}P_{a_j}^\mathbb{T}\Big]HP_{a_k}
   \quad {\rm for}\ k>r_0;\qquad\qquad\qquad\qquad \\
   \label{TKKResult2}
   \!\left\{\begin{array}{ll}
   \widehat{S}_{a_ka_k}:=(Q^k)^{\mathbb{T}}\widetilde{S}_{a_ka_k}Q^k
   =\left[\begin{matrix}
   (\widehat{S}_{a_ka_k})_{\alpha^k\alpha^k} & (\widehat{S}_{a_ka_k})_{\alpha^k\beta^k} & (\widehat{S}_{a_ka_k})_{\alpha^k\gamma^k} \\
   *  & (\widehat{S}_{a_ka_k})_{\beta^k\beta^k} & \Delta^+_{\beta^k\gamma^k}\\
   * & * & \Delta^+_{\gamma^k\gamma^k}
   \end{matrix}\right],\\
   \widehat{T}_{a_ka_k}:=(Q^k)^{\mathbb{T}}\widetilde{T}_{a_ka_k}Q^k
   =\left[\begin{matrix}
   \Delta^{-}_{\alpha^k\alpha^k} & \Delta^{-}_{\alpha^k\beta^k} &(\widehat{T}_{a_ka_k})_{\alpha^k\gamma^k} \\
   *  & (\widehat{T}_{a_ka_k})_{\beta^k\beta^k} & (\widehat{T}_{a_ka_k})_{\beta^k\gamma^k}\\
   * & * & (\widehat{T}_{a_ka_k})_{\gamma^k\gamma^k}
   \end{matrix}\right],\quad{\rm for}\ k=r_0\\
  (\Sigma_{\alpha^k\gamma^k}\!-\!E_{\alpha^k\gamma^k})\!\circ\!
  (\widehat{S}_{a_ka_k}\!-\!\Delta^+)_{\alpha^k\gamma^k}
   +\Sigma_{\alpha^k\gamma^k}\!\circ\!(\widehat{T}_{a_ka_k}-\Delta^-)_{\alpha^k\gamma^k}\!=0,\\
   (\widehat{S}_{a_ka_k}-\Delta^+)_{\beta^k\beta^k}=\Pi_{\mathbb{S}_{+}^{|\beta^k|}}
   \big[(\widehat{S}_{a_ka_k}+\widehat{T}_{a_ka_k}
   -\Delta^+-\Delta^-)_{\beta^k\beta^k}\big]
   \end{array}\right.
  \end{align}
  with $\Delta^{+}\!:=2(Q^k)^{\mathbb{T}}P^{\mathbb{T}}_{a_k}H\!{\displaystyle\sum_{j=1}^{r_0-1}}\frac{P_{a_j}
   P^{\mathbb{T}}_{a_j}}{\mu_j}HP_{a_k}Q^k$ and
  $\Delta^{-}\!:=2(Q^k)^{\mathbb{T}}P^{\mathbb{T}}_{a_k}H\!{\displaystyle\sum_{j=r_0+1}^{r}}\!\frac{P_{a_j}
  P^{\mathbb{T}}_{a_j}}{\mu_j}HP_{a_k}Q^k$.
  \end{subequations}
  \end{theorem}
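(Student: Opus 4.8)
The plan is to collapse the geometric membership in $\mathcal{T}^{2}_{\Omega}((X,Y);(F,G))$ into a single matrix equation and then read that equation off block by block in the eigenbasis of $Z$. By Proposition \ref{SOTangent}, $(P\widetilde{S}P^{\mathbb{T}},P\widetilde{T}P^{\mathbb{T}})\in\mathcal{T}^{2}_{\Omega}((X,Y);(F,G))$ if and only if $\Pi''_{\mathbb{S}^n_+}(Z;H,S+T)=S$, where $S=P\widetilde{S}P^{\mathbb{T}}$, $T=P\widetilde{T}P^{\mathbb{T}}$ and $W:=S+T$. Since $\Pi_{\mathbb{S}^n_+}$ is the L\"{o}wner operator of $f(t)=\max(0,t)$, which is second-order directionally differentiable by Lemma \ref{SOdir-Proj}, Lemma \ref{ReviseTheo} yields $\Pi''_{\mathbb{S}^n_+}(Z;H,W)=PBP^{\mathbb{T}}$ with $B$ assembled from the blocks computed in Propositions \ref{BKL} and \ref{BKK}. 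Conjugating by the orthogonal $P$, the equation $\Pi''_{\mathbb{S}^n_+}(Z;H,W)=S$ is equivalent to $B=\widetilde{S}$, that is, to the block identities $B_{a_ka_l}=\widetilde{S}_{a_ka_l}$ for every pair $(k,l)$; note also that $W=S+T$ gives $\widetilde{W}_{a_ka_l}=\widetilde{S}_{a_ka_l}+\widetilde{T}_{a_ka_l}$. Thus the theorem reduces to rewriting these block identities in the asserted forms, each rewriting being an equivalence.

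For the off-diagonal blocks with $k<l$ I would substitute the five cases \eqref{klr01}--\eqref{klr05} of Proposition \ref{BKL} into $B_{a_ka_l}=\widetilde{S}_{a_ka_l}$ and replace $\widetilde{W}_{a_ka_l}$ by $\widetilde{S}_{a_ka_l}+\widetilde{T}_{a_ka_l}$. When $k<l<r_0$ or $k<l=r_0$ the Hadamard factor multiplying $\widetilde{W}_{a_ka_l}$ is the all-ones block, so $\widetilde{S}_{a_ka_l}$ cancels and one solves for $\widetilde{T}_{a_ka_l}$, producing \eqref{TKLResult1} and \eqref{TKLResult2}. When $k=r_0<l$ or $r_0<k<l$ the relevant $g^{[1]}$-block and $\Phi$-terms vanish, so no $\widetilde{W}$-term survives and the identity reads off $\widetilde{S}_{a_ka_l}$ directly, giving \eqref{TKLResult3} and \eqref{TKLResult4}. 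In the case $k<r_0<l$ the Hadamard factor equals the constant $\mu_k/(\mu_k-\mu_l)$ on the block $a_ka_l$, so the identity becomes $\widetilde{S}_{a_ka_l}=\frac{\mu_k}{\mu_k-\mu_l}(\widetilde{S}_{a_ka_l}+\widetilde{T}_{a_ka_l})+2I^{\mathbb{T}}_{a_k}\widetilde{H}G^{[2]}_{kl}(\Lambda(Z))\widetilde{H}I_{a_l}+\frac{2\widetilde{H}_{a_ka_k}\widetilde{H}_{a_ka_l}}{\mu_k-\mu_l}$, and collecting the $\widetilde{S}$- and $\widetilde{T}$-terms over the denominator $\mu_k-\mu_l$ gives exactly the linear relation \eqref{TKLResult5}.

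The diagonal blocks come from Proposition \ref{BKK}. For $k<r_0$ the factor on $\widetilde{W}_{a_ka_k}$ is $1$, so $\widetilde{S}_{a_ka_k}$ cancels and I obtain \eqref{TKKResult1}; for $k>r_0$ there is no $\widetilde{W}$-term and \eqref{TKKResult3} is immediate. The crux is $k=r_0$, where $\mu_{r_0}=0$ forces $(Z-\mu_{r_0}I)^{\dagger}=Z^{\dagger}=\sum_{j\neq r_0}\mu_j^{-1}P_{a_j}P_{a_j}^{\mathbb{T}}$; splitting this sum over $j<r_0$ and $j>r_0$ and conjugating by $Q^k$ identifies the two pieces with $\Delta^{+}$ and $\Delta^{-}$, so that $\widehat{V}_{k}(H,W)=\widehat{S}_{a_ka_k}+\widehat{T}_{a_ka_k}-\Delta^{+}-\Delta^{-}$. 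Writing $B_{a_ka_k}=\widetilde{S}_{a_ka_k}$ in the $Q^k$-basis and matching the six blocks indexed by $(\alpha^k,\beta^k,\gamma^k)$ then gives: the $\gamma^k\gamma^k$ and $\beta^k\gamma^k$ blocks pin $\widehat{S}_{a_ka_k}$ to $\Delta^{+}$; the $\alpha^k\alpha^k$ and $\alpha^k\beta^k$ blocks pin $\widehat{T}_{a_ka_k}$ to $\Delta^{-}$; the $\beta^k\beta^k$ block becomes the projection identity $(\widehat{S}_{a_ka_k}-\Delta^{+})_{\beta^k\beta^k}=\Pi_{\mathbb{S}^{|\beta^k|}_+}[(\widehat{S}_{a_ka_k}+\widehat{T}_{a_ka_k}-\Delta^{+}-\Delta^{-})_{\beta^k\beta^k}]$; and the $\alpha^k\gamma^k$ block, after grouping the $\Sigma_{\alpha^k\gamma^k}$- and $(E_{\alpha^k\gamma^k}-\Sigma_{\alpha^k\gamma^k})$-weighted parts, rearranges to the Hadamard coupling equation in \eqref{TKKResult2}. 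The remaining blocks are precisely the free ones listed in \eqref{TKKResult2}.

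I expect the $k=l=r_0$ block to be the main obstacle, since it is the only place where the parabolic second derivative couples $\widehat{S}$ and $\widehat{T}$ nonsmoothly, through both the Hadamard multiplier $\Sigma_{\alpha^k\gamma^k}$ and the projection $\Pi_{\mathbb{S}^{|\beta^k|}_+}$, and the bookkeeping of the $\Delta^{\pm}$ contributions coming from $Z^{\dagger}$ must be tracked carefully to separate the fully determined blocks from the coupled and free ones. All other blocks are routine substitutions of Propositions \ref{BKL}--\ref{BKK}. Finally, because each block manipulation is an equivalence, reversing them shows that any $(\widetilde{S},\widetilde{T})$ satisfying \eqref{TKL}--\eqref{TKK} yields $B=\widetilde{S}$, hence $\Pi''_{\mathbb{S}^n_+}(Z;H,S+T)=S$, which by Proposition \ref{SOTangent} is membership in $\mathcal{T}^{2}_{\Omega}((X,Y);(F,G))$, giving the converse direction and completing the characterization.
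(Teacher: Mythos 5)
Your proposal is correct and takes essentially the same route as the paper's own proof: invoke Proposition \ref{SOTangent} to reduce membership in $\mathcal{T}^{2}_{\Omega}((X,Y);(F,G))$ to the single equation $\Pi''_{\mathbb{S}^n_+}(Z;H,S+T)=S$, then apply Lemma \ref{ReviseTheo} and Propositions \ref{BKL}--\ref{BKK} with $W=S+T$ and match blocks. Your explicit block-by-block bookkeeping---including the identification $\widehat{V}_{k}=\widehat{S}_{a_ka_k}+\widehat{T}_{a_ka_k}-\Delta^{+}-\Delta^{-}$ via $Z^{\dagger}$ at $k=r_0$ and the rearrangements yielding \eqref{TKLResult5} and the $\alpha^k\gamma^k$ coupling in \eqref{TKKResult2}---simply spells out the ``elementary calculation'' the paper leaves to the reader.
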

  \begin{proof}
  By Proposition \ref{SOTangent}, $(S,T)\in \mathcal{T}^{2}_{\Omega}((X,Y);(F,G))$
  iff $\Pi''_{\mathbb{S}^n_+}(Z;H,S+T)=S$.
  By invoking Lemma \ref{ReviseTheo} and Proposition \ref{BKL}-\ref{BKK} with $W=S+T$,
  it follows that
  \[
    S=\Pi''_{\mathbb{S}^n_+}(Z;H,W)=P B P^{T}
  \]
  where $B_{a_ka_l}$ for $k,l\in\{1,\ldots,r\}$ has the form
  in \eqref{klr01}-\eqref{klr05} or \eqref{BKKResult1}-\eqref{BKKResult3}.
  An elementary calculation shows that
  $(S,T)\in \mathcal{T}^{2}_{\Omega}((X,Y);(F,G))$ iff $(S,T)$ satisfies \eqref{TKL} and \eqref{TKK}.
 \end{proof}

  Theorem \ref{SO-tangent} provides an exact characterization for the second-order
  tangent set to the SDCC set $\Omega$. For some special $(X,Y)\in\Omega$ and
  $(F,G)\in\mathcal{T}_{\Omega}(X,Y)$, Lemma \ref{Maincor} in Appendix illustrates
  the structure of the corresponding $\mathcal{T}_{\Omega}^2((X,Y);(F,G))$.

 \section{Second-order optimality conditions for SDCMPCC}\label{sec5}

  We shall derive the second-order optimality conditions of \eqref{SDCMPCC}
  in terms of the second-order tangent set to $\Omega$.
  For convenience, write $\Upsilon(x):=(h(x);\Theta(x))$ for $x\in\mathbb{X}$
  and denote by $\mathcal{S}$ the feasible set of \eqref{SDCMPCC}.
  For a given $x\in\mathcal{S}$, the critical cone of \eqref{ESDCMPCC} at $x$
  takes the form of
  \[
   \mathcal{C}(x):=\Big\{d\in \mathbb{X}\ |\ \varphi'(x)d\le 0,\,\Upsilon'(x)d\in\mathcal{T}_{K\times\Omega}(\Upsilon(x))\Big\}.
  \]
  Let $L\!:\mathbb{X}\times\mathbb{Y}\times(\mathbb{S}^n\times\mathbb{S}^n)\to\mathbb{R}$
  denote the Lagrange function of \eqref{ESDCMPCC}, which is defined as
  \[
    L(x,\xi,\Gamma):=\varphi(x)+\langle \xi,h(x)\rangle+\langle \Gamma,\Theta(x)\rangle
    =\varphi(x)+\langle(\xi,\Gamma),\Upsilon(x)\rangle.
  \]
  For a given $x\in\mathcal{S}$, the following multiplier sets of \eqref{ESDCMPCC}
  associated to $x$ are needed:
  \begin{subequations}
  \begin{align}\label{Tanget-Mset}
   \mathcal{M}^{c}(x)&:=\Big\{(\xi,\Gamma)\in\mathbb{Y}\times(\mathbb{S}_{+}^n\times\mathbb{S}_{+}^n)
   \ |\ \nabla_x L(x,\xi,\Gamma)=0,\,(\xi,\Gamma)\in\mathcal{N}_{K\times\Omega}^c(\Upsilon(x))\Big\},\\
   \label{Limit-Mset}
   \mathcal{M}(x)&:=\Big\{(\xi,\Gamma)\in\mathbb{Y}\times(\mathbb{S}_{+}^n\times\mathbb{S}_{+}^n)
   \ |\ \nabla_x L(x,\xi,\Gamma)=0,\,(\xi,\Gamma)\in\mathcal{N}_{K\times\Omega}(\Upsilon(x))\Big\},\\
   \label{Regular-Mset}
   \widehat{\mathcal{M}}(x)&:=\Big\{(\xi,\Gamma)\in\mathbb{Y}\times(\mathbb{S}_{+}^n\times\mathbb{S}_{+}^n)
   \ |\ \nabla_x L(x,\xi,\Gamma)=0,\, (\xi,\Gamma)\in\widehat{\mathcal{N}}_{K\times\Omega}(\Upsilon(x))\Big\}.
   \end{align}
 \end{subequations}
  Clearly, $\widehat{\mathcal{M}}(x)\subset \mathcal{M}(x)\subset \mathcal{M}^c(x)$
  and the multiplier sets $\widehat{\mathcal{M}}(x)$ and $\mathcal{M}^c(x)$ are convex.
  \begin{theorem}\label{SONC}
   Let $x^*$ be a locally optimal solution of \eqref{SDCMPCC}.
   Suppose that the multifunction $\mathcal{F}(z)\!:=\Upsilon(z)-K\times\Omega$
   is metrically subregular at $x^*$ for the origin.
   Then, $\mathcal{M}(x^*)\ne\emptyset$, and moreover, for any $d\in\mathcal{C}(x^*)$
   and any nonempty convex set $T(d)\subseteq\mathcal{T}^2_{K\times\Omega}(\Upsilon(x^*);\Upsilon'(x^*)d)$
   with ${\rm ri}\big[T(d)+{\mathcal{T}}^c_{K\times\Omega}(\Upsilon(x^*))\big]
   \cap\big[{\rm range}(\Upsilon'(x^*))\!+\!\nabla^2\Upsilon(x^*)(d,d)\big]\ne\emptyset$,
   it holds that
   \[
    \sup_{(\xi,\Gamma)\in\mathcal{M}^{c}(x^*)}\Big\{\langle d,\nabla^2_{xx}L(x^*,\xi,\Gamma)d\rangle
    -\sigma((\xi,\Gamma)\,|\,T(d))\Big\}\geq 0.
   \]
  \end{theorem}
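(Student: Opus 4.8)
The plan is to dualize the second-order variation of $\varphi$ along feasible parabolic arcs into the multiplier form, using the image-space translation furnished by Lemma \ref{MSCQ-result} together with a Rockafellar-type convex duality. First I would settle the first-order content. Since $\mathcal{F}$ is metrically subregular at $x^*$ for the origin, the estimate ${\rm dist}(x,\mathcal{S})\le\kappa\,{\rm dist}(\Upsilon(x),K\times\Omega)$ holds on a full neighborhood of $x^*$, so Lemma \ref{MSCQ-result} applies in every direction. Local optimality gives $-\varphi'(x^*)\in\widehat{\mathcal{N}}_{\mathcal{S}}(x^*)$, while subregularity yields the inclusion $\widehat{\mathcal{N}}_{\mathcal{S}}(x^*)\subseteq\Upsilon'(x^*)^{*}\mathcal{N}_{K\times\Omega}(\Upsilon(x^*))$; hence there is $(\xi,\Gamma)\in\mathcal{N}_{K\times\Omega}(\Upsilon(x^*))$ with $\nabla_xL(x^*,\xi,\Gamma)=0$, proving $\mathcal{M}(x^*)\neq\emptyset$ and, since $\mathcal{M}(x^*)\subseteq\mathcal{M}^c(x^*)$, that the supremum ranges over a nonempty set. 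For $d\in\mathcal{C}(x^*)$, Lemma \ref{MSCQ-result}(i) gives $d\in\mathcal{T}_{\mathcal{S}}(x^*)$, so local optimality forces $\varphi'(x^*)d\ge0$; combined with $\varphi'(x^*)d\le0$ from the definition of $\mathcal{C}(x^*)$, this yields $\varphi'(x^*)d=0$.

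Next I would derive the basic second-order inequality. Fix $d\in\mathcal{C}(x^*)$ and take any $w\in\mathcal{T}^2_{\mathcal{S}}(x^*;d)$, with a feasible arc $x^*+t_kd+\tfrac12 t_k^2w^k\in\mathcal{S}$, $t_k\downarrow0$, $w^k\to w$. A second-order Taylor expansion of $\varphi$, the inequality $\varphi(x^*+t_kd+\tfrac12t_k^2w^k)\ge\varphi(x^*)$, and $\varphi'(x^*)d=0$ give, after dividing by $\tfrac12 t_k^2$ and passing to the limit, $\langle d,\nabla^2\varphi(x^*)d\rangle+\varphi'(x^*)w\ge0$. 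By Lemma \ref{MSCQ-result}(ii) the condition $w\in\mathcal{T}^2_{\mathcal{S}}(x^*;d)$ is equivalent to $\Upsilon'(x^*)w+\nabla^2\Upsilon(x^*)(d,d)\in\mathcal{T}^2_{K\times\Omega}(\Upsilon(x^*);\Upsilon'(x^*)d)$. Moreover, for any closed set one has the recession stability $\mathcal{T}^2+\mathcal{T}^c\subseteq\mathcal{T}^2$ (project the parabolic arc onto the set and move along a Clarke tangent direction with step $\tfrac12 t_k^2$), whence $T(d)+\mathcal{T}^c_{K\times\Omega}(\Upsilon(x^*))\subseteq\mathcal{T}^2_{K\times\Omega}(\Upsilon(x^*);\Upsilon'(x^*)d)$. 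Consequently the displayed inequality holds for every $w$ with $\Upsilon'(x^*)w+\nabla^2\Upsilon(x^*)(d,d)\in T(d)+\mathcal{T}^c_{K\times\Omega}(\Upsilon(x^*))$.

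Finally I would dualize. Consider the convex program of minimizing $\varphi'(x^*)w$ subject to $\Upsilon'(x^*)w+\nabla^2\Upsilon(x^*)(d,d)\in T(d)+\mathcal{T}^c_{K\times\Omega}(\Upsilon(x^*))$, whose constraint set is convex (as $T(d)$ is convex and $\mathcal{T}^c_{K\times\Omega}$ is a convex cone) and whose value is $\ge-\langle d,\nabla^2\varphi(x^*)d\rangle$ by the previous step. Its Lagrangian dual reads $\sup\{\langle(\xi,\Gamma),\nabla^2\Upsilon(x^*)(d,d)\rangle-\sigma((\xi,\Gamma)\,|\,T(d)+\mathcal{T}^c_{K\times\Omega}(\Upsilon(x^*)))\}$ over $(\xi,\Gamma)$ with $\nabla_xL(x^*,\xi,\Gamma)=0$, and the stated relative-interior hypothesis is exactly the qualification ensuring no duality gap. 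Using $\sigma(\cdot\,|\,A+B)=\sigma(\cdot\,|\,A)+\sigma(\cdot\,|\,B)$, the polarity $[\mathcal{T}^c_{K\times\Omega}(\Upsilon(x^*))]^\circ=\mathcal{N}^c_{K\times\Omega}(\Upsilon(x^*))$, and that the support function of a cone is the indicator of its polar, finiteness of the dual objective forces $(\xi,\Gamma)\in\mathcal{N}^c_{K\times\Omega}(\Upsilon(x^*))$ and then $\sigma((\xi,\Gamma)\,|\,\mathcal{T}^c_{K\times\Omega}(\Upsilon(x^*)))=0$; thus the dual restricts to $\mathcal{M}^c(x^*)$ with $\sigma(\cdot\,|\,T(d)+\mathcal{T}^c)=\sigma(\cdot\,|\,T(d))$. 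Combining strong duality with the lower bound and substituting the identity $\langle(\xi,\Gamma),\nabla^2\Upsilon(x^*)(d,d)\rangle=\langle d,\nabla^2_{xx}L(x^*,\xi,\Gamma)d\rangle-\langle d,\nabla^2\varphi(x^*)d\rangle$ gives the asserted inequality.

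The hard part will be the third step: certifying the no-gap strong duality for the convex program and verifying that every dual-feasible multiplier contributing a finite value indeed lies in $\mathcal{M}^c(x^*)$. This is precisely where the relative-interior condition on $T(d)+\mathcal{T}^c_{K\times\Omega}(\Upsilon(x^*))$ is consumed; by contrast, the recession stability of the second-order tangent set, although indispensable, is routine.
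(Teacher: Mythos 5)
Your proposal is correct and follows essentially the same route as the paper's proof: multiplier existence via the subregularity-based normal-cone inclusion, the primal second-order inequality via Lemma \ref{MSCQ-result}(i)--(ii) combined with the recession property $\mathcal{T}^2_{K\times\Omega}+\mathcal{T}^c_{K\times\Omega}\subseteq\mathcal{T}^2_{K\times\Omega}$, and Rockafellar's relative-interior criterion to close the duality gap for the convex program over $T(d)+\mathcal{T}^c_{K\times\Omega}(\Upsilon(x^*))$. The only cosmetic deviations are that the paper works with the closed set $\overline{T}(d)={\rm cl}\big[T(d)+\mathcal{T}^c_{K\times\Omega}(\Upsilon(x^*))\big]$ and restricts the dual to $\mathcal{M}^c(x^*)$ by a direct unboundedness argument on the support function rather than your $\sigma(\cdot\,|\,A+B)=\sigma(\cdot\,|\,A)+\sigma(\cdot\,|\,B)$ calculus, neither of which changes the substance.
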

 \begin{proof}
  Since $x^*$ is a locally optimal solution of the problem \eqref{SDCMPCC}, it holds that
  \begin{equation}\label{fo-optimality}
    0\in\widehat{\partial}(\varphi+\delta_{K\times\Omega}\circ\Upsilon)(x^*)
    =\nabla\varphi(x^*)+\widehat{\partial}(\delta_{K\times\Omega}\circ\Upsilon)(x^*)
    \subseteq \nabla\varphi(x^*)+\partial(\delta_{K\times\Omega}\circ\Upsilon)(x^*).
  \end{equation}
  Since $\mathcal{F}$ is metrically subregular at $x^*$ for the origin,
  from \cite[Page 211]{Ioffe08} it follows that
  \[
    \partial(\delta_{K\times\Omega}\circ\Upsilon)(x^*)
    \subseteq \nabla\Upsilon(x^*)\mathcal{N}_{K\times\Omega}(\Upsilon(x^*)).
  \]
  The last two equations imply that there exists
  $(\xi^*,\Gamma^*)\in\mathcal{N}_{K\times\Omega}(\Upsilon(x^*))$
  such that
  \[
    0=\nabla\varphi(x^*)+\nabla\Upsilon(x^*)(\xi^*;\Gamma^*).
  \]
  That is, $(\xi^*,\Gamma^*)\in\mathcal{M}(x^*)$,
  and the set $\mathcal{M}(x^*)$ is nonempty. The first part follows.

  \medskip

  For the second part, fix an arbitrary $d\in\mathcal{C}(x^*)$ and let
  $\overline{T}(d):={\rm cl}\big[T(d)+{\mathcal{T}}^c_{K\times\Omega}(\Upsilon(x^*))\big]$.
  Obviously, the set $\overline{T}(d)$ is a nonempty closed and convex set.
  By \cite[Proposition 3.12]{RW98},
  \begin{equation}\label{ctangent-equa}
   {\mathcal{T}}^2_{K\times\Omega}(\Upsilon(x^*);\Upsilon'(x^*)d)
   +{\mathcal{T}}^c_{K\times\Omega}(\Upsilon(x^*))
   ={\mathcal{T}}^2_{K\times\Omega}(\Upsilon(x^*);\Upsilon'(x^*)d).
  \end{equation}
   Together with $T(d)\subset {\mathcal{T}}^2_{K\times\Omega}(\Upsilon(x^*);\Upsilon'(x^*)d)$
   and the closedness of ${\mathcal{T}}^2_{K\times\Omega}(\Upsilon(x^*);\Upsilon'(x^*)d)$,
   we have $\overline{T}(d)\subset {\mathcal{T}}^2_{K\times\Omega}(\Upsilon(x^*);\Upsilon'(x^*)d)$.
   Since $x^*$ be a locally optimal solution of \eqref{SDCMPCC},
   by Lemma \ref{MSCQ-result}(i) and the definition of the second-order tangent set,
   it is not hard to obtain
   \[
    \varphi'(x^*)w+\nabla^2 \varphi(x^*)(d,d)\geq 0\quad\ \forall\,d\in\mathcal{C}(x^*), w\in {\mathcal{T}}^2_{\mathcal{S}}(x^*,d).
   \]
   By Lemma \ref{MSCQ-result}(ii), $w\in {\mathcal{T}}^2_{\mathcal{S}}(x^*,d)$
   iff $\Upsilon'(x^*)w+\nabla^2\Upsilon(x^*)(d,d)\in {\mathcal{T}}^2_{K\times\Omega}(\Upsilon(x^*);\Upsilon'(x^*)d)$.
   Therefore, for any  $d\in\mathcal{C}(x^*)$, the following problem has a nonnegative optimal value:
   \begin{align*}
    &\inf_{w\in\mathbb{X}}\ \varphi'(x^*)w+\nabla^2 \varphi(x^*)(d,d) \\
    &\ {\rm s.t.}\ \ \Upsilon'(x^*)w+\nabla^2\Upsilon(x^*)(d,d)\in {\mathcal{T}}^2_{K\times\Omega}(\Upsilon(x^*);\Upsilon'(x^*)d).
   \end{align*}
   Recall that $\overline{T}(d)\subset {\mathcal{T}}^2_{K\times\Omega}(\Upsilon(x^*);\Upsilon'(x^*)d)$.
   The following convex minimization problem
   \begin{align}\label{ConvexPro}
   &\inf_{w\in\mathbb{X}}\ \varphi'(x^*)w+\nabla^2 \varphi(x^*)(d,d) \nonumber\\
   &\ {\rm s.t.}\ \ \Upsilon'(x^*)w+\nabla^2\Upsilon(x^*)(d,d)\in \overline{T}(d)
   \end{align}
   has a nonnegative optimal value. An elementary calculation yields
   the dual of \eqref{ConvexPro} as
   \begin{equation}\label{dual1}
    \max_{(\xi,\Gamma)\in\mathbb{Y}\times\mathbb{S}^n\times\mathbb{S}^n}
    \Big\{\langle d,\nabla^2_{xx}L(x^*,\xi,\Gamma)d\rangle-\sigma((\xi,\Gamma)\,|\,\overline{T}(d))
    \ \ {\rm s.t.}\ \ \nabla_xL(x^*,\xi,\Gamma)=0\Big\}.
   \end{equation}
   It is not hard to argue that
   $\sigma((\xi,\Gamma)\,|\,\overline{T}(d))=+\infty$ for all
   $(\xi,\Gamma)\notin\mathcal{N}_{K\times\Omega}^c(\Upsilon(x^*))$. (Indeed,
   by fixing an arbitrary $(\xi,\Gamma)\notin\mathcal{N}_{K\times\Omega}^c(\Upsilon(x^*))$,
   there exists $(\eta,W)\in{\mathcal{T}}^c_{K\times\Omega}(\Upsilon(x^*))$ such that
   $\langle(\xi,\Gamma),(\eta,W)\rangle>0$. Take an $(\eta^0,S^0)\in T(d)$
   since $T(d)\ne\emptyset$. Clearly, for any $k>0$, $(\eta^0,S^0)+k(\eta,W)\in\overline{T}(d)$.
   Then, $\sigma((\xi,\Gamma)\,|\,\overline{T}(d))\ge\langle(\xi,\Gamma),(\eta^0,S^0)+k(\eta,W)\rangle$,
  which implies that $\sigma((\xi,\Gamma)\,|\,\overline{T}(d))=+\infty$.)
  So, the problem \eqref{dual1} can be equivalently written as
  \begin{equation}\label{dual2}
   \sup_{(\xi,\Gamma)\in\mathcal{M}^{c}(x^*)}\Big\{\langle d,\nabla^2_{xx}L(x^*,\xi,\Gamma)d\rangle
   -\sigma((\xi,\Gamma)\,|\,\overline{T}(d))\Big\}.
  \end{equation}
  Since $\big[{\rm range}(\Upsilon'(x^*))+\!\nabla^2\Upsilon(x^*)(d,d)\big]\cap{\rm ri}\big[\overline{T}(d)\big]\ne\emptyset$ by the given condition,
  there is no dual gap between \eqref{ConvexPro} and its dual \eqref{dual2}
  by \cite[Corollary 31.2.1(a)]{Roc70}, which means that
  \[
   \sup_{(\xi,\Gamma)\in\mathcal{M}^{c}(x^*)}\Big\{\langle d,\nabla^2_{xx}L(x^*,\xi,\Gamma)d\rangle
   -\sigma((\xi,\Gamma)\,|\,\overline{T}(d))\Big\}\geq 0.
  \]
  This, together with $T(d)\subseteq\overline{T}(d)$, implies the second part of the conclusions.
 \end{proof}

 When the subregularity of $\mathcal{F}$ in Theorem \ref{SONC} is respectively
 strengthened to be the metric regularity and constraint nondegeneracy,
 we have the following corollaries.
 \begin{corollary}\label{SONC1}
  Let $x^*$ be a locally optimal solution of the problem \eqref{ESDCMPCC}. If
  the multifunction $\mathcal{F}$ in Theorem \ref{SONC} is
  metrically regular at $x^*$ for the origin, then for any $d\in\mathcal{C}(x^*)$
  and any nonempty convex set $\Sigma(d)\subseteq\mathcal{T}^2_{\Omega}(\Theta(x^*);\Theta'(x^*)d)$
  it holds that
  \[
   \sup_{(\xi,\Gamma)\in\mathcal{M}^{c}(x^*)}\Big\{\langle d,\nabla^2_{xx}L(x^*,\xi,\Gamma)d\rangle
   -\sigma\big((\xi,\Gamma)\,|\,\mathcal{T}^2_{K}(h(x^*);h'(x^*)d)\times\Sigma(d)\big)\!\Big\}\ge 0.
  \]
 \end{corollary}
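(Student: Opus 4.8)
The plan is to deduce this corollary from Theorem~\ref{SONC} by specializing the free convex set $T(d)$ to the product $T(d):=\mathcal{T}^2_K(h(x^*);h'(x^*)d)\times\Sigma(d)$, and then showing that, under metric regularity, (a) this product sits inside $\mathcal{T}^2_{K\times\Omega}(\Upsilon(x^*);\Upsilon'(x^*)d)$ and (b) the relative-interior transversality condition required in Theorem~\ref{SONC} is automatic, so that nothing beyond metric regularity need be assumed. Since metric regularity of $\mathcal{F}$ at $x^*$ for the origin implies its subregularity there, the conclusion $\mathcal{M}(x^*)\ne\emptyset$ and the entire apparatus of Theorem~\ref{SONC} remain available; it therefore suffices to verify the two structural facts and to read off the support function of the product as the one claimed.

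First I would establish the product rule $\mathcal{T}^2_{K\times\Omega}(\Upsilon(x^*);\Upsilon'(x^*)d)=\mathcal{T}^2_K(h(x^*);h'(x^*)d)\times\mathcal{T}^2_\Omega(\Theta(x^*);\Theta'(x^*)d)$. The distance to a Cartesian product decouples into the two coordinate distances, which yields at once $\mathcal{T}^{i,2}_{K\times\Omega}=\mathcal{T}^{i,2}_K\times\mathcal{T}^{i,2}_\Omega$ and $\mathcal{T}^2_{K\times\Omega}\subseteq\mathcal{T}^2_K\times\mathcal{T}^2_\Omega$, with the reverse inclusion for the outer sets obstructed only by the need for a common sequence $t_k$. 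That obstruction disappears here because both factors are parabolically derivable: $K$ is second-order regular by standing assumption, so $\mathcal{T}^{i,2}_K=\mathcal{T}^2_K$, while $\mathcal{T}^{i,2}_\Omega=\mathcal{T}^2_\Omega$ by Proposition~\ref{SOTangent}. Sandwiching the three sets forces equality, and the product rule follows. With it in hand, $T(d)$ is nonempty (its factor $\mathcal{T}^2_K$ being nonempty because $K$ is second-order regular and $h'(x^*)d\in\mathcal{T}_K(h(x^*))$, and $\Sigma(d)\ne\emptyset$ by hypothesis), convex (its factor $\mathcal{T}^2_K$ being convex because $K$ is convex, and $\Sigma(d)$ being convex by hypothesis), and contained in $\mathcal{T}^2_{K\times\Omega}(\Upsilon(x^*);\Upsilon'(x^*)d)$ because $\Sigma(d)\subseteq\mathcal{T}^2_\Omega(\Theta(x^*);\Theta'(x^*)d)$.

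The crux is to show that metric regularity makes the relative-interior condition of Theorem~\ref{SONC} hold for free. I would use metric regularity in the Robinson-type surjectivity form ${\rm range}(\Upsilon'(x^*))+\mathcal{T}^c_{K\times\Omega}(\Upsilon(x^*))=\mathbb{Y}\times\mathbb{S}^n\times\mathbb{S}^n$. Writing $\overline{T}(d):={\rm cl}[T(d)+\mathcal{T}^c_{K\times\Omega}(\Upsilon(x^*))]$ exactly as in the proof of Theorem~\ref{SONC}, its recession cone contains the convex cone $\mathcal{T}^c_{K\times\Omega}(\Upsilon(x^*))$, so the subspace ${\rm range}(\Upsilon'(x^*))$ together with the recession directions of $\overline{T}(d)$ already spans the whole space. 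A standard relative-interior calculus for convex sets (cf.\ \cite{Roc70}) then guarantees that the affine set ${\rm range}(\Upsilon'(x^*))+\nabla^2\Upsilon(x^*)(d,d)$ meets ${\rm ri}\,\overline{T}(d)$, which is precisely the hypothesis needed to invoke Theorem~\ref{SONC}. Applying that theorem with the above $T(d)$ and noting $\sigma((\xi,\Gamma)\,|\,T(d))=\sigma((\xi,\Gamma)\,|\,\mathcal{T}^2_K(h(x^*);h'(x^*)d)\times\Sigma(d))$ yields the asserted inequality.

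The step I expect to be the main obstacle is the passage from metric regularity to the surjectivity ${\rm range}(\Upsilon'(x^*))+\mathcal{T}^c_{K\times\Omega}(\Upsilon(x^*))=\mathbb{Y}\times\mathbb{S}^n\times\mathbb{S}^n$. Because $\Omega$ is nonconvex, metric regularity is naturally phrased through the limiting normal cone via the coderivative criterion $\Upsilon'(x^*)^{*}\Gamma=0,\ \Gamma\in\mathcal{N}_{K\times\Omega}(\Upsilon(x^*))\Rightarrow\Gamma=0$, and converting this into a statement about the Clarke tangent cone $\mathcal{T}^c$ requires controlling the closed convex hull in the polarity relation $\mathcal{T}^c_{K\times\Omega}(\Upsilon(x^*))=[\mathcal{N}_{K\times\Omega}(\Upsilon(x^*))]^{\circ}$; care is needed to ensure the nonconvexity does not enlarge the relevant normal cone when polars are taken. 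Once this surjectivity is secured, the remaining relative-interior argument is routine.
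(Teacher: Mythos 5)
Your proposal is correct in outline and follows essentially the same route as the paper's proof of Corollary \ref{SONC1}: the paper likewise specializes $T(d)=\mathcal{T}^2_{K}(h(x^*);h'(x^*)d)\times\Sigma(d)$ in Theorem \ref{SONC} and derives the required relative-interior condition from the surjectivity $\Upsilon'(x^*)\mathbb{X}+\mathcal{T}^c_{K\times\Omega}(\Upsilon(x^*))=\mathbb{Y}\times\mathbb{S}^n\times\mathbb{S}^n$, i.e.\ its equation \eqref{tangent-CQ}. Two points of comparison. First, you make explicit two details the paper leaves tacit: your product rule $\mathcal{T}^2_{K\times\Omega}=\mathcal{T}^2_K\times\mathcal{T}^2_{\Omega}$, obtained by sandwiching inner and outer second-order tangent sets via Proposition \ref{SOTangent} and the standing second-order regularity of $K$, is exactly what is needed to know $T(d)\subseteq\mathcal{T}^2_{K\times\Omega}(\Upsilon(x^*);\Upsilon'(x^*)d)$ (and your nonemptiness claim for $\mathcal{T}^2_K$ is sound, since any tangent direction admits a parabolic sequence whose second-order terms must approach $\mathcal{T}^2_K$ under second-order regularity); likewise your recession-cone/relative-interior calculus is a cleaner justification than the paper's, which for each $(w,V)\in T(d)$ solves $\Upsilon'(x^*)\eta+\nabla^2\Upsilon(x^*)(d,d)=(w+\xi,V+\Gamma)$ with $(\xi,\Gamma)\in\mathcal{T}^c_{K\times\Omega}(\Upsilon(x^*))$ and then simply asserts the ri-condition. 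Second, the ``main obstacle'' you flag is dispatched by the paper in a single sentence: it identifies metric regularity with the coderivative criterion $\mathcal{N}_{K\times\Omega}(\Upsilon(x^*))\cap{\rm Ker}(\nabla\Upsilon(x^*))=\{0\}$ and declares that this implies \eqref{tangent-CQ}. Your caution here is well placed: since $[\mathcal{T}^c_{K\times\Omega}(\Upsilon(x^*))]^{\circ}={\rm cl}\,{\rm co}\,\mathcal{N}_{K\times\Omega}(\Upsilon(x^*))$, condition \eqref{tangent-CQ} is equivalent to ${\rm co}\,\mathcal{N}_{K\times\Omega}(\Upsilon(x^*))\cap{\rm Ker}(\nabla\Upsilon(x^*))=\{0\}$ (closures cost nothing, as a dense convex set in a finite-dimensional space is the whole space), and for the nonconvex set $\Omega$ this is formally stronger than the limiting-normal-cone criterion. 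The paper offers no argument for this convexification step either, so your proposal reproduces the paper's proof at the same level of rigor, with the one unresolved debt located exactly where the paper's own one-line assertion sits.
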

 \begin{proof}
  The metric regularity of $\mathcal{F}$ at $x^*$ for the origin is equivalent to Robinson's CQ, i.e., $\mathcal{N}_{K\times\Omega}(\Upsilon(x^*))\cap{\rm Ker}(\nabla\Upsilon(x^*))=\{0\}$,
  which implies that
  \begin{equation}\label{tangent-CQ}
   \Upsilon'(x^*)\mathbb{X}
    +\mathcal{T}_{K\times\Omega}^c(\Upsilon(x^*))
    =\left(\begin{matrix}
     \mathbb{Y}\\
     \mathbb{S}^n\times\mathbb{S}^n
     \end{matrix}\right).
  \end{equation}
  Fix an arbitrary $d\in\mathcal{C}(x^*)$.
  For each $(w,V)\in\mathcal{T}^2_{K}(h(x^*);h'(x^*)d)\times\Sigma(d)$,
  by invoking \eqref{tangent-CQ} with $-(w,V)+\nabla^2\Upsilon(x^*)(d,d)$,
  there exist $\eta\in\mathbb{X}$ and
  $(\xi,\Gamma)\in\mathcal{T}_{K\times\Omega}^c(\Upsilon(x^*))$ such that
  $\Upsilon'(x^*)\eta+\nabla^2\Upsilon(x^*)(d,d)=(w+\xi,V+\Gamma)$.
  This implies that
  \[
   \big[{\rm range}(\Upsilon'(x^*))\!+\!\nabla^2\Upsilon(x^*)(d,d)\big]\cap
   {\rm ri}\big[\mathcal{T}^2_{K}(h(x^*);h'(x^*)d)\times\Sigma(d)
   +{\mathcal{T}}^c_{K\times\Omega}(\Upsilon(x^*))\big]\ne\emptyset.
  \]
  Take $T(d)=\mathcal{T}^2_{K}(h(x^*);h'(x^*)d)\times\Sigma(d)$.
  The result follows by Theorem \ref{SONC}.
 \end{proof}
 \begin{corollary}\label{SONC2}
  Let $x^*$ be a locally optimal solution of the problem \eqref{SDCMPCC}. Suppose that
  \begin{equation}\label{SRCQ}
   {\rm Sp}\big(\mathcal{N}_{K\times\Omega}(\Upsilon(x^*))\big)
   \cap{\rm Ker}(\nabla\Upsilon(x^*))=\{0\}.
  \end{equation}
  Then, $\widehat{\mathcal{M}}(x^*)=\mathcal{M}(x^*)=\mathcal{M}^c(x^*)$
  is a singleton, say $\{(\xi^*,\Gamma^*)\}$, and for any $d\in\mathcal{C}(x^*)$,
  \[
   \langle d,\nabla^2_{xx}L(x^*,\xi^*,\Gamma^*)d\rangle
   -\sigma\big((\xi^*,\Gamma^*)\,|\,\mathcal{T}^2_{K\times\Omega}(\Upsilon(x^*);\Upsilon'(x^*)d)\big)\geq 0.
  \]
 \end{corollary}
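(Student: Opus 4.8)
The plan is to show first that the nondegeneracy condition \eqref{SRCQ} collapses the three multiplier sets to a common singleton, and then to upgrade the convex-subset inequality of Corollary \ref{SONC1} to the full second-order tangent set by a support-function argument.

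For the first part, I would start from $\mathcal{N}_{K\times\Omega}(\Upsilon(x^*))\subseteq{\rm Sp}\big(\mathcal{N}_{K\times\Omega}(\Upsilon(x^*))\big)$, which turns \eqref{SRCQ} into Robinson's CQ $\mathcal{N}_{K\times\Omega}(\Upsilon(x^*))\cap{\rm Ker}(\nabla\Upsilon(x^*))=\{0\}$ and hence into the metric regularity of $\mathcal{F}$ at $x^*$. By the first part of Theorem \ref{SONC} this already gives $\mathcal{M}(x^*)\ne\emptyset$, while the first-order condition \eqref{fo-optimality} together with the regular-normal-cone chain rule $\widehat{\mathcal{N}}_{\mathcal{S}}(x^*)=\nabla\Upsilon(x^*)\widehat{\mathcal{N}}_{K\times\Omega}(\Upsilon(x^*))$ (which follows from the tangent-cone identity of Lemma \ref{MSCQ-result}(i) under Robinson's CQ) yields $\widehat{\mathcal{M}}(x^*)\ne\emptyset$. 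For uniqueness I would take $(\xi_1,\Gamma_1),(\xi_2,\Gamma_2)\in\mathcal{M}^c(x^*)$ and subtract their stationarity equations $\nabla\varphi(x^*)+\nabla\Upsilon(x^*)(\xi_i;\Gamma_i)=0$ to get $(\xi_1-\xi_2,\Gamma_1-\Gamma_2)\in{\rm Ker}(\nabla\Upsilon(x^*))$; since $\mathcal{N}^c_{K\times\Omega}(\Upsilon(x^*))$ is a convex cone, this difference lies in $\mathcal{N}^c-\mathcal{N}^c={\rm Sp}(\mathcal{N}^c)={\rm Sp}\big(\mathcal{N}_{K\times\Omega}(\Upsilon(x^*))\big)$, so \eqref{SRCQ} forces it to vanish. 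Thus $\mathcal{M}^c(x^*)$ has at most one element, and combining with $\widehat{\mathcal{M}}(x^*)\subseteq\mathcal{M}(x^*)\subseteq\mathcal{M}^c(x^*)$ and $\widehat{\mathcal{M}}(x^*)\ne\emptyset$ gives $\widehat{\mathcal{M}}(x^*)=\mathcal{M}(x^*)=\mathcal{M}^c(x^*)=\{(\xi^*,\Gamma^*)\}$.

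For the second part I would invoke Corollary \ref{SONC1}, available because metric regularity holds. Since $\mathcal{M}^c(x^*)=\{(\xi^*,\Gamma^*)\}$ the supremum degenerates, and additivity of the support function over products yields, for every nonempty convex $\Sigma(d)\subseteq\mathcal{T}^2_{\Omega}(\Theta(x^*);\Theta'(x^*)d)$,
\[
  \langle d,\nabla^2_{xx}L(x^*,\xi^*,\Gamma^*)d\rangle
  \ge\sigma\big(\xi^*\,|\,\mathcal{T}^2_{K}(h(x^*);h'(x^*)d)\big)+\sigma\big(\Gamma^*\,|\,\Sigma(d)\big).
\]
The decisive observation is that $d\in\mathcal{C}(x^*)$ gives $\Theta'(x^*)d\in\mathcal{T}_{\Omega}(\Theta(x^*))$, so by Proposition \ref{SOTangent} and Theorem \ref{SO-tangent} the set $\mathcal{T}^2_{\Omega}(\Theta(x^*);\Theta'(x^*)d)$ is nonempty and every one of its singletons $\{(S_0,T_0)\}$ is an admissible $\Sigma(d)$. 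Taking $\Sigma(d)=\{(S_0,T_0)\}$ turns the last term into $\langle\Gamma^*,(S_0,T_0)\rangle$, and supremizing the inequality over all $(S_0,T_0)\in\mathcal{T}^2_{\Omega}(\Theta(x^*);\Theta'(x^*)d)$ restores $\sigma\big(\Gamma^*\,|\,\mathcal{T}^2_{\Omega}(\Theta(x^*);\Theta'(x^*)d)\big)$. Finally, the second-order regularity of $K$ and the parabolic derivability of $\Omega$ from Proposition \ref{SOTangent} make inner and outer second-order tangent sets coincide for both factors, whence $\mathcal{T}^2_{K\times\Omega}(\Upsilon(x^*);\Upsilon'(x^*)d)=\mathcal{T}^2_{K}(h(x^*);h'(x^*)d)\times\mathcal{T}^2_{\Omega}(\Theta(x^*);\Theta'(x^*)d)$, and additivity recombines the two pieces into $\sigma\big((\xi^*,\Gamma^*)\,|\,\mathcal{T}^2_{K\times\Omega}(\Upsilon(x^*);\Upsilon'(x^*)d)\big)$, which is exactly the asserted inequality.

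I expect the main obstacle to be precisely this passage from a convex subset to the full, generally nonconvex, second-order tangent set: Theorem \ref{SO-tangent} shows that $\mathcal{T}^2_{\Omega}$ carries the projection constraint \eqref{TKKResult2} and so need not be convex, which is why Corollary \ref{SONC1} was phrased only for convex subsets. The point that unlocks the argument is that a support function is a supremum of linear evaluations and therefore blind to convexification, so running Corollary \ref{SONC1} over all singleton subsets and supremizing recovers $\sigma(\Gamma^*\,|\,\mathcal{T}^2_{\Omega})$ with no convexity assumed on $\mathcal{T}^2_{\Omega}$. A secondary technical step to check with care is the regular-normal-cone chain rule used for $\widehat{\mathcal{M}}(x^*)\ne\emptyset$, together with the product rule for second-order tangent sets, both of which hinge on the metric regularity and on the coincidence of inner and outer second-order tangent sets established earlier.
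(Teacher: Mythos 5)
There is one genuine gap, and it sits exactly where you flagged a ``secondary technical step'': the claim that the regular-normal-cone chain rule $\widehat{\mathcal{N}}_{\mathcal{S}}(x^*)=\nabla\Upsilon(x^*)\widehat{\mathcal{N}}_{K\times\Omega}(\Upsilon(x^*))$ ``follows from the tangent-cone identity of Lemma \ref{MSCQ-result}(i) under Robinson's CQ''. It does not. Polarizing the identity $\mathcal{T}_{\mathcal{S}}(x^*)=\Upsilon'(x^*)^{-1}\mathcal{T}_{K\times\Omega}(\Upsilon(x^*))$ only yields $\widehat{\mathcal{N}}_{\mathcal{S}}(x^*)\supseteq {\rm cl}\big(\nabla\Upsilon(x^*)\widehat{\mathcal{N}}_{K\times\Omega}(\Upsilon(x^*))\big)$, which is the wrong inclusion: since $\mathcal{T}_{K\times\Omega}(\Upsilon(x^*))$ is \emph{nonconvex}, the polar calculus $[A^{-1}C]^{\circ}=A^{*}C^{\circ}$ is unavailable, and no convex-duality CQ can rescue it. Metric regularity does give you an upper estimate of $\mathcal{N}_{\mathcal{S}}(x^*)$ by $\nabla\Upsilon(x^*)\mathcal{N}_{K\times\Omega}(\Upsilon(x^*))$ — that is how $\mathcal{M}(x^*)\ne\emptyset$ is obtained in Theorem \ref{SONC} — but it places the multiplier only in the \emph{limiting} normal cone, which at nonsmooth points of $\Omega$ is strictly larger than $\widehat{\mathcal{N}}_{K\times\Omega}(\Upsilon(x^*))$. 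So your argument proves $\mathcal{M}(x^*)=\mathcal{M}^c(x^*)=\{(\xi^*,\Gamma^*)\}$ but leaves $\widehat{\mathcal{M}}(x^*)$ possibly empty. The paper closes exactly this hole with extra structure: it polarizes \eqref{SRCQ} to get $\Upsilon'(x^*)\mathbb{X}+{\rm lin}[\mathcal{T}^{c}_{K\times\Omega}(\Upsilon(x^*))]=\mathbb{Y}\times\mathbb{S}^n\times\mathbb{S}^n$, verifies via Lemma \ref{atangent} the nontrivial fact $\mathcal{T}_{\Omega}(\Theta(x^*))+\mathcal{T}^{c}_{\Omega}(\Theta(x^*))=\mathcal{T}_{\Omega}(\Theta(x^*))$ (hence $\mathcal{T}_{K\times\Omega}+{\rm lin}[\mathcal{T}^{c}_{K\times\Omega}]=\mathcal{T}_{K\times\Omega}$), and only then invokes \cite[Theorem 4]{Gfrerer16} to obtain the exact chain rule for regular normals. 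That SDCC-specific tangent-cone identity is the missing ingredient; without it (or an equivalent substitute) the equality $\widehat{\mathcal{M}}(x^*)=\mathcal{M}(x^*)$ in the statement is unproved.

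Everything else in your proposal is correct and is essentially the paper's own argument. Deducing Robinson's CQ (hence metric regularity) from $\mathcal{N}\subseteq{\rm Sp}(\mathcal{N})$ is fine and slightly more direct than the paper's derivation of subregularity from the polarized condition; your uniqueness argument via $(\xi_1-\xi_2,\Gamma_1-\Gamma_2)\in{\rm Sp}(\mathcal{N}^c_{K\times\Omega})\cap{\rm Ker}(\nabla\Upsilon(x^*))$ is legitimate because ${\rm Sp}({\rm cl\,co}\,\mathcal{N})={\rm Sp}(\mathcal{N})$ in finite dimensions, matching the paper's remark that \eqref{SRCQ} passes to $\mathcal{N}^c$. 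The second part — running Corollary \ref{SONC1} with the singletons $\Sigma(d)=\{V\}$, $V\in\mathcal{T}^2_{\Omega}(\Theta(x^*);\Theta'(x^*)d)$, and supremizing the resulting linear evaluations — is precisely the paper's one-line conclusion, and your diagnosis of why it works (support functions are blind to convexification, so no convexity of $\mathcal{T}^2_{\Omega}$ is needed) is the right one. The product splitting $\mathcal{T}^2_{K\times\Omega}=\mathcal{T}^2_{K}\times\mathcal{T}^2_{\Omega}$ is also sound; note only one factor needs coincidence of inner and outer second-order tangent sets, and Proposition \ref{SOTangent} supplies it for $\Omega$.
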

 \begin{proof}
  We first argue that $\widehat{\mathcal{M}}(x^*)\ne\emptyset$.
  Since $[{\rm Sp}(\mathcal{N}_{K\times\Omega}(\Upsilon(x^*)))]^{\circ}
  ={\rm lin}[\mathcal{T}_{K\times\Omega}^{c}(\Upsilon(x^*))]$,
  taking the negative polar to the both sides of equation \eqref{SRCQ} yields that
  \[
    \Upsilon'(x^*)\mathbb{X}+{\rm lin}[\mathcal{T}_{K\times\Omega}^{c}(\Upsilon(x^*))]
    =\left(\begin{matrix}
      \mathbb{Y}\\
      \mathbb{S}^n\times\mathbb{S}^n
      \end{matrix}\right).
  \]
  Clearly, this condition implies that the multifunction $\mathcal{F}$
  in Theorem \ref{SONC1} is subregular at $x^*$ for the origin.
  From Lemma \ref{atangent} in Appendix,
  $\mathcal{T}_{\Omega}(\Theta(x^*))+\mathcal{T}_{\Omega}^{c}(\Theta(x^*))
  =\mathcal{T}_{\Omega}(\Theta(x^*))$. Together with the convexity of $K$,
  $\mathcal{T}_{K\times\Omega}(\Upsilon(x^*))+{\rm lin}[\mathcal{T}_{K\times\Omega}^{c}(\Upsilon(x^*))]
  =\mathcal{T}_{K\times\Omega}(\Upsilon(x^*))$.
  From \cite[Theorem 4]{Gfrerer16} it follows that
  $\widehat{\mathcal{N}}_{\mathcal{S}}(x^*)
  =\nabla\Upsilon(x^*)\widehat{\mathcal{N}}_{K\times\Omega}(\Upsilon(x^*))$.
  Thus, by \eqref{fo-optimality}, we have $\widehat{\mathcal{M}}(x^*)\ne\emptyset$.
  Consequently, $\mathcal{M}^c(x^*)\supseteq\mathcal{M}(x^*)\ne \emptyset$. In addition, it is easy to check that
  the condition \eqref{SRCQ} implies that $\mathcal{M}(x^*)$ is a singleton. So,
  we have $\widehat{\mathcal{M}}(x^*)=\mathcal{M}(x^*)$ is a singleton.
  Recall that $\mathcal{N}_{K\times\Omega}^{c}(\Upsilon(x^*))
  ={\rm cl}[{\rm co}\mathcal{N}_{K\times\Omega}(\Upsilon(x^*))]$.
  It is easy to verify that
  \[
   {\rm Sp}\big(\mathcal{N}_{K\times\Omega}^{c}(\Upsilon(x^*))\big)\cap{\rm Ker}(\nabla\Upsilon(x^*))=\{0\}.
  \]
  This implies that $\mathcal{M}^c(x^*)$ is a singleton.
  Applying Corollary \ref{SONC1} with $\Sigma(d)=\{V\}$
  for each $V\in\mathcal{T}^2_{\Omega}(\Theta(x^*);\Theta'(x^*)d))$
  and the singleton of $\mathcal{M}(x^*)$ yields the second part.
 \end{proof}
 \begin{remark}
  Recently, Gfrerer et al. derived a dual form of second-order necessary conditions
  by the lower generalized support function under the directional metric subregularity
  (see \cite[Theorem 2]{Gfrerer19}). Although the condition \eqref{SRCQ} is stronger than
  the subregularity assumption, the conclusion of Corollary \ref{SONC2} implies the result of
  \cite[Theorem 2]{Gfrerer19}.
 \end{remark}

 Corollary \ref{SONC2} implies that a no gap second-order
 sufficient optimality condition is
 \begin{equation}\label{SOSC1-ineq}
   \sup_{(\xi,\Gamma)\in \mathcal{M}(\overline{x})}\Big\{\langle d,\nabla^2_{xx}L(\overline{x},\xi,\Gamma)d\rangle
   -\sigma\big((\xi,\Gamma)\,|\,\mathcal{T}^2_{K\times\Omega}(\Upsilon(\overline{x});\Upsilon'(\overline{x})d)\big)\Big\}>0
   \quad\forall d\in\mathcal{C}(\overline{x}).
 \end{equation}
 By Proposition \ref{SO-regular}, under a directional subregularity CQ, we get
 a sufficient condition.
 \begin{theorem}\label{SOSC1}
  Let $\overline{x}$ be a feasible point of the problem \eqref{SDCMPCC} with
  $\mathcal{M}(\overline{x})\ne\emptyset$. Suppose that the inequality
  \eqref{SOSC1-ineq} holds and the multifunction
  $\mathcal{H}(X,Y):=(X;Y;\langle X,Y\rangle)
    -\mathbb{S}_{+}^n\times\mathbb{S}_{-}^n\times\mathbb{R}_{+}$
  is metrically subregular at $\Theta(\overline{x})$ for the origin
  in each $d\in\mathcal{C}(\overline{x})$.
  Then, there exist $\kappa>0$ and $\delta>0$ such that
  for all $z\in\mathcal{S}\cap\mathbb{B}(\overline{x},\delta)$,
  $\varphi(x)\geq \varphi(\overline{x})+\kappa \|x-\overline{x}\|^2$.
  \end{theorem}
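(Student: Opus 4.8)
The plan is to establish the quadratic growth estimate by contradiction through a blow-up argument combined with a second-order expansion of the Lagrangian. Suppose the conclusion fails; then there is a sequence $\{x^k\}\subseteq\mathcal{S}$ with $x^k\to\overline{x}$, $x^k\ne\overline{x}$, and $\varphi(x^k)<\varphi(\overline{x})+\frac{1}{k}\|x^k-\overline{x}\|^2$. Set $t_k:=\|x^k-\overline{x}\|\downarrow 0$ and $d^k:=(x^k-\overline{x})/t_k$, and pass to a subsequence so that $d^k\to d$ with $\|d\|=1$. First I would verify $d\in\mathcal{C}(\overline{x})$: since $\Upsilon(x^k)\in K\times\Omega$ and $[\Upsilon(x^k)-\Upsilon(\overline{x})]/t_k\to\Upsilon'(\overline{x})d$, the definition of the tangent cone yields $\Upsilon'(\overline{x})d\in\mathcal{T}_{K\times\Omega}(\Upsilon(\overline{x}))$, while dividing $\varphi(x^k)-\varphi(\overline{x})$ by $t_k$ and letting $k\to\infty$ gives $\varphi'(\overline{x})d\le 0$.

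The decisive step is to cast the feasible iterates into parabolic form. Writing $\Psi(X,Y):=(X;Y;\langle X,Y\rangle)$, we have $\Omega=\Psi^{-1}(\mathbb{S}^n_+\times\mathbb{S}^n_-\times\mathbb{R}_+)$ and $\mathcal{H}(X,Y)=\Psi(X,Y)-\mathbb{S}^n_+\times\mathbb{S}^n_-\times\mathbb{R}_+$. Since $\mathbb{S}^n_+\times\mathbb{S}^n_-\times\mathbb{R}_+$ is second-order regular and $\mathcal{H}$ is metrically subregular at $\Theta(\overline{x})$ for the origin in the direction $\Theta'(\overline{x})d$, Proposition \ref{SO-regular} (applied to the map $\Psi$) shows that $\Omega$ is outer second-order regular at $\Theta(\overline{x})$ in the direction $\Theta'(\overline{x})d$; combined with the standing second-order regularity of $K$, the product $K\times\Omega$ is outer second-order regular at $\Upsilon(\overline{x})$ in the direction $\Upsilon'(\overline{x})d$. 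Using the second-order Taylor expansion of $\Upsilon$, I write
\[
 \Upsilon(x^k)=\Upsilon(\overline{x})+t_k\Upsilon'(\overline{x})d+\tfrac{1}{2}t_k^2 r^k,\qquad
 r^k=\tfrac{2}{t_k}\Upsilon'(\overline{x})(d^k-d)+\nabla^2\Upsilon(\overline{x})(d,d)+o(1).
\]
Although $r^k$ need not be bounded, one has $t_kr^k\to 0$ and $\Upsilon(x^k)\in K\times\Omega$, so outer second-order regularity furnishes $z^k\in\mathcal{T}^2_{K\times\Omega}(\Upsilon(\overline{x});\Upsilon'(\overline{x})d)$ with $\|r^k-z^k\|\to 0$.

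Now I would insert a multiplier. Pick $(\xi,\Gamma)\in\mathcal{M}(\overline{x})$ for which the bracket in \eqref{SOSC1-ineq} is strictly positive. Expanding $L(\cdot,\xi,\Gamma)$ to second order at $\overline{x}$ and using $\nabla_xL(\overline{x},\xi,\Gamma)=0$ gives
\[
 [\varphi(x^k)-\varphi(\overline{x})]+\langle(\xi,\Gamma),\Upsilon(x^k)-\Upsilon(\overline{x})\rangle
 =\tfrac{1}{2}t_k^2\langle d^k,\nabla^2_{xx}L(\overline{x},\xi,\Gamma)d^k\rangle+o(t_k^2).
\]
Substituting the parabolic form yields $\langle(\xi,\Gamma),\Upsilon(x^k)-\Upsilon(\overline{x})\rangle=t_k\langle(\xi,\Gamma),\Upsilon'(\overline{x})d\rangle+\tfrac{1}{2}t_k^2\langle(\xi,\Gamma),r^k\rangle$. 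The first-order cross term vanishes: stationarity gives $\langle(\xi,\Gamma),\Upsilon'(\overline{x})d\rangle=-\varphi'(\overline{x})d$, and since $\varphi'(\overline{x})d\le 0$ while $\langle(\xi,\Gamma),\Upsilon'(\overline{x})d\rangle\le 0$ by the complementarity between $\mathcal{M}(\overline{x})$ and the critical direction $d$, both equal $0$. Meanwhile $z^k\in\mathcal{T}^2_{K\times\Omega}(\Upsilon(\overline{x});\Upsilon'(\overline{x})d)$ and $\|r^k-z^k\|\to 0$ give $\langle(\xi,\Gamma),r^k\rangle\le\sigma\big((\xi,\Gamma)\,|\,\mathcal{T}^2_{K\times\Omega}(\Upsilon(\overline{x});\Upsilon'(\overline{x})d)\big)+o(1)$ by the definition of the support function. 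Combining these,
\[
 \varphi(x^k)-\varphi(\overline{x})\ge\tfrac{1}{2}t_k^2\Big[\langle d^k,\nabla^2_{xx}L(\overline{x},\xi,\Gamma)d^k\rangle-\sigma\big((\xi,\Gamma)\,|\,\mathcal{T}^2_{K\times\Omega}(\Upsilon(\overline{x});\Upsilon'(\overline{x})d)\big)\Big]+o(t_k^2).
\]
Dividing by $t_k^2$, invoking $\varphi(x^k)-\varphi(\overline{x})<\frac{1}{k}t_k^2$, and letting $k\to\infty$ produces $\langle d,\nabla^2_{xx}L(\overline{x},\xi,\Gamma)d\rangle-\sigma\big((\xi,\Gamma)\,|\,\mathcal{T}^2_{K\times\Omega}(\Upsilon(\overline{x});\Upsilon'(\overline{x})d)\big)\le 0$, contradicting the choice of $(\xi,\Gamma)$.

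I expect the main obstacle to be the parabolic-form step: the correction $r^k$ is genuinely unbounded whenever $\Upsilon'(\overline{x})(d^k-d)/t_k$ fails to converge, so one cannot argue $r^k\in\mathcal{T}^2_{K\times\Omega}$ directly. This is precisely where the directional metric subregularity of $\mathcal{H}$ is indispensable, since through Proposition \ref{SO-regular} it upgrades to outer second-order regularity of $\Omega$, which requires only $t_kr^k\to 0$ to place $r^k$ asymptotically in the second-order tangent set and thereby makes the support-function bound usable. A secondary delicate point is the vanishing of the first-order cross term, which rests on the compatibility of the chosen multiplier in $\mathcal{M}(\overline{x})$ with the critical direction $d$ and ultimately on the tangent/normal cone structure of the SDCC set $\Omega$.
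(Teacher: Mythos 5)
Your proposal is correct and follows essentially the same route as the paper's own proof: a contradiction/blow-up sequence $x^k=\overline{x}+t_kd^k$ with $d^k\to d\in\mathcal{C}(\overline{x})$, the parabolic rewriting of $\Upsilon(x^k)$ with only $t_kr^k\to 0$, Proposition \ref{SO-regular} combined with the directional subregularity of $\mathcal{H}$ and the second-order regularity of $\mathbb{S}_{+}^n\times\mathbb{S}_{-}^n\times\mathbb{R}_{+}$ to place $r^k$ asymptotically in $\mathcal{T}^2_{K\times\Omega}(\Upsilon(\overline{x});\Upsilon'(\overline{x})d)$, and the support-function estimate against a multiplier realizing \eqref{SOSC1-ineq} (your expanding the Lagrangian as a whole rather than $\varphi$ and $\Upsilon$ separately is only cosmetic). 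One caution: your vanishing of the cross term via ``complementarity'' $\langle(\xi,\Gamma),\Upsilon'(\overline{x})d\rangle\le 0$ is not automatic for the nonconvex set $\Omega$ (limiting normals can pair positively with tangent directions), but this matches the paper's own unproved assertion $\varphi'(\overline{x})d=0$; a clean fix is to use the invariance $\mathcal{T}^2_{K\times\Omega}(\Upsilon(\overline{x});h)+\mathbb{R}h\subseteq\mathcal{T}^2_{K\times\Omega}(\Upsilon(\overline{x});h)$ together with the finiteness of $\sigma\big((\xi,\Gamma)\,|\,\mathcal{T}^2_{K\times\Omega}(\Upsilon(\overline{x});\Upsilon'(\overline{x})d)\big)$ for the chosen multiplier, which forces $\langle(\xi,\Gamma),\Upsilon'(\overline{x})d\rangle=0$.
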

  \begin{proof}
  Suppose that the conclusion does not hold at $\overline{x}$.
  Then there exists a sequence of feasible points $\{x^k\}\subseteq\mathcal{S}$
  converging to $\overline{x}$ such that
  \begin{equation}\label{ineq-growth}
   \varphi(x^k)\le \varphi(\overline{x})+o(t_k^2)\ \ {\rm with}\ \
   t_k:=\|x^k-\overline{x}\|.
  \end{equation}
  We can assume, by passing to a subsequence if necessary, that $d^k:=\frac{x^k-\overline{x}}{t_k}$
  converges to a vector $d$ with $\|d\|=1$. Clearly, $d\in\mathcal{C}(\overline{x})$.
  By the Taylor expansion of $\Upsilon(x^k)$ at $\overline{x}$,
  \[
    K\times\Omega\ni\Upsilon(x^k)=\Upsilon(\overline{x})+t_k\Upsilon'(\overline{x})d
    +\frac{1}{2}t_k^2\big[\Upsilon'(\overline{x})w^k+\nabla^2\Upsilon(\overline{x})(d,d)\big]+o(t_k^2)
  \]
  with $w^k:=2t_k^{-2}(x^k-\overline{x}-t_kd)$. Since
  $x^k-\overline{x}-t_kd=o(t_k)$, we have $t_kw^k\to 0$.
  Notice that $\Omega=\mathcal{H}^{-1}(\mathbb{S}_{+}^n\times\mathbb{S}_{-}^n\times\mathbb{R}_{+})$.
  Recall that $\mathbb{S}_{+}^n\times\mathbb{S}_{-}^n\times\mathbb{R}_{+}$ is second-order regular
  at $(\Theta(\overline{x});\langle\theta(\overline{x}),\zeta(\overline{x})\rangle)$
  in the direction $(\Theta'(\overline{x})d;
  \langle\theta'(\overline{x})d,\zeta(\overline{x})\rangle
  +\langle\theta(\overline{x}),\zeta'(\overline{x})d\rangle)$
  by \cite[Proposition 3.136 \& 3.89 \& Example 3.140]{BS00}.
  Together with the subregularity of $\mathcal{H}$ at $\overline{x}$ for
  the origin, from Proposition \ref{SO-regular} it follows that
  $\Omega$ is second-order regular at $\Theta(\overline{x})$ in the direction
  $\Theta'(\overline{x})d$. Together with the second-order regularity of $K$
  and the last equation,
  \[
    \Upsilon'(\overline{x})w^k+\nabla^2\Upsilon(\overline{x})(d,d)\in
    \mathcal{T}_{K\times\Omega}^2(\Upsilon(\overline{x});\Upsilon'(\overline{x})d)
    +o(1)\mathbb{B}_{\mathbb{Y}\times\mathbb{S}^n\times\mathbb{S}^n}.
  \]
  Together with the inequality \eqref{ineq-growth} and the second-order
  Taylor expansion of $\varphi(x^k)$ at $\overline{x}$, there exists
  a sequence $\varepsilon_k\to 0$ such that
  \begin{subequations}
   \begin{align}\label{temp-cond1}
    2t_k^{-1}\varphi'(\overline{x})d+(\varphi'(\overline{x})w^k+\langle d,\nabla^2\varphi(\overline{x})d\rangle)
    \le\varepsilon_k,\qquad\quad\\
    \label{temp-cond2}
    \Upsilon'(\overline{x})w^k+\nabla^2\Upsilon(\overline{x})(d,d)\in
    \mathcal{T}_{K\times\Omega}^2(\Upsilon(\overline{x});\Upsilon'(\overline{x})d)
    +\varepsilon_k\mathbb{B}_{\mathbb{S}^n\times\mathbb{S}^n}.
   \end{align}
  \end{subequations}
  In addition, from \eqref{SOSC1-ineq} there exist $\Gamma\in\mathcal{M}(\overline{x})$
  and a constant $c_1>0$ such that
  \[
    \langle d,\nabla^2_{xx}L(\overline{x},\xi,\Gamma)d\rangle
   -\sigma\big((\xi,\Gamma)\,|\,\mathcal{T}^2_{K\times\Omega}(\Upsilon(\overline{x});\Upsilon'(\overline{x})d)\big)
   \ge c_1.
  \]
  Together with \eqref{temp-cond1}-\eqref{temp-cond2}, it immediately follows that
  \begin{align*}
   0&\ge 2t_k^{-1}\varphi'(\overline{x})d+(\varphi'(\overline{x})w^k+\langle d,\nabla^2\varphi(\overline{x})d\rangle)-\varepsilon_k\\
   &\quad +\langle(\xi,\Gamma),\Upsilon'(\overline{x})w^k\!+\!\nabla^2\Upsilon(\overline{x})(d,d)\rangle
   -\sigma\big((\xi,\Gamma)\,|\,\mathcal{T}^2_{K\times\Omega}(\Upsilon(\overline{x});\Upsilon'(\overline{x})d)\big)
   -\varepsilon_k\|\lambda\|\\
   &=\langle d,\nabla_{xx}^2L(\overline{x},\xi,\Gamma)d\rangle
   -\sigma\big((\xi,\Gamma)\,|\,\mathcal{T}^2_{K\times\Omega}(\Upsilon(\overline{x});\Upsilon'(\overline{x})d)\big)
   -\varepsilon_k(1+\|\lambda\|)\\
   &\ge c_1-\varepsilon_k(1+\|\lambda\|)
  \end{align*}
  where the equality is due to $\varphi'(\overline{x})d=0$ and
  $\langle w^k,\nabla_xL(\overline{x},\xi,\Gamma)\rangle=0$.
  Since $\varepsilon_k\to 0$, the last inequality yields a contradiction.
  Thus, we obtain the desired result.
 \end{proof}

 By Mordukhovich's coderivative rule \cite{Mordu92}, it is not hard to check
 that the multifunction $\mathcal{H}$ in Theorem \ref{SOSC1} is not
 metrically regular at $\Theta(\overline{x})$ for the origin,
 but there is a possibility for it to be metrically subregular
 at this reference point, which is equivalent to a metric qualification
 condition by Lemma \ref{asubregular} in Appendix.
 When the multifunction $\mathcal{H}$ is not
 metrically subregular at $\Theta(\overline{x})$ for the origin,
 by noting that $\mathbb{S}^n_+\times\mathbb{S}_{-}^n$ is second-order regular
 and following the arguments as those for \cite[Theorem 3.83]{BS00},
 we have the following second-order sufficient condition which
 is stronger than the one in Theorem \ref{SOSC1} due to the inclusion relation
 $\mathcal{T}^2_{\mathbb{S}^n_+\times\mathbb{S}_{-}^n}(\Theta(\overline{x});
 \Theta'(\overline{x})d)\supseteq\mathcal{T}_{\Omega}^2(\Theta(\overline{x});\Theta'(\overline{x})d)$.
 \begin{theorem}\label{SOSC2}
  Let $\overline{x}$ be a feasible point of \eqref{ESDCMPCC} with
  $\mathcal{M}(\overline{x})\ne\emptyset$. If for each $d\in\mathcal{C}(\overline{x})$
  \begin{align}\label{SOSC2-ineq}
   \sup_{(\xi,\Gamma)\in \mathcal{M}(\overline{x})}\Big\{\langle d,\nabla^2_{xx}L(\overline{x},\xi,\Gamma)d\rangle
   -\sigma\big((\xi,\Gamma)\,|\, \mathcal{T}^2_{K\times\mathbb{S}^n_+\times\mathbb{S}_{-}^n}(\Upsilon(\overline{x});
   \Upsilon'(\overline{x})d)\big)\Big\}>0,
  \end{align}
  there exist $\kappa>0$ and $\delta>0$ such that
  for all $z\in\mathcal{S}\cap\mathbb{B}(\overline{x},\delta)$,
  $\varphi(x)\geq \varphi(\overline{x})+\kappa \|x-\overline{x}\|^2$.
  \end{theorem}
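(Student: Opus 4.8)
The plan is to argue by contradiction, following the scheme of the proof of Theorem \ref{SOSC1}, but replacing the appeal to Proposition \ref{SO-regular} (and hence the metric subregularity of $\mathcal{H}$) by the intrinsic second-order regularity of the \emph{convex} set $K\times\mathbb{S}^n_+\times\mathbb{S}_{-}^n$. Suppose the quadratic growth fails at $\overline{x}$. Then there is a sequence $\{x^k\}\subseteq\mathcal{S}$ with $x^k\to\overline{x}$ and $\varphi(x^k)\le\varphi(\overline{x})+o(t_k^2)$, where $t_k:=\|x^k-\overline{x}\|$. Passing to a subsequence, $d^k:=t_k^{-1}(x^k-\overline{x})\to d$ with $\|d\|=1$; exactly as in Theorem \ref{SOSC1} one checks $\varphi'(\overline{x})d\le 0$ and $\Upsilon'(\overline{x})d\in\mathcal{T}_{K\times\Omega}(\Upsilon(\overline{x}))$, so $d\in\mathcal{C}(\overline{x})$, and setting $w^k:=2t_k^{-2}(x^k-\overline{x}-t_kd)$ gives $t_kw^k\to 0$.

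The key step is to feed the feasible iterates into the \emph{relaxed} convex constraint. Since $\Omega\subseteq\mathbb{S}^n_+\times\mathbb{S}_{-}^n$, every $x^k\in\mathcal{S}$ satisfies $\Upsilon(x^k)\in K\times\Omega\subseteq K\times\mathbb{S}^n_+\times\mathbb{S}_{-}^n$, and $\Upsilon'(\overline{x})d\in\mathcal{T}_{K\times\Omega}(\Upsilon(\overline{x}))\subseteq\mathcal{T}_{K\times\mathbb{S}^n_+\times\mathbb{S}_{-}^n}(\Upsilon(\overline{x}))$. The second-order Taylor expansion
\[
 K\times\mathbb{S}^n_+\times\mathbb{S}_{-}^n\ni\Upsilon(x^k)
 =\Upsilon(\overline{x})+t_k\Upsilon'(\overline{x})d
 +\frac{1}{2}t_k^2\big[\Upsilon'(\overline{x})w^k+\nabla^2\Upsilon(\overline{x})(d,d)\big]+o(t_k^2)
\]
now lands in a convex, second-order regular set: $\mathbb{S}^n_+$ and $\mathbb{S}_{-}^n$ are second-order regular, $K$ is second-order regular by assumption, and the product $K\times\mathbb{S}^n_+\times\mathbb{S}_{-}^n$ is therefore second-order regular, by \cite[Proposition 3.136, 3.89 \& Example 3.140]{BS00}. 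Hence, with $t_kw^k\to 0$, the definition of second-order regularity yields a sequence $\varepsilon_k\to 0$ with
\[
 \Upsilon'(\overline{x})w^k+\nabla^2\Upsilon(\overline{x})(d,d)
 \in\mathcal{T}^2_{K\times\mathbb{S}^n_+\times\mathbb{S}_{-}^n}(\Upsilon(\overline{x});\Upsilon'(\overline{x})d)
 +\varepsilon_k\mathbb{B}.
\]
I stress that this step needs only the \emph{target} set, not $\Upsilon^{-1}(K\times\mathbb{S}^n_+\times\mathbb{S}_{-}^n)$, to be second-order regular, which is precisely why the subregularity hypothesis on $\mathcal{H}$ in Theorem \ref{SOSC1} can be dropped here.

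The remainder copies the concluding computation of Theorem \ref{SOSC1} almost verbatim. From $\varphi(x^k)\le\varphi(\overline{x})+o(t_k^2)$, the Taylor expansion of $\varphi$, and $\varphi'(\overline{x})d=0$ (which follows from $\nabla_xL(\overline{x},\xi,\Gamma)=0$ for any $(\xi,\Gamma)\in\mathcal{M}(\overline{x})$ together with $\Upsilon'(\overline{x})d\in\mathcal{T}_{K\times\Omega}(\Upsilon(\overline{x}))$), I would obtain, after enlarging $\varepsilon_k$ to a common null sequence, the bound $\varphi'(\overline{x})w^k+\langle d,\nabla^2\varphi(\overline{x})d\rangle\le\varepsilon_k$. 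Choosing $(\xi,\Gamma)\in\mathcal{M}(\overline{x})$ for which the supremum in \eqref{SOSC2-ineq} exceeds some $c_1>0$, bounding $\langle(\xi,\Gamma),\Upsilon'(\overline{x})w^k+\nabla^2\Upsilon(\overline{x})(d,d)\rangle$ by $\sigma((\xi,\Gamma)\,|\,\mathcal{T}^2_{K\times\mathbb{S}^n_+\times\mathbb{S}_{-}^n}(\Upsilon(\overline{x});\Upsilon'(\overline{x})d))+\varepsilon_k\|(\xi,\Gamma)\|$, and cancelling the first-order terms through $\langle w^k,\nabla_xL(\overline{x},\xi,\Gamma)\rangle=0$, the two inequalities add to give
\[
 0\ge\langle d,\nabla^2_{xx}L(\overline{x},\xi,\Gamma)d\rangle
 -\sigma\big((\xi,\Gamma)\,|\,\mathcal{T}^2_{K\times\mathbb{S}^n_+\times\mathbb{S}_{-}^n}(\Upsilon(\overline{x});\Upsilon'(\overline{x})d)\big)
 -\varepsilon_k(1+\|(\xi,\Gamma)\|)\ge c_1-\varepsilon_k(1+\|(\xi,\Gamma)\|),
\]
which is impossible for large $k$ since $\varepsilon_k\to 0$ and $c_1>0$.

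The conceptual heart of the argument, and the only place where any real thought is required, is the observation in the second paragraph that quadratic growth can be certified against the \emph{larger} convex set $K\times\mathbb{S}^n_+\times\mathbb{S}_{-}^n$: this is legitimate because $\mathcal{S}$ lies inside its preimage, while the support-function penalty in \eqref{SOSC2-ineq} is correspondingly taken over the larger second-order tangent set, so that \eqref{SOSC2-ineq} is a strictly stronger hypothesis than \eqref{SOSC1-ineq}. I expect no genuine obstacle beyond the bookkeeping of the $o(t_k^2)$ remainders and the verification of the second-order regularity citations; the payoff is that no constraint qualification on $\mathcal{H}$ is needed, which is what distinguishes this result from Theorem \ref{SOSC1}.
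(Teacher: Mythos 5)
Your proposal is correct and is essentially the paper's intended argument: the paper gives no detailed proof of Theorem \ref{SOSC2}, stating only that it follows ``by noting that $\mathbb{S}^n_+\times\mathbb{S}^n_-$ is second-order regular and following the arguments as those for \cite[Theorem 3.83]{BS00},'' which is precisely the route you take---relaxing $\Omega$ to the convex set $\mathbb{S}^n_+\times\mathbb{S}^n_-$, invoking the intrinsic second-order regularity of $K\times\mathbb{S}^n_+\times\mathbb{S}^n_-$ so that no subregularity hypothesis on $\mathcal{H}$ is needed, and then repeating the contradiction computation from Theorem \ref{SOSC1}. Your fleshed-out version matches that sketch step for step, including the identification of why the stronger support-function penalty over $\mathcal{T}^2_{K\times\mathbb{S}^n_+\times\mathbb{S}^n_-}$ makes \eqref{SOSC2-ineq} stronger than \eqref{SOSC1-ineq}.
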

  \begin{remark}\label{remark-sufficient}
   Let $\overline{x}$ be a feasible point of \eqref{ESDCMPCC}.
   The critical cone of \eqref{ConvexCone} at $\overline{x}$ is
   \begin{align*}
   \widehat{\mathcal{C}}(\overline{x}):=\Big\{d\in \mathbb{X}\ |\ \varphi'(\overline{x})d\leq 0,
   h'(\overline{x})d\in\mathcal{T}_{K}(h(\overline{x})),
   \theta'(\overline{x})d\in \mathcal{T}_{\mathbb{S}^n_+}(\theta(\overline{x})),\\
    \zeta'(\overline{x})d\in \mathcal{T}_{\mathbb{S}^n_-}(\zeta(\overline{x})),
   \langle \theta'(\overline{x})d,\zeta(\overline{x})\rangle
   +\langle \theta(\overline{x}),\zeta'(\overline{x})d\rangle\geq 0\Big\}.
  \end{align*}
  By using \cite[Corollary 3.1 $\&$ Lemma 4.2]{WuZZ14}, it is easy to check that
  $\widehat{\mathcal{C}}(\overline{x})\subset\mathcal{C}(\overline{x})$.
  This, along with \cite[Lemma 4.2]{WuZZ14}, means that the second-order
  sufficient condition \eqref{SOSC2-ineq} and so \eqref{SOSC1-ineq}
  is implied by the SOSC in \cite[Definition 4.1]{WuZZ14}.
  When the strict complementarity holds at $\overline{x}$,
  the sufficient condition \eqref{SOSC2-ineq} coincides with
  the one in \cite[Definition 4.2]{WuZZ14}.
 \end{remark}
 \section{Application to rank optimization problems}\label{sec6}

  Let $\vartheta\!:\mathbb{S}^{n}\to\mathbb{R}_{+}$ be a twice differentiable loss.
  Consider the rank-regularized problem:
  \begin{equation}\label{rank-reg}
   \min_{X\in\mathbb{S}^n}\big\{\vartheta(X)+{\rm rank}(X)\big\}.
  \end{equation}
  From \cite[Section 4.2]{LiuBiPan18}, this problem can be reformulated as
  the following SDCMPCC
  \begin{equation}\label{MPEC}
   \min_{X\in\mathbb{S}^n,W\in \mathbb{S}_{+}^n}\Big\{\vartheta(X)+{\rm tr}(W)
    \ \ {\rm s.t.}\ \ \langle X,W\!-I\rangle=0,\,X\in\mathbb{S}_{+}^n,\,W\!-I\in\mathbb{S}_{-}^n\Big\},
  \end{equation}
  which has the form of \eqref{SDCMPCC} with
  $K=\mathbb{S}_{+}^n,\varphi(X,W):=\vartheta(X)+{\rm tr}(W)$,
  $h(X,W):=W$ and $\Theta(X,W):=(X;W-I)$ for $(X,W)\in\mathbb{S}^n\times\mathbb{S}^n$.
  Let $X^*$ be a local optimal solution of \eqref{rank-reg} with
  the eigenvalue decomposition as $X^*=P^*{\rm Diag}(\lambda(X^*)){P^*}^{\mathbb{T}}$.
  Write $r={\rm rank}(X^*)$ and take $W^*=P_1^*(P_1^*)^{\mathbb{T}}$ where
  $P_1^*$ is the submatrix consisting of the first $r$ columns of $P^*$.
  It is easy to check that $(X^*,W^*)$ is a local optimal solution to \eqref{MPEC}.

  \medskip

  We first argue that the condition in \eqref{SRCQ} holds at $(X^*,W^*)$.
  For this purpose, let $Z=X^*+(W^*\!-I)$ have the spectral decomposition
  as in \eqref{Zdecom}-\eqref{idx-eigZ}. Then,
  \begin{equation}\label{equa1-XYstar}
    X^*=P\left[\begin{matrix}
              \Lambda_{\alpha} & 0 & 0\\
              0 & 0_{\beta\beta} & 0\\
              0 & 0 & 0_{\gamma\gamma}
            \end{matrix}\right]P^{\mathbb{T}}
    \ \ {\rm and}\ \
    W^*-I=P\left[\begin{matrix}
              0_{\alpha\alpha} & 0 & 0\\
              0 & 0_{\beta\beta} & 0\\
              0 & 0 & \Lambda_{\gamma}
            \end{matrix}\right]P^{\mathbb{T}}
  \end{equation}
  with $\Lambda_{\alpha}\!={\rm Diag}(z^*)$ for $z^*\in\!\mathbb{R}_{+}^{|\alpha|}$
  and $\Lambda_{\gamma}\!={\rm Diag}(w^*)$ for $w^*\in[-e,0)$.
  Take an arbitrary $(H,F,G)\in{\rm Sp}\big(\mathcal{N}_{\mathbb{S}_{+}^n\times\Omega}^{c}(\Upsilon(X^*,W^*))\big)
  \cap{\rm Ker}(\nabla\Upsilon(X^*,W^*))$. Then, $F=0$ and $G+H=0$.
  Notice that ${\rm Sp}(\mathcal{N}_{\mathbb{S}_{+}^n\times\Omega}^{c}(\Upsilon(X^*,W^*)))
  \subseteq[{\rm lin}(\mathcal{T}_{\mathbb{S}_{+}^n}(W^*))]^{\circ}\times
  [\mathcal{T}_{\Omega}^{c}(X^*,W^*\!-I)]^{\circ}$ and
  \[
    [{\rm lin}(\mathcal{T}_{\mathbb{S}_{+}^n}(W^*))]^{\circ}
    =\big\{H\in\mathbb{S}^n\ |\ P_{\gamma'}^{\mathbb{T}}HP_{\gamma'}=0\big\}^{\circ}
    \ \ {\rm with}\ \gamma'=\{i\in\gamma\ |\ w_i^*=-1\}.
  \]
  Together with $(H,F,G)\in[{\rm lin}(\mathcal{T}_{\mathbb{S}_{+}^n}(W^*))]^{\circ}\times
  [\mathcal{T}_{\Omega}^{c}(X^*,W^*\!-I)]^{\circ}$
  and Lemma \ref{atangent}, we have
  \[
     P^{\mathbb{T}}HP=
    \left[\begin{matrix}
          0 & 0 & 0\\
          0  & 0 & 0\\
          0 & 0 & \widetilde{H}_{\gamma'\gamma'}
          \end{matrix}\right]
    {\ \ \rm and \ \ }
    P^{\mathbb{T}}G P=
    \left[\begin{matrix}
         \widetilde{G}_{\alpha\alpha} & \widetilde{G}_{\alpha\beta} & 0_{\alpha\gamma}\\
         \widetilde{G}_{\alpha\beta}^\mathbb{T} & 0_{\beta\beta}& 0\\
          0_{\gamma\alpha} & 0 & 0
          \end{matrix}\right].
  \]
  Combining this with $G+H=0$, we derive $H=0$ and $G=0$.
  Thus, $(H,F,G)=(0,0,0)$. So, the condition \eqref{SRCQ} holds.
  From Corollary \ref{SONC2}, the following result holds for \eqref{rank-reg}.
  \begin{proposition}\label{RankTh1}
   Let $(X^*,W^*)$ be a local optimal solution of \eqref{MPEC}.
   Then, $\mathcal{M}(X^*,W^*)$ is a singleton,
   say ${(S^*,\Gamma^*)}$, and for any $d=(G,H)$ satisfying
   $\langle\nabla\vartheta(X^*),G\rangle+{\rm tr}(H)\le 0$,
   $(G,H)\in \mathcal{T}_{\Omega}(X^*,W^*\!-\!I)$ and
   $H\in \mathcal{T}_{\mathbb{S}^n_+}(W^*)$,
   it holds that
   \[
   \langle d,\nabla^2L((X^*,W^*),S^*,\Gamma^*)d\rangle
    -\sigma\big(S^*\,|\,\mathcal{T}^2_{\mathbb{S}^n_+}(W^*;H))
    -\sigma\big(\Gamma^*\,|\,
    \mathcal{T}^2_{\Omega}((X^*,W^*\!-I);(G,H))\big)\geq 0.
   \]
  \end{proposition}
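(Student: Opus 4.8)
The plan is to obtain this proposition as a direct specialization of Corollary \ref{SONC2} to the concrete data of problem \eqref{MPEC}. The preceding discussion already verifies that the subspace condition \eqref{SRCQ} holds at $(X^*,W^*)$, so Corollary \ref{SONC2} applies verbatim with $x^*=(X^*,W^*)$, $K=\mathbb{S}_+^n$, $h(X,W)=W$ and $\Theta(X,W)=(X;W-I)$. This at once yields that $\mathcal{M}(X^*,W^*)$ is a singleton $\{(S^*,\Gamma^*)\}$ and furnishes the abstract inequality
\[
\langle d,\nabla_{xx}^2 L((X^*,W^*),S^*,\Gamma^*)d\rangle
-\sigma\big((S^*,\Gamma^*)\,|\,\mathcal{T}^2_{K\times\Omega}(\Upsilon(X^*,W^*);\Upsilon'(X^*,W^*)d)\big)\ge 0
\]
for every $d\in\mathcal{C}(X^*,W^*)$. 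What remains is to rewrite this inequality and the critical cone in terms of the problem data.

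First I would identify the critical cone. Writing $d=(G,H)$ and using $\varphi(X,W)=\vartheta(X)+{\rm tr}(W)$, the condition $\varphi'(X^*,W^*)d\le 0$ reads $\langle\nabla\vartheta(X^*),G\rangle+{\rm tr}(H)\le 0$. Since $h'(X^*,W^*)d=H$ and $\Theta'(X^*,W^*)d=(G,H)$, the inclusion $\Upsilon'(X^*,W^*)d\in\mathcal{T}_{K\times\Omega}(\Upsilon(X^*,W^*))$ splits into $H\in\mathcal{T}_{\mathbb{S}_+^n}(W^*)$ and $(G,H)\in\mathcal{T}_{\Omega}(X^*,W^*-I)$. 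Hence $\mathcal{C}(X^*,W^*)$ is exactly the set of directions $d=(G,H)$ listed in the statement.

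Next I would unfold the support-function term. Because $K\times\Omega$ is a Cartesian product, the second-order tangent set factors as
\[
\mathcal{T}^2_{K\times\Omega}(\Upsilon(X^*,W^*);\Upsilon'(X^*,W^*)d)
=\mathcal{T}^2_{\mathbb{S}_+^n}(W^*;H)\times\mathcal{T}^2_{\Omega}((X^*,W^*-I);(G,H)),
\]
so its support function separates additively into $\sigma(S^*\,|\,\mathcal{T}^2_{\mathbb{S}_+^n}(W^*;H))+\sigma(\Gamma^*\,|\,\mathcal{T}^2_{\Omega}((X^*,W^*-I);(G,H)))$. Substituting this into the abstract inequality, and writing $\nabla^2 L$ for $\nabla_{xx}^2 L$ as in the statement, gives precisely the claimed estimate.

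There is no genuine analytic obstacle here: the substantive content—the qualification condition, the singleton multiplier set, and the no-gap second-order condition—is already packaged in Corollary \ref{SONC2}. The only points requiring a little care are the factorization of the second-order tangent set over the product $K\times\Omega$, which is a standard consequence of the definition for product sets, and the bookkeeping matching $h'$, $\Theta'$ and the two blocks of $(S^*,\Gamma^*)$ to the respective constraints.
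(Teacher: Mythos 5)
Your proposal follows exactly the paper's route: the paper's entire ``proof'' of Proposition \ref{RankTh1} consists of the preceding paragraph verifying the condition \eqref{SRCQ} at $(X^*,W^*)$ and the single sentence invoking Corollary \ref{SONC2}, and your identification of the critical cone and of the multiplier blocks is the same bookkeeping the paper leaves implicit. The one place where your justification is not airtight is the factorization
\[
\mathcal{T}^2_{K\times\Omega}\big(\Upsilon(X^*,W^*);\Upsilon'(X^*,W^*)d\big)
=\mathcal{T}^2_{\mathbb{S}_+^n}(W^*;H)\times\mathcal{T}^2_{\Omega}\big((X^*,W^*\!-I);(G,H)\big),
\]
which you call ``a standard consequence of the definition for product sets.'' For \emph{outer} second-order tangent sets this is false in general: only the inclusion $\subseteq$ follows from the definition, because the sequences $t_k\downarrow 0$ realizing outer tangency in the two factors need not be the same, and the inclusion $\subseteq$ goes the wrong way for your argument (it would only bound the support function over the product set from \emph{below} by the one appearing in Corollary \ref{SONC2}). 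The equality does hold here, but it needs the coincidence of inner and outer second-order tangent sets for at least one factor: Proposition \ref{SOTangent} supplies exactly this for $\Omega$, and $\mathbb{S}_+^n$ is second-order regular (hence parabolically derivable) by \cite[Example 3.140]{BS00}. Alternatively, you can bypass the factorization entirely by imitating the proof of Corollary \ref{SONC2}: apply Corollary \ref{SONC1} with the singleton $\Sigma(d)=\{V\}$ for each $V\in\mathcal{T}^2_{\Omega}((X^*,W^*\!-I);(G,H))$ and then take the supremum over $V$, which yields the separated support functions directly. With either repair your argument is complete and coincides with the paper's.
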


  To close this section, we illustrate the role of the outer second-order tangent set
  in the second-order necessary condition by the following example.
 \begin{example}\label{example}
  Consider the problem \eqref{MPEC} with
  $\vartheta(X)=\frac{1}{2}(X_{11}\!-1)^2+X_{33}+2X^2_{12}$ and $n=3$.
  Obviously, $(X^*,W^*)$ with $X^*={\rm Diag}(1,0,0)$ and $W^*={\rm Diag}(1,0,0)$
  is an optimal solution. By \cite[Proposition 3.2]{DingSY14},
  we calculate that $\mathcal{M}(X^*,W^*)={(\xi,(\Gamma^1,\Gamma^2))}$ with
  \begin{equation*}\label{Gamma1-2}
    \xi=\left(\begin{matrix}
                        0 & 0 &0\\
                       0 & -1 & 0\\
                       0 & 0 &-1
                       \end{matrix}\right), \ \
    \Gamma^1=\left(\begin{matrix}
                        0 & 0 &0\\
                       0 & 0 & 0\\
                       0 & 0 &-1
                       \end{matrix}\right)
    {\rm \ \ and \ \ }
     \Gamma^2=\left(\begin{matrix}
                        -1 & 0 & 0\\
                        0 & 0 & 0\\
                        0 & 0 & 0
                       \end{matrix}\right).
   \end{equation*}
   Write $Y^*:=W^*-I$. From \cite[Theorem 3.1]{WuZZ14}, it follows that
   \begin{align*}
   &\mathcal{C}(X^*,W^*)=\{(G,H)\,|\, (H;(G,H))\in \mathcal{T}_{\mathbb{S}^3_+\times\Omega}(W^*;(X^*,Y^*)),\varphi'(X^*,W^*)(G,H)= 0\} \\
   &=\Big\{(G,H)\,|\, G=\left(\begin{matrix}
                        G_{11} & G_{12} & G_{13}\\
                        G_{12} &0 & 0\\
                        G_{13} & 0 & 0
                       \end{matrix}\right), H=\left(\begin{matrix}
                        0 & G_{12} & G_{13} \\
                       G_{12}& 0 & 0 \\
                       G_{13} & 0 & 0
                       \end{matrix}\right),G_{11},G_{12},G_{13}\in \mathbb{R} \Big\}.
   \end{align*}
   Now take an arbitrary direction $d=(G,H)\in \mathcal{C}(X^*,W^*)$.
   From Theorem \ref{SO-tangent}, we know that $(S,T)\in \mathcal{T}^2_{\Omega}((X^*,Y^*);G+H)$
   if and only if $S$ and $T$ take the following form
   \[
      S=\left[\begin{matrix}
                S_{11} &  T_{12}+2G_{11}G_{12} & T_{13}+2G_{11}G_{13}\\
                T_{12}+2G_{11}G_{12} & 2G^2_{12} & 2G_{12}G_{13} \\
                T_{13}+2G_{11}G_{13} &  2G_{12}G_{13} & 2G^2_{13}
       \end{matrix}\right]
   \]
   and
  \[
  T=\left[\begin{matrix}
                -2(G^2_{12}+G^2_{13}) & T_{12} & T_{13}\\
                T_{12} & T_{22} & T_{23}\\
                T_{13} & T_{23} &T_{33}

       \end{matrix}\right].
  \]
  Together with the expressions of $\Gamma^1$ and $\Gamma^2$,
  it is not difficult to obtain
  \[
   \sigma\big((\xi,\Gamma^1,\Gamma^2)\,|\,\mathcal{T}^{2}_{\mathbb{S}^n_+\times \Omega}((W^*,X^*,Y^*);(H,G,H))\big)=2+2G^2_{13}.
  \]
  By comparing $\langle d,\nabla^2_{xx}L((X^*,W^*),\xi,(\Gamma^1,\Gamma^2))d\rangle
   =G^2_{11}+8G^2_{12}$ with the following equality
  \begin{align*}
   &\nabla^2_{xx}L((X^*,W^*),\xi,(\Gamma^1,\Gamma^2))(d,d)
   -\sigma((\xi,\Gamma^1,\Gamma^2)\,|\,{\mathcal{T}}^2_{\Omega}((W^*,X^*,Y^*);(H,G,
   H))\nonumber\\
   &=G^2_{11}+8G^2_{12}-2-2G^2_{13},
  \end{align*}
  we conclude that the second-order necessary conditions involving the second-order tangent set
  is stronger than the one not involving second-order tangent set.
 \end{example}


  \bigskip
  \noindent
  {\bf\large Appendix}

 \begin{alemma}\label{Maincor}
  Fix an arbitrary $(X,Y)\in \Omega$ and an arbitrary $(F,G)\in \mathcal{T}_{\Omega}(X,Y)$.
  Let $Z=X+Y$ have the spectral decomposition as in \eqref{Zdecom}-\eqref{idx-eigZ}.
  Then,
  \begin{itemize}
     \item [(i)] when $X\in{\rm int}(\mathbb{S}_{+}^n)$ or $X\in\mathbb{S}_{+}^n$ with
                 $Y\!=G\!=0$ and $F\!\in \!{\rm ri}(\mathcal{T}_{\mathbb{S}^n_+}(X))$, it holds that
                 \[
                   \mathcal{T}^{2}_{\Omega}((X,Y);(F,G))=\mathbb{S}^n\times \{0\};
                 \]

     \item [(ii)] when $Y\in{\rm int}(\mathbb{S}_{-}^n)$ or $Y\in\mathbb{S}_{-}^n$ with
                  $X\!=F\!=0$ and $G\!\in \!{\rm ri}(\mathcal{T}_{\mathbb{S}^n_{-}}(Y))$, it holds that
                  \[
                    \mathcal{T}^{2}_{\Omega}((X,Y);(F,G))=\{0\}\times\mathbb{S}^n;
                  \]

     \item [(iii)] when $X=Y=0$, $\mathcal{T}^{2}_{\Omega}((X,Y);(F,G))=\mathcal{T}_{\Omega}(F,G)$;

     \item [(iv)] when $X=F=0,Y\in\!{\rm bd}(\mathbb{S}^n_{-})\backslash \{0\},
                  G\in\!{\rm rb}(\mathcal{T}_{\mathbb{S}^n_-}(Y))$,
                  $(S,T)\in \mathcal{T}^{2}_{\Omega}((X,Y);(F,G))$ iff
     \begin{subnumcases}{\label{CoroRe1}}
      \label{CoroRe11}S= P\left[\begin{matrix}
                \widetilde{S}_{a_1a_1}& 0\\
                0 & 0
       \end{matrix}\right]P^{\mathbb{T}}
      {\ \ \rm and \ \ } T=P\widetilde{T} P^{\mathbb{T}}\\
     \label{CoroRe12}\widetilde{S}_{a_1a_1}=Q^1\left[\begin{matrix}
                (\widehat{S}_{a_1a_1})_{\beta^1\beta^1}& 0\\
                0 & 0
       \end{matrix}\right](Q^1)^{\mathbb{T}} \\
     \widetilde{T}_{a_1a_1}=Q^1\left[\begin{matrix}
         (\widehat{T}_{a_1a_1})_{\beta^1\beta^1}&(\widehat{T}_{a_1a_1})_{\beta^1\gamma^1}\\
         \Big((\widehat{T}_{a_1a_1})_{\beta^1\gamma^1}\Big)^{\mathbb{T}} &(\widehat{T}_{a_1a_1})_{\gamma^1\gamma^1}
       \end{matrix}\right](Q^1)^{\mathbb{T}}\\
      \label{CoroRe13}(\widehat{S}_{a_1a_1})_{\beta^1\beta^1}=\Pi_{\mathbb{S}_{+}^{|\beta^1|}}
     \Big((\widehat{S}_{a_1a_1}+\widehat{T}_{a_1a_1}
     -\Delta^-)_{\beta^1\beta^1}\Big)
     \end{subnumcases}

    \item [(v)] when $X\in\!{\rm bd}(\mathbb{S}^n_{+})\backslash\{0\},Y=G=0,
                F\in\!{\rm rb}(\mathcal{T}_{\mathbb{S}^n_+}(Y))$,
               $(S,T)\in \mathcal{T}^{2}_{\Omega}((X,Y);(F,G))$ iff
      \begin{subnumcases}{\label{CoroRe2}}
      \label{CoroRe21}S=P\widetilde{S} P^{\mathbb{T}}
      {\ \ \rm and \ \ }
      T= P\left[\begin{matrix}
                0& 0\\
                0 & \widetilde{T}_{a_ra_r}
       \end{matrix}\right]P^{\mathbb{T}}\\
     \label{CoroRe22}\widetilde{S}_{a_ra_r}=Q^r\left[\begin{matrix}
                (\widehat{S}_{a_ra_r})_{\alpha^r\alpha^r}& (\widehat{S}_{a_ra_r})_{\alpha^r\beta^r}\\
                \Big((\widehat{S}_{a_ra_r})_{\alpha^r\beta^r}\Big)^{\mathbb{T}} & (\widehat{S}_{a_ra_r})_{\beta^r\beta^r}
       \end{matrix}\right](Q^r)^{\mathbb{T}} \\
     \widetilde{T}_{a_ra_r}=Q^r\left[\begin{matrix}
         0&0\\
         0 &(\widehat{T}_{a_r a_r})_{\beta^r\beta^r}
       \end{matrix}\right](Q^r)^{\mathbb{T}}\\
     \label{CoroRe23}(\widehat{S}_{a_ra_r}-\Delta^+)_{\beta^r\beta^r}=\Pi_{\mathbb{S}_{+}^{|\beta^r|}}
     \Big((\widehat{S}_{a_ra_r}+\widehat{T}_{a_ra_r}
     -\Delta^+)_{\beta^r\beta^r}\Big)
     \end{subnumcases}
    \end{itemize}
  \end{alemma}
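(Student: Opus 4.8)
The plan is to obtain all five cases by specializing the exact description of $\mathcal{T}^2_\Omega((X,Y);(F,G))$ in Theorem \ref{SO-tangent} to the spectral structure forced by the hypotheses, cutting the work in half via the linear involution $\psi(X,Y):=(-Y,-X)$, which maps $\Omega$ onto itself and sends $\mathcal{T}^2_\Omega((X,Y);(F,G))$ to $\{(-T,-S):(S,T)\in\mathcal{T}^2_\Omega((X,Y);(F,G))\}$. In each case the hypotheses on $(X,Y)$ decide which of the eigenvalue index sets $\alpha,\beta,\gamma$ of $Z=X+Y$ are empty and the value of $r_0$, while the hypotheses on $(F,G)$, read through the tangency relations \eqref{tangent-cone1}-\eqref{tangent-cone2}, decide the sign of the blocks $\widetilde H_{a_ka_k}$ and hence which of $\alpha^k,\beta^k,\gamma^k$ are empty. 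Once these are pinned down, the terms in \eqref{TKL}-\eqref{TKK} collapse for two reasons only: a summation over $\gamma$ or over $\{1,\dots,r_0-1\}$ becomes empty, or a projection of a semidefinite matrix onto the opposite cone vanishes, since $\Pi_{\mathbb{S}^m_+}\equiv0$ on $\mathbb{S}^m_-$ and $\Pi_{\mathbb{S}^m_-}\equiv0$ on $\mathbb{S}^m_+$.

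For parts (i) and (ii), I first note that $X\in{\rm int}(\mathbb{S}^n_+)$ forces $Y=0$ and $Z\succ0$, so $\beta=\gamma=\emptyset$; and that $X\in\mathbb{S}^n_+$ with $Y=G=0$, $F\in{\rm ri}(\mathcal{T}_{\mathbb{S}^n_+}(X))$ gives $Z=X\succeq0$, hence $\gamma=\emptyset$, $r_0=r$ and $\widetilde H_{a_ra_r}=\widetilde F_{\beta\beta}\succ0$, so $\beta^r=\gamma^r=\emptyset$. In either situation every $\gamma$-summation in \eqref{TKLResult1} and \eqref{TKKResult1} is empty, the projection onto $\mathbb{S}_-$ in \eqref{TKLResult2} annihilates the positive definite $\widetilde H_{a_ra_r}$, and the $k=r_0$ block \eqref{TKKResult2} degenerates to its $\alpha^r\alpha^r$-part with $\Delta^-=0$. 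Thus every block of $\widetilde T$ is forced to $0$ while no block of $\widetilde S$ is constrained, giving $\mathcal{T}^2_\Omega=\mathbb{S}^n\times\{0\}$. Applying $\psi$, under which the first subcase of (i) becomes the first subcase of (ii) and $\mathbb{S}^n\times\{0\}$ becomes $\{0\}\times\mathbb{S}^n$, yields part (ii).

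For part (iii), since $\Omega$ is a closed cone with vertex at the origin, positive homogeneity of the distance function gives, for $t>0$,
\[
 {\rm dist}\big(t(F,G)+\tfrac12 t^2(S,T),\Omega\big)=t\,{\rm dist}\big((F,G)+\tfrac12 t(S,T),\Omega\big),
\]
so comparing the defining conditions of $\mathcal{T}^2_\Omega((0,0);(F,G))$ and $\mathcal{T}_\Omega(F,G)$ shows at once that they coincide. The same conclusion can be read off Theorem \ref{SO-tangent} with $\alpha=\gamma=\emptyset$, $r=r_0=1$ and $\Delta^+=\Delta^-=0$: the surviving equations of \eqref{TKKResult2} rearrange, block by block, into the description of $\mathcal{T}_\Omega(F,G)$ provided by \eqref{tangent-cone1}-\eqref{tangent-cone2}; in particular the $\alpha^1\gamma^1$ constraint becomes $\widehat S_{\alpha^1\gamma^1}=\Sigma_{\alpha^1\gamma^1}\circ(\widehat S+\widehat T)_{\alpha^1\gamma^1}$, which is exactly the tangent-cone relation.

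For parts (iv) and (v), I treat (iv) and deduce (v) by $\psi$, under which $(X,0)$ with $X\in{\rm bd}(\mathbb{S}^n_+)\setminus\{0\}$ becomes $(0,-X)$ with $-X\in{\rm bd}(\mathbb{S}^n_-)\setminus\{0\}$, while $S,T$ and $\Delta^+,\Delta^-$ interchange. In (iv), $X=F=0$ gives $Z=Y\preceq0$ with $Y\neq0$ on the boundary, so $\alpha=\emptyset$, $r_0=1$, $a_1=\beta\neq\emptyset$, $\gamma\neq\emptyset$; reading $(0,G)\in\mathcal{T}_\Omega(0,Y)$ through \eqref{tangent-cone2} yields $\widetilde H_{a_1a_1}=\widetilde G_{\beta\beta}\preceq0$, hence $\alpha^1=\emptyset$, and $G\in{\rm rb}(\mathcal{T}_{\mathbb{S}^n_-}(Y))$ makes $\widetilde G_{\beta\beta}$ singular, so $\beta^1\neq\emptyset$. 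Substituting into Theorem \ref{SO-tangent}, every $\{1,\dots,r_0-1\}$-summation is empty so $\Delta^+=0$ (whereas $\Delta^-$, a sum over $\gamma$, survives); \eqref{TKLResult3}-\eqref{TKLResult4} then force $\widetilde S_{a_ka_l}=0$ for all off-diagonal $k<l$ via $\Pi_{\mathbb{S}_+}(\widetilde H_{a_1a_1})=0$, and \eqref{TKKResult3} forces $\widetilde S_{a_ka_k}=0$ for $k>r_0$, so the only surviving block of $\widetilde S$ is $(\widehat S_{a_1a_1})_{\beta^1\beta^1}$, cut out by the projection equation of \eqref{TKKResult2} with $\Delta^+=0$, i.e. \eqref{CoroRe13}; no explicit formula constrains $\widetilde T$ beyond that coupling, leaving the stated form \eqref{CoroRe12} and assembling into \eqref{CoroRe1}, with (v) its $\psi$-image. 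The main obstacle lies precisely here: translating the geometric hypotheses into the emptiness statements $\alpha=\emptyset$, $\alpha^1=\emptyset$, $\beta^1\neq\emptyset$, and then tracking block by block which entries of \eqref{TKL}-\eqref{TKK} are pinned to an explicit value (usually $0$), which vanish through the projection identities, and which stay free, so that the residual constraints condense into \eqref{CoroRe1}-\eqref{CoroRe2}; once the index sets are fixed, the vanishing of the $\gamma$- and $\{1,\dots,r_0-1\}$-summations together with $\Pi_{\mathbb{S}_+}(\cdot)=0$ on $\mathbb{S}_-$ does all the real work.
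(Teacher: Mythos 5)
Your proposal is correct, and its core engine is the same as the paper's: both specialize the exact characterization of Theorem \ref{SO-tangent}, using empty summations over $\{1,\dots,r_0-1\}$ or over $\gamma$ together with the vanishing of $\Pi_{\mathbb{S}_{+}}$ on $\mathbb{S}_{-}$ (and vice versa) to kill blocks. Two things differ. First, you halve the work via the involution $\psi(X,Y)=(-Y,-X)$, which the paper does not use: it proves (iv) and (v) by separate, symmetric computations. Your $\psi$-reduction is sound, but note that the transfer is not purely notational: under $\psi$ the eigenvalue order reverses ($\mu_j'=-\mu_{r+1-j}$, $r_0'=r+1-r_0$), one gets $\Delta^{+}\leftrightarrow-\Delta^{-}$, and the transferred projection equation matches \eqref{CoroRe23} only after invoking $\Pi_{\mathbb{S}_{+}}(A)=A-\Pi_{\mathbb{S}_{-}}(A)$; in a full write-up this bookkeeping should be displayed. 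Second, and more interestingly, your treatment of (iv)--(v) is actually \emph{more faithful to the statement than the paper's own proof}: the paper's proof text replaces the relative-boundary hypotheses ${\rm rb}(\mathcal{T}_{\mathbb{S}^n_-}(Y))$, ${\rm rb}(\mathcal{T}_{\mathbb{S}^n_+}(X))$ by relative-interior ones, concludes $\alpha^1=\beta^1=\emptyset$ (resp.\ $\beta^r=\gamma^r=\emptyset$), and ends with $S=0$ (resp.\ $\mathcal{T}^2_{\Omega}=\mathbb{S}^n\times\{0\}$) --- degenerate conclusions that duplicate the second subcases of (i)--(ii) and do not match the block structures \eqref{CoroRe1}--\eqref{CoroRe2} asserted in the lemma. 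Your version correctly reads the rb hypothesis as $\widetilde{G}_{\beta\beta}\preceq 0$ singular, hence $\alpha^1=\emptyset$ but $\beta^1\ne\emptyset$, keeps $\Delta^{-}$ alive while $\Delta^{+}=0$, and thereby recovers exactly \eqref{CoroRe11}--\eqref{CoroRe13}; this is the derivation the stated lemma actually requires. Your definitional homogeneity argument for (iii) coincides with the paper's one-line justification, and your consistency check against \eqref{TKKResult2} with $\Delta^{\pm}=0$ is a nice bonus. The only remaining gap is sketch-level: the emptiness claims (e.g.\ $X\in{\rm int}(\mathbb{S}^n_+)\Rightarrow Y=0$, and $\Pi'_{\mathbb{S}^n_+}(Z;G)=0\Leftrightarrow \widetilde{G}_{\beta\beta}\preceq 0$ when $\alpha=\emptyset$) should be justified from \eqref{tangent-cone1}--\eqref{tangent-cone2} as you indicate, and the $\psi$-image computation in (v) written out, but no idea is missing.
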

  \begin{proof}
  The first three cases follows by the definition of second-order tangent set.

  \medskip
  \noindent
  {\bf(iv)} Since $X=F=0, Y\in{\rm bd}(\mathbb{S}^n_{-})\backslash\{0\}$
  and $G\in {\rm ri}(\mathcal{T}_{\mathbb{S}^n_-}(Y))$, it follows that
  $Z=Y,\alpha=\emptyset$, $\mu_1=0$ and $\mu_i<0$ for each $i\in\{2,\ldots,r\}$
  and $\Delta^+=0$ in \eqref{TKKResult2}.
  Notice that $H=F+G\in {\rm ri}(\mathcal{T}_{\mathbb{S}^n_-}(Y))$ and
  ${\rm ri}(\mathcal{T}_{\mathbb{S}^n_-}(Y))
   =\{ \Gamma\in \mathbb{S}^n\,|\, P^\mathbb{T}_{a_1}\Gamma P_{a_1}\prec 0\}$.
  We have $\eta^1_j<0$ for each $j=1,2,\ldots, N_1$ and $\alpha^1=\beta^1=\emptyset$. By \eqref{TKKResult2}, $\widetilde{S}_{a_1a_1}=0$.
  From \eqref{TKKResult3}, $\widetilde{S}_{a_ka_k}=0$ for each $k\in \{2,\ldots,r\}$. From \eqref{TKLResult4}, $\widetilde{S}_{a_1a_l}=0$ for any $l\in \{2,\ldots,r\}$ and
  from \eqref {TKLResult5}, $\widetilde{S}_{a_ka_l}=0$ for any $k<l$ and $k,l\in \{2,\ldots,r\}$.
  So, $\widetilde{S}=0$ and $S=0$. The result follows.

  \medskip
  \noindent
  {\bf(v)} Since $X\in {\rm bd}\mathbb{S}^n_{+}\setminus \{0\},Y=G=0$ and
  $F\in {\rm ri}(\mathcal{T}_{\mathbb{S}^n_+}(Y))$, we know that $Z=X$
  and $\gamma=\emptyset$ and $\alpha,\beta$ are nonempty and $\mu_r=0$
  and $\mu_i>0$ for each $i\in\{1,2,\ldots,r-1\}$ and $\Delta^-=0$
  in \eqref{TKKResult2}. Notice that $H=F+G\in {\rm ri}(\mathcal{T}_{\mathbb{S}^n_+}(X))$
  and ${\rm ri}(\mathcal{T}_{\mathbb{S}^n_+}(X))
  =\{\Gamma\in \mathbb{S}^n\,|\, P^\mathbb{T}_{a_r}\Gamma P_{a_r}\succ 0\}$.
  We have $\eta^r_j>0$ for each $j=1,2,\cdots, N_r$ and
  then $\beta^r=\gamma^r=\emptyset$. By \eqref{TKKResult1} and \eqref{TKKResult2},
  $\widetilde{T}_{a_ka_k}=0$ for each $k\in\{1,2,\ldots,r\}$.
  From \eqref{TKLResult1}, $\widetilde{T}_{a_ka_l}=0$ for each $k,l\in\{1,2,\ldots,r\!-1\}$
  with $k<l$ and from \eqref {TKLResult2}, $\widetilde{T}_{a_ka_r}=0$ for each $k\in\{1,2,\ldots,r\!-1\}$.
  Hence, $\widetilde{T}=0$ and then $T=P\widetilde{T} P^{\mathbb{T}}=0$. Consequently, $\mathcal{T}^{2}_{\Omega}((X,Y);(F,G))=\mathbb{S}^n\times \{0\}$.
 \end{proof}
 \begin{alemma}\label{atangent}
  Fix an arbitrary $(X,Y)\in\Omega$ and let $Z=X+Y$ have the eigenvalue
  decomposition as in \eqref{Zdecom}-\eqref{idx-eigZ}.
  Then, $(F,G)\in\mathcal{T}^c_{\Omega}(X,Y)$ if and only if $(F,G)$ satisfies
  \begin{subnumcases}{}\label{GHmatrix3}
    P^{\mathbb{T}}FP=\left[\begin{matrix}
         \widetilde{F}_{\alpha\alpha} & \widetilde{F}_{\alpha\beta}&\widetilde{F}_{\alpha\gamma}\\
        \widetilde{F}_{\alpha\beta}^{\mathbb{T}}&0& 0\\
        \widetilde{F}_{\alpha\gamma}^{\mathbb{T}}& 0 & 0
       \end{matrix}\right],\ \
    P^{\mathbb{T}}GP=\left[\begin{matrix}
        0 & 0&\widetilde{G}_{\alpha\gamma}\\
        0 &0&\widetilde{G}_{\beta\gamma}\\
        \widetilde{G}_{\alpha\gamma}^{\mathbb{T}}& \widetilde{G}_{\beta\gamma}^{\mathbb{T}} &\widetilde{G}_{\gamma\gamma}
       \end{matrix}\right],\\
   \label{GHmatrix4}
   (\Sigma_{\alpha\gamma}-E_{\alpha\gamma})\circ\widetilde{F}_{\alpha\gamma}
   +\Sigma_{\alpha\gamma}\circ\widetilde{G}_{\alpha\gamma}=0,
  \end{subnumcases}
  which implies that
  $\mathcal{T}_{\Omega}(X,Y)+\mathcal{T}_{\Omega}^{c}(X,Y)=\mathcal{T}_{\Omega}(X,Y)$
  and $(\Delta F,\Delta G)\in[\mathcal{T}_{\Omega}^{c}(X,Y)]^{\circ}$ iff
  \begin{subnumcases}{}\label{GHmatrix3}
    P^{\mathbb{T}}\Delta FP=\left[\begin{matrix}
        0 & 0&\Delta\widetilde{F}_{\alpha\gamma}\\
        0 &0&\Delta\widetilde{F}_{\beta\gamma}\\
        \Delta\widetilde{F}_{\alpha\gamma}^{\mathbb{T}}& \Delta\widetilde{F}_{\beta\gamma}^{\mathbb{T}} &\Delta\widetilde{F}_{\gamma\gamma}
       \end{matrix}\right],
   P^{\mathbb{T}}\Delta GP=\left[\begin{matrix}
        \Delta\widetilde{G}_{\alpha\alpha} & \Delta\widetilde{G}_{\alpha\beta}&\Delta\widetilde{G}_{\alpha\gamma}\\
        \Delta\widetilde{G}_{\alpha\beta}^{\mathbb{T}}&0& 0\\
        \Delta\widetilde{G}_{\alpha\gamma}^{\mathbb{T}}& 0 & 0
       \end{matrix}\right],\\
   \label{GHmatrix4}
   \Sigma_{\alpha\gamma}\circ\Delta\widetilde{F}_{\alpha\gamma}
   +(E_{\alpha\gamma}-\Sigma_{\alpha\gamma})\circ\Delta\widetilde{G}_{\alpha\gamma}=0.
  \end{subnumcases}
 \end{alemma}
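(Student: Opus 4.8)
The plan is to combine the inner-limit representation of the Clarke tangent cone, $\mathcal{T}^{c}_{\Omega}(X,Y)=\liminf_{(X',Y')\to(X,Y)}\mathcal{T}_{\Omega}(X',Y')$ (inner limit over $(X',Y')\in\Omega$; see \cite[Theorem 6.26]{RW98}), with the tangent-cone description $(F,G)\in\mathcal{T}_{\Omega}(X',Y')\Longleftrightarrow\Pi'_{\mathbb{S}^n_+}(X'+Y';F+G)=F$ recalled in \eqref{tangent-cone1}. With $Z$ decomposed as in \eqref{Zdecom}--\eqref{idx-eigZ} and $\widetilde{H}=P^{\mathbb{T}}HP$, the first-order directional derivative (the first-order counterpart of Lemma \ref{ReviseTheo}) has the block form
\[
\Pi'_{\mathbb{S}^n_+}(Z;H)=P\left[\begin{matrix}\widetilde{H}_{\alpha\alpha}&\widetilde{H}_{\alpha\beta}&\Sigma_{\alpha\gamma}\circ\widetilde{H}_{\alpha\gamma}\\ \widetilde{H}_{\alpha\beta}^{\mathbb{T}}&\Pi_{\mathbb{S}^{|\beta|}_+}(\widetilde{H}_{\beta\beta})&0\\ (\Sigma_{\alpha\gamma}\circ\widetilde{H}_{\alpha\gamma})^{\mathbb{T}}&0&0\end{matrix}\right]P^{\mathbb{T}},
\]
with $\Sigma_{\alpha\gamma}$ the first-divided-difference matrix of the statement. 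Imposing $\Pi'_{\mathbb{S}^n_+}(Z;F+G)=F$ shows that $\mathcal{T}_{\Omega}(X,Y)$ is cut out by the linear support conditions $\widetilde{G}_{\alpha\alpha}=\widetilde{G}_{\alpha\beta}=0$ and $\widetilde{F}_{\beta\gamma}=\widetilde{F}_{\gamma\gamma}=0$, the coupling $(\Sigma_{\alpha\gamma}-E_{\alpha\gamma})\circ\widetilde{F}_{\alpha\gamma}+\Sigma_{\alpha\gamma}\circ\widetilde{G}_{\alpha\gamma}=0$, and the sole nonconvex ingredient $\mathbb{S}^{|\beta|}_+\ni\widetilde{F}_{\beta\beta}\perp\widetilde{G}_{\beta\beta}\in\mathbb{S}^{|\beta|}_-$ on the biactive block, the remaining blocks being free. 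Writing $C$ for the candidate set in the statement, $C$ is exactly $\mathcal{T}_{\Omega}(X,Y)$ with this complementarity relation replaced by $\widetilde{F}_{\beta\beta}=\widetilde{G}_{\beta\beta}=0$, so $C$ is a linear subspace.

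To prove $\mathcal{T}^{c}_{\Omega}(X,Y)\subseteq C$, I would evaluate a given Clarke tangent direction along two explicit admissible sequences: fixing the eigenbasis $P$ and shifting the $\beta$-block eigenvalues of $Z$ to a common value $\varepsilon_k\downarrow 0$ (so $\beta$ joins $\alpha$), and symmetrically to $-\varepsilon_k$ (so $\beta$ joins $\gamma$). Along the first, the tangent-cone condition at the perturbed point enlarges the positive block and forces $\widetilde{G}_{\beta\beta}=0$; moreover the coupling coefficient on the new $\beta$--$\gamma$ entries is $\varepsilon_k/(\varepsilon_k-\mu_l)\to 0$, which forces $\widetilde{F}_{\beta\gamma}=0$. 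Along the second one gets $\widetilde{F}_{\beta\beta}=0$, and since the coefficient on the new $\alpha$--$\beta$ entries is $\mu_i/(\mu_i+\varepsilon_k)\to 1$, also $\widetilde{G}_{\alpha\beta}=0$. Passing to the limit and intersecting the constraints from the two families reproduces all of the support and $\alpha\gamma$-coupling conditions together with $\widetilde{F}_{\beta\beta}=\widetilde{G}_{\beta\beta}=0$, i.e.\ membership in $C$.

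The reverse inclusion $C\subseteq\mathcal{T}^{c}_{\Omega}(X,Y)$ is the main obstacle, since an \emph{arbitrary} sequence $(X',Y')\to(X,Y)$ in $\Omega$ must be controlled, and the eigenvectors may rotate within the degenerate $\beta$-eigenspace while only the aggregate projection $P_{\beta}P_{\beta}^{\mathbb{T}}$ converges. I would bypass this perturbation analysis by polarity: because $C$ is a closed convex cone (indeed a subspace), it suffices to show $\mathcal{N}_{\Omega}(X,Y)\subseteq C^{\circ}$, for then $C=C^{\circ\circ}\subseteq[\mathcal{N}_{\Omega}(X,Y)]^{\circ}=\mathcal{T}^{c}_{\Omega}(X,Y)$. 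This reduces to checking that every limiting normal has the block structure dual to $C$, which can be read off from the explicit form of $\mathcal{N}_{\Omega}(X,Y)$ in \cite{DingSY14}. Alternatively, for each approaching sequence one projects $(F,G)$ onto the perturbed block pattern and verifies that the correction vanishes as the eigenprojections converge.

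The two remaining assertions are then settled by block computation. For $\mathcal{T}_{\Omega}(X,Y)+\mathcal{T}^{c}_{\Omega}(X,Y)=\mathcal{T}_{\Omega}(X,Y)$, the inclusion $\supseteq$ is clear, and for $\subseteq$ one adds a tangent pair to a pair of $C$ and checks each block: the zero-constraints and the linear $\alpha\gamma$-coupling are preserved under addition, and since the $C$-summand has vanishing $\beta\beta$-block the complementarity $\mathbb{S}^{|\beta|}_+\ni\widetilde{F}_{\beta\beta}\perp\widetilde{G}_{\beta\beta}\in\mathbb{S}^{|\beta|}_-$ of the sum is inherited from the tangent summand. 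For the polar, since $C$ is a subspace $[\,C\,]^{\circ}$ is its orthogonal complement, computed blockwise; the support pattern dualizes in the evident way and, on each $\alpha\gamma$ entry, the orthogonal complement of the line $(\Sigma_{ij}-1)\widetilde{F}_{ij}+\Sigma_{ij}\widetilde{G}_{ij}=0$ is $\Sigma_{ij}\Delta\widetilde{F}_{ij}+(1-\Sigma_{ij})\Delta\widetilde{G}_{ij}=0$, giving the dual coupling $\Sigma_{\alpha\gamma}\circ\Delta\widetilde{F}_{\alpha\gamma}+(E_{\alpha\gamma}-\Sigma_{\alpha\gamma})\circ\Delta\widetilde{G}_{\alpha\gamma}=0$ of the statement.
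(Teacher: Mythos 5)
Your proof of the first equivalence and of the sum formula is correct, and for the inclusion $\mathcal{T}^{c}_{\Omega}(X,Y)\subseteq C$ it takes a genuinely different route from the paper. The paper never works on the primal side: it writes $\mathcal{T}^{c}_{\Omega}(X,Y)=[\mathcal{N}_{\Omega}(X,Y)]^{\circ}$ and polarizes the explicit formula for $\mathcal{N}_{\Omega}(X,Y)$ from \cite{DingSY14}, the vanishing of the $\beta\beta$ blocks being extracted from the single dual fact $[\mathcal{N}_{{\rm gph}\,\mathcal{N}_{\mathbb{S}_{+}^{|\beta|}}}(0,0)]^{\circ}=\{(0,0)\}$; the second part is then read off by combining with the tangent-cone formula of \cite{WuZZ14}. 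Your two eigenvalue-shift sequences $Z\pm\varepsilon_k P_{\beta}P_{\beta}^{\mathbb{T}}$ (which do stay in $\Omega$, since $P_{\beta}P_{\beta}^{\mathbb{T}}Y=XP_{\beta}P_{\beta}^{\mathbb{T}}=0$) replace that dual fact by a primal computation through $\mathcal{T}^{c}_{\Omega}=\liminf\mathcal{T}_{\Omega}$, and the coefficient limits $\varepsilon_k/(\varepsilon_k-\mu_l)\to 0$ and $\mu_i/(\mu_i+\varepsilon_k)\to 1$ do force $\widetilde{G}_{\beta\beta}=\widetilde{F}_{\beta\gamma}=0$ and $\widetilde{F}_{\beta\beta}=\widetilde{G}_{\alpha\beta}=0$ as you claim; this buys a self-contained argument that avoids computing the polar of the graph-normal cone. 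Your reverse inclusion via $\mathcal{N}_{\Omega}(X,Y)\subseteq C^{\circ}$ and bipolarity is in substance the paper's dual computation, and it does close: on each $\alpha\gamma$ entry the line $(\Sigma_{ij}-1)f+\Sigma_{ij}g=0$ is orthogonal to the line $\Sigma_{ij}u+(1-\Sigma_{ij})v=0$, and on the $\beta\beta$ block the normals meet only the zero component of $C$.

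The genuine gap is in your last step, the formula for $[\mathcal{T}^{c}_{\Omega}(X,Y)]^{\circ}$. You assert that the support pattern ``dualizes in the evident way'' and matches the statement, but on the $\beta\beta$ block the evident dualization contradicts the statement: since every $(F,G)\in C$ has $\widetilde{F}_{\beta\beta}=\widetilde{G}_{\beta\beta}=0$, the orthogonal complement $C^{\perp}$ leaves the pair $(\Delta\widetilde{F}_{\beta\beta},\Delta\widetilde{G}_{\beta\beta})$ completely \emph{unconstrained}, whereas the displayed pattern forces both blocks to vanish. That ``free'' is the correct answer (and the displayed pattern strictly too small whenever $\beta\neq\emptyset$) can be cross-checked dually: $[\mathcal{T}^{c}_{\Omega}(X,Y)]^{\circ}={\rm cl}\,{\rm co}\,\mathcal{N}_{\Omega}(X,Y)\supseteq\mathcal{N}_{\Omega}(X,Y)$, and the $\beta\beta$ part of $\mathcal{N}_{\Omega}(X,Y)$ is the nontrivial cone $\mathcal{N}_{{\rm gph}\,\mathcal{N}_{\mathbb{S}_{+}^{|\beta|}}}(0,0)$, whose closed convex hull is the whole space --- which is exactly why its polar is $\{(0,0)\}$ in the first clause. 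So a faithful execution of your own blockwise computation disproves, rather than proves, the polar pattern as stated; you should have flagged the discrepancy (or restricted that clause to $\beta=\emptyset$) instead of claiming agreement. The slip is inconsequential for the paper's later uses of the lemma, where either only the sum formula is invoked (Corollary \ref{SONC2}) or the $\beta$ block is absent (the rank application in Section \ref{sec6}), but as written your final paragraph asserts a conclusion that your own method shows to be incorrect on the biactive block.
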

 \begin{proof}
  By the characterization of $\mathcal{N}_{{\rm gph}\,\mathcal{N}_{\mathbb{S}_{+}^{|\beta|}}}(0,0)$
  in \cite[Proposition 3.3]{DingSY14}, it is not hard to check that
  $[\mathcal{N}_{{\rm gph}\,\mathcal{N}_{\mathbb{S}_{+}^{|\beta|}}}(0,0)]^{\circ}=\{(0,0)\}$.
  Together with \cite[Theorem 3.1]{DingSY14}, it follows that
  $(F,G)\in\mathcal{T}^c_{\Omega}(X,Y)=[\mathcal{N}_{\Omega}(X,Y)]^{\circ}$
  iff $(F,G)$ satisfies \eqref{GHmatrix3}-\eqref{GHmatrix4}.
  Combining this fact with the characterization of $\mathcal{T}_{\Omega}(X,Y)$
  in \cite[Corollary 3.1]{WuZZ14}, we get the second part.
 \end{proof}
 \begin{alemma}\label{asubregular}
  The subregularity of $\mathcal{H}$ in Theorem \ref{SOSC1} at $\Theta(x^*)$
  for the origin is equivalent to the existence of $\varepsilon>0$ and $\kappa>0$
  such that for all $(X,Y)\in\mathbb{B}(\Theta(x^*),\delta)$,
  \begin{equation}\label{metric-CQ}
    {\rm dist}((X,Y),\Omega)
    \le\kappa\big[{\rm dist}((X,Y),S)+{\rm dist}((X,Y),\mathbb{S}_{+}^n\times\mathbb{S}_{-}^n)\big]
  \end{equation}
  where $S:=\{(X,Y)\in\mathbb{S}^n\times\mathbb{S}^n\ |\ \langle X,Y\rangle=0\}$.
 \end{alemma}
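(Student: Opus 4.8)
The plan is to unfold the definition of metric subregularity of $\mathcal{H}$ and reduce the claimed equivalence to a comparison of two residual functions on a neighborhood of $\Theta(x^*)$. Writing $\Upsilon(X,Y):=(X;Y;\langle X,Y\rangle)$ and recalling that $\mathcal{F}^{-1}(0)=\Omega=\mathcal{H}^{-1}(0)$, the metric subregularity of $\mathcal{H}$ at $\Theta(x^*)$ for the origin (here $d=0$, so the directional neighborhood is a full ball) means exactly that there are $\kappa>0,\delta>0$ with
\[
{\rm dist}((X,Y),\Omega)\le\kappa\,{\rm dist}\big(\Upsilon(X,Y),\mathbb{S}^n_+\times\mathbb{S}^n_-\times\mathbb{R}_+\big)\qquad\forall (X,Y)\in\mathbb{B}(\Theta(x^*),\delta).
\]
Because the target set is a product, its distance splits as ${\rm dist}(\Upsilon(X,Y),\mathbb{S}^n_+\times\mathbb{S}^n_-\times\mathbb{R}_+)^2={\rm dist}((X,Y),\mathbb{S}^n_+\times\mathbb{S}^n_-)^2+\max\{0,-\langle X,Y\rangle\}^2$, so up to a factor $\sqrt2$ this residual is comparable to $D(X,Y)+\max\{0,-\langle X,Y\rangle\}$, where $D(X,Y):={\rm dist}((X,Y),\mathbb{S}^n_+\times\mathbb{S}^n_-)$. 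Thus it suffices to prove that on a neighborhood of $\Theta(x^*)$ one has $D(X,Y)+\max\{0,-\langle X,Y\rangle\}\asymp D(X,Y)+{\rm dist}((X,Y),S)$, since then the two subregularity inequalities hold with comparable moduli and are equivalent.

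For the bound ``$\lesssim$'' (which gives subregularity $\Rightarrow$ \eqref{metric-CQ}) I would use only Lipschitz continuity. Since $\Theta(x^*)\in S$, because $\langle\theta(x^*),\zeta(x^*)\rangle=0$, and since $(X,Y)\mapsto\langle X,Y\rangle$ has a local Lipschitz modulus $L$ near $\Theta(x^*)$, for every $(X',Y')\in S$ we have $|\langle X,Y\rangle|=|\langle X,Y\rangle-\langle X',Y'\rangle|\le L\|(X,Y)-(X',Y')\|$; taking the infimum over $(X',Y')\in S$ yields $\max\{0,-\langle X,Y\rangle\}\le|\langle X,Y\rangle|\le L\,{\rm dist}((X,Y),S)$. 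This step requires no regularity of $S$ and is valid at every reference point.

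For the reverse bound ``$\gtrsim$'' (which gives \eqref{metric-CQ} $\Rightarrow$ subregularity) I would control ${\rm dist}((X,Y),S)$ by $D(X,Y)+\max\{0,-\langle X,Y\rangle\}$, splitting on the sign of $\langle X,Y\rangle$. When $\langle X,Y\rangle>0$, I join $(X,Y)$ to its projection $(\overline X,\overline Y)$ onto $\mathbb{S}^n_+\times\mathbb{S}^n_-$, for which $\langle\overline X,\overline Y\rangle\le0$; the scalar $s\mapsto\langle(1-s)(X,Y)+s(\overline X,\overline Y)\rangle$ is continuous and changes sign, so the segment meets $S$ and hence ${\rm dist}((X,Y),S)\le D(X,Y)$, with no error bound needed. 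When $\langle X,Y\rangle\le0$, I invoke the classical local error bound for the zero level set of the $C^1$ function $\langle\cdot,\cdot\rangle$, namely ${\rm dist}((X,Y),S)\le C_0|\langle X,Y\rangle|=C_0\max\{0,-\langle X,Y\rangle\}$, valid near $\Theta(x^*)$ provided its gradient $(\zeta(x^*);\theta(x^*))$ is nonzero. Combining the two cases gives ${\rm dist}((X,Y),S)\le\max\{C_0,1\}\,[D(X,Y)+\max\{0,-\langle X,Y\rangle\}]$, closing the comparison.

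The \emph{main obstacle} is precisely the error bound in the case $\langle X,Y\rangle\le0$: this is the step where the one-sided residual $\max\{0,-\langle X,Y\rangle\}$ coming from the reformulation must dominate the two-sided distance ${\rm dist}((X,Y),S)$, and it degenerates exactly when $\nabla\langle\cdot,\cdot\rangle(\Theta(x^*))=(\zeta(x^*);\theta(x^*))=0$, i.e. when $\theta(x^*)=\zeta(x^*)=0$. At such a point the defect $-\langle X,Y\rangle$ is second order along directions like $(tI,-tI)$, whereas ${\rm dist}((X,Y),S)$ and ${\rm dist}((X,Y),\Omega)$ are first order, so the equivalence is to be read under the nondegeneracy $(\theta(x^*),\zeta(x^*))\neq(0,0)$; away from this single degenerate point the two error-bound constants combine with the Lipschitz constant to deliver the asserted equivalence.
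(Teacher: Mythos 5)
Your argument is sound and essentially complete at every reference point with $(\theta(x^*),\zeta(x^*))\neq(0,0)$: the splitting of the product distance, the Lipschitz bound $\max\{0,-\langle X,Y\rangle\}\le L\,{\rm dist}((X,Y),S)$, and the intermediate-value/segment argument handling $\langle X,Y\rangle>0$ are all correct, and the classical error bound for the level set of $\langle\cdot,\cdot\rangle$ closes the nondegenerate case. This is a genuinely different route from the paper, which never compares residuals pointwise: the paper writes $\mathcal{H}^{-1}(U,V,W)=\mathcal{G}(U,V)\cap\mathcal{R}(W)$ and invokes the Klatte--Kummer calmness theorem for intersection maps \cite[Theorem 3.6]{Kummer02}, using calmness of $\mathcal{G}$ and the Aubin property of $\mathcal{G}^{-1}$ (which hold at \emph{every} reference point) to pass between subregularity of $\mathcal{H}$ and calmness of the partial map $(U,V)\rightrightarrows\mathcal{R}(0)\cap\mathcal{G}(U,V)$, the latter being \eqref{metric-CQ} up to a projection-onto-$S$ plus triangle-inequality step. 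The paper's route is uniform in the reference point; yours is more elementary and gives explicit constants where it applies.

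The genuine gap is the degenerate point $\theta(x^*)=\zeta(x^*)=0$, i.e.\ $\Theta(x^*)=(0,0)\in\Omega$, which the lemma as stated must cover. There your case $\langle X,Y\rangle\le 0$ collapses, as you note, but your conclusion that ``the equivalence is to be read under the nondegeneracy'' does not follow: what fails at $(0,0)$ is your \emph{sufficient} two-sided comparison of residuals, not necessarily the equivalence of the two error bounds, since both may fail simultaneously, leaving the equivalence vacuously true. Indeed, for $n\ge 2$ both do fail at the origin: along $(X_t,Y_t)=({\rm Diag}(t,\sqrt{t},0,\ldots,0),\,{\rm Diag}(-t,t^{3/2},0,\ldots,0))$ one has $\langle X_t,Y_t\rangle=0$, hence ${\rm dist}((X_t,Y_t),S)=0$ and $\max\{0,-\langle X_t,Y_t\rangle\}=0$, while ${\rm dist}((X_t,Y_t),\mathbb{S}^n_+\times\mathbb{S}^n_-)=t^{3/2}$ and ${\rm dist}((X_t,Y_t),\Omega)$ is of order $t$ (both matrices charge $e_1$ with weights $\pm t$, removing this overlap costs order $t$, and tilting the ranges is more expensive because of the $\sqrt{t}$ eigenvalue); so both subregularity of $\mathcal{H}$ and \eqref{metric-CQ} are violated and the equivalence survives without your hypothesis. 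Conversely, in the scalar case $n=1$ the equivalence genuinely fails at the origin of $\mathbb{R}^2$: \eqref{metric-CQ} holds there with a uniform constant (check the four quadrants), while subregularity of $\mathcal{H}$ fails along $(t,-t)$, whose residual is $t^2$ against a distance $t$ to $\Omega$. So the degenerate point requires a separate argument in either direction --- your proof, as written, neither covers it nor correctly diagnoses what happens there, whereas the paper's calmness-of-intersections argument is designed to bypass the pointwise residual comparison on which your approach founders.
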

 \begin{proof}
 Define the multifunctions
 $\mathcal{G}\!:\mathbb{S}^n\times\mathbb{S}^n\to\mathbb{S}^n\times\mathbb{S}^n$
 and $\mathcal{R}\!:\mathbb{S}^n\times\mathbb{S}^n\to\mathbb{R}$ by
 \begin{subequations}
  \begin{align}
   \mathcal{G}(U,V)&:=\left\{(X,Y)\in\mathbb{S}^n\times\mathbb{S}^n\ |\
   \left(\begin{matrix}
       X-U\\
       Y-V
   \end{matrix}\right)\in
   \left(\begin{matrix}
       \mathbb{S}_{+}^n\\
       \mathbb{S}_{-}^n
   \end{matrix}\right)\right\},\\
   \mathcal{R}(W)&:=\left\{(X,Y)\in\mathbb{S}^n\times\mathbb{S}^n\ |\ \langle X,Y\rangle-W\in\mathbb{R}_{+}\right\}.
   \end{align}
 \end{subequations}
 Clearly, $\mathcal{H}^{-1}(U,V,W)=\mathcal{G}(U,V)\cap\mathcal{R}(W)$.
 Write $(X^*,Y^*)=\Theta(x^*)\in\Omega$. Clearly,
 the multifunction $\mathcal{G}$ is calm at $(0,0,X^*,Y^*)$.
 In addition, by invoking Mordukhovich's coderivative rule \cite{Mordu92},
 we can check that $\mathcal{G}^{-1}$ has the Aubin property
 at $(X^*,Y^*)$ for the origin. Now from \cite[Theorem 3.6]{Kummer02}
 it follows that the multifunction $\mathcal{H}^{-1}$ is calm
 at the origin for $(X^*,Y^*)$, or equivalently, the multifunction
 $\mathcal{H}$ is subregular at $(X^*,Y^*)$ for the origin,
 whenever $(U,V)\rightrightarrows \mathcal{R}(0)\cap\mathcal{G}(U,V)$
 is calm at the origin for $(X^*,Y^*)$, i.e.,
 \[
   \mathcal{E}(X,Y):=\left\{(X,Y)\in S\ |\
   \left(\begin{matrix}
       X-U\\
       Y-V
   \end{matrix}\right)\in
   \left(\begin{matrix}
       \mathbb{S}_{+}^n\\
       \mathbb{S}_{-}^n
   \end{matrix}\right)\right\}
 \]
 is subregular at $(X^*,Y^*)$ for the origin. This, by the definition of
 the subregularity, is equivalent to the condition in \eqref{metric-CQ}.
 The proof is completed.
 \end{proof}


\begin{thebibliography}{1}

  \bibitem{BiPan17}
  {\sc S. J.\ Bi and S. H.\ Pan},
  {\em Multi-stage convex relaxation approach to rank regularized minimization problems
  based on equivalent MPGCCs}, SIAM Journal on Control and Optimization, 55(2017): 2493-2518.

   \bibitem{BonnansCS99}
   {\sc J. F.\ Bonnans, R.\ Cominetti and A.\ Shapiro},
   {\em Second-order optimality conditions based on parabolic second-order tangent sets},
   SIAM Journal on Optimization, 9(1999): 466-492.

   \bibitem{BonnansR05}
   {\sc J. F.\ Bonnans,  and H.\ Ram\'{\i}rez C.},
   {\em Perturbation analysis of second-order cone programming problems},
   Mathematical Programming, 104(2005): 205-227.

  \bibitem{BS00}
  {\sc J. F.\ Bonnans, and A.\ Shapiro,}
  {\em Perturbation Analysis of Optimization Problems},
  Springer, New York. 2000.


  \bibitem{ChenYe19}
  {\sc J. S.\ Chen, J. J.\ Ye, J.\ Zhang and J. C.\ Zhou},
  {\em Exact formula for the second-order tangent set of the second-order cone complementarity set},
  arXiv:1906.099/6v1, 2019.


  \bibitem{Constantin06}
  {\sc E.\ Constantin},
  {\em Second-order necessary conditions based on second-order tangent cones},
   Mathematical Sciences Research Journal, 10(2006): 42-56.





  \bibitem{DingSY14}
  {\sc C.\ Ding, D. F.\ Sun and J. J.\ Ye},
  {\em First order optimality conditions for mathematical programs with semidefinite cone complementarity},
  Mathematical Programming, Ser. A, 147(2014): 539-579.



%


%

  \bibitem{Gfrerer11}
  {\sc H.\ Gfrerer},
  {\em First-order and second-order characterizations of metric subregularity and
  calmness of constraint set mappings},
  SIAM Journal on Optimization, 21(2011): 1439-1474.

  \bibitem{Gfrerer13}
  {\sc H.\ Gfrerer},
  {\em On directional metric regularity, subregularity and optimality conditions for nonsmooth mathematical programs},
  Set-Valued and Variational Analysis, 21(2013): 151-176.
  
  \bibitem{Gfrerer14}
  {\sc H.\ Gfrerer},
  {\em Optimality conditions for disjunctive programs based on generalized differentiation with application to mathematical programs with equilibrium constraints},
  SIAM Journal on Optimization, 24(2014): 898-931.


  \bibitem{Gfrerer16}
  {\sc H.\ Gfrerer and J. V.\ Outrata},
  {\em On computation of generalized derivative of the normal-cone mapping and their applications},
  Mathematics of Operations Research, 41(2016): 1535-1556.

  \bibitem{Gfrerer162}
  {\sc H.\ Gfrerer and J. V.\ Outrata},
  {\em On Lipschitzian properties of implicit multifunctions},
  SIAM Journal on Optimization, 26(2016): 2160-2189.

  \bibitem{Gfrerer19}
  {\sc H.\ Gfrerer, J. J.\ Ye and J. C.\ Zhou},
  {\em Second-order optimality conditions for non-convex set-constrained optimization problems},
  arXiv:1911.04076v1, 2019.


  \bibitem{Ginchev11}
  {\sc I.\ Ginchev  and B. S.\ Mordukhovich},
  {\em On directionally dependent subdifferentials},
  Comptes rendus de I'Acad\'{e}mie bulgare des sciences: sciences math\'{e}matiques et naturelles, 64(2011): 497-508.


  \bibitem{LuoYe13}
  {\sc L.\ Guo, G. H. \ Lin and J. J. \ Ye},
  {\em Second-order optimality conditions for mathematical programs with equilibrium constraint},
  Jounal of Optimzation Theory and Applications, 158(2013): 33-64.



  \bibitem{Henrion05}
  {\sc H.\ Henrion and J. V.\ Outrata},
  {\em Calmness of constraint systems with applications},
   Mathematical programming, 104 (2005): 437-464.

  \bibitem{Ioffe08}
  {\sc A. D.\ Ioffe and J. V.\ Outrata},
  {\em On metric and calmness qualification conditions in subdifferential calculus},
  Set-Valued and Variational Analysis, 16(2008): 199-227.
  
  \bibitem{Kummer02}
  {\sc D.\ Klatte and B. \ Kummer},
  {\em Contrained minima and lipschitzian penalties in metric spaces},
  SIAM Journal on Optimization, 13(2002): 619-633.
  
  \bibitem{Liang14}
  {\sc Y. C.\ Liang, X. D. Zhu and G. H.\ Lin},
  {\em Necessary optimality conditions for mathematical programs with second-order cone complementarity constraints},
  Set-Valued and Variational Analysis, 22(2014): 59-78.




  \bibitem{LiuBiPan18}
  { Y. L. \ Liu, S. J. \ Bi and  S. H.\  Pan},
  {\em Equivalent Lipschitz surrogates for zero-norm and rank optimization problems},
  Journal of Global Optimization, 72(2018): 679-704.
  
  \bibitem{Luo96}
  {\sc Z. Q.\ Luo, J. S. \ Pang and D. Ralph},
  {\em Mathematical Programs with Equilibrium Constraints}, Cambridge University Press, 1996.

  \bibitem{Mohammadi19}
  {\sc A. \ Mohammadi, B. S.\ Mordukhovich and M. E. \ Sarabi},
  {\em Parabolic regularity in geometric variational analysis},
  arXiv:1909.00241v1.


  \bibitem{Mordu92}
  {\sc B. S.\ Mordukhovich,}
  {\em Sensitivity analysis in nonsmooth optimization},
  Theoretical Aspects of Industrial Design(D. A. Field and V. Kmokov,eds.) SIAM Philadelphia, 58(1992): 32-46.

  \bibitem{Mordu06}
  {\sc B. S.\ Mordukhovich,}
  {\em Variational Analysis and Generalized Differentiation I},
  Springer, New York. 2006.



%
%
%
%

  \bibitem{Roc70}
  {\sc R. T.\ Rockafellar},
  {\em Convex Analysis}, Princeton University Press, 1970.

 \bibitem{RW98}
 {\sc R. T.\ Rockafellar and R. J-B.\ Wets},
 {\em Variational Analysis}, Springer, 1998.



  \bibitem{Scheel00}
  {\sc H. S.\ Scheel and S. Scholtes},
  {\em Mathematical programs with complementarity constraints: stationarity, optimality, and sensitivity},
  Mathematics of Operations Research, 25(2000): 1-22.

%

   \bibitem{WuZZ14}
  {\sc J.\ Wu, L. W.\ Zhang and Y.\ Zhang},
  {\em Mathematical programs with semidefinite cone complementarity constraints: constraint qualifications and optimality conditions}, Set-Valued and Variational Analysis,  22(2014): 155-187.
  
  \bibitem{YeZhou16}
 {\sc J. J. \ Ye and J. C. \ Zhou},
 {\em  First order optimality conditions for mathematical programs with second-order cone complementarity constraints},
 SIAM Journal on Optimization,  26(2016): 2820-2846.
 
 
  \bibitem{YeZhou18}
 {\sc J. J. \ Ye and J. C. \ Zhou},
 {\em  Verifiable sufficient conditions for error bound property of second-order cone complementarity problems},
 Mathematical Programming,  171(2018): 361-395.


  \bibitem{ZhangZX13}
  {\sc  L. W.\ Zhang, N.\ Zhang and X. T.\ Xiao},
  {\em On the second-order directional derivatives of singular values of matrices and symmetric matrix-valued functions}, Set-Valued and Variational Analysis, 21(2013): 557-586.
  
  \bibitem{ZhangWZ15}
  {\sc  Y.\ Zhang, J. \ Wu and L. W. \ Zhang},
  {\em First order necessary optimality conditions for mathematical programs with second-order cone complementarity constraints}, Jounral of Global Optimization, 63(2015): 253-279.


 

 
\end{thebibliography}
 \end{document}